\documentclass[12pt,reqno]{amsart}
\usepackage{amssymb}
\usepackage{amsthm}
\usepackage{bm}
\usepackage{latexsym}
\usepackage{amsmath}
\usepackage{eufrak}
\usepackage{mathrsfs}
\usepackage{caption}
\usepackage{amscd}
\usepackage[all,cmtip]{xy}
\usepackage[usenames]{color}
\usepackage{graphicx}
\usepackage{color}
\usepackage{amscd}
\usepackage{float}
\usepackage{graphics}
\usepackage{tikz}
\usepackage{tikz-cd}
\usepackage{comment}
\usepackage{xspace}
\usepackage{mathtools}

\usepackage{amssymb}
\usepackage{amsthm}
\usepackage{graphicx}
\usepackage{subfig}
\usepackage{enumerate}
\usepackage{hyperref}

\usetikzlibrary{patterns,decorations.pathreplacing}
\usepackage{marginnote}


\theoremstyle{plain}


\newtheorem{theorem}{Theorem}[section]
\newtheorem{proposition}[theorem]{Proposition}
\newtheorem{lemma}[theorem]{Lemma}
\newtheorem{corollary}[theorem]{Corollary}
\newtheorem{conjecture}[theorem]{Conjecture}

\newtheorem{question}[theorem]{Question}

\theoremstyle{definition}

\newcommand{\appsection}[1]{\let\oldthesection\thesection
\renewcommand{\thesection}{Appendix \oldthesection}
\section{#1}\let\thesection\oldthesection}

\newtheorem{definition}[theorem]{Definition}

\theoremstyle{remark}

\newtheorem{example}[theorem]{Example}

\def\D{{\mathbb{D}}}

\def\Z{{\mathbb{Z}}}

\def\Q{{\mathbb{Q}}}

\def\P{{\mathbb{P}}}

\def\O{{\mathcal{O}}}

\def\X{{\mathcal{X}}}

\pagestyle{plain}

\begin{document}
\title{On wormholes in the moduli space of surfaces}
\author[Giancarlo Urz\'ua]{Giancarlo Urz\'ua}
\email{urzua@mat.uc.cl}
\address{Facultad de Matem\'aticas, Pontificia Universidad Cat\'olica de Chile, Campus San Joaqu\'in, Avenida Vicu\~na Mackenna 4860, Santiago, Chile.}

\author[Nicol\'as Vilches]{Nicol\'as Vilches}
\email{nivilches@math.columbia.edu}
\address{Department of Mathematics, Columbia University, 2990 Broadway, New York, NY 10027, USA.}


\begin{abstract} 
We study a certain wormholing phenomenon that takes place in the Koll\'ar--Shepherd-Barron--Alexeev (KSBA) compactification of the moduli space of surfaces of general type. It occurs because of the appearance of particular extremal P-resolutions in surfaces on the KSBA boundary. We state a general wormhole conjecture, and we prove it for a wide range of cases. At the end, we discuss some topological properties and open questions. 
\end{abstract}

\subjclass[2010]{14J10, 14E30, 14J29, 14J17}

\keywords{Moduli space of surfaces of general type, KSBA compactification, Minimal model program, Wahl singularity, continued fractions}

\maketitle

\tableofcontents

\section{Introduction} \label{s1}

Since the breakthrough construction of simply connected Campedelli surfaces by Y. Lee and J. Park in \cite{LP07}, there have been several results on various aspects of one parameter $\Q$-Gorenstein degenerations of surfaces, see e.g. \cite{PPS09a}, \cite{PPS09b}, \cite{LN13}, \cite{SU16}, \cite{HTU13}, \cite{U16a}, \cite{U16b}, \cite{RTU17}, \cite{LN18}, \cite{PPSU18}, \cite{CU18}, \cite{EU18}. One of those aspects has been the study of KSBA surfaces with only Wahl singularities which admit $\Q$-Gorenstein smoothings into surfaces of general type. These smoothings could be seen as punctured disks $\D^{\times}$ on the moduli space of surfaces of general type $M_{K^2,\chi}$, which are completed in the KSBA compactification $\overline{M}_{K^2,\chi}$ with a normal projective surface $X$ with only Wahl singularities and $K_X$ ample. (Here of course $K_X^2=K^2$ and $\chi(\O_X)=\chi$.) In this way, we have a $\Q$-Gorenstein smoothing $$(X \subset \X) \to (0 \in \D),$$ where $\D=\overline{\D^{\times}}$. Nowadays there are many examples of such situations in the literature, most of them constructed abstractly, starting with the original work \cite{LP07}.

P-resolutions were introduced by Koll\'ar--Shepherd-Barron to classify deformations of quotient singularities \cite[Section 3]{KSB88}. The smallest ones over cyclic quotient singularities, which are called extremal P-resolutions (see Definition \ref{extremalP}), play a key role for us in the following sense. Sometimes a surface $X$ as above has an embedded extremal P-resolution, which in addition admits another extremal P-resolution over the same cyclic quotient singularity. One performs the corresponding ``extremal P-resolution surgery" on $X$ to obtain another normal projective surface $X'$ with only Wahl singularities. Let us assume that $X'$ admits a $\Q$-Gorenstein smoothing $(X' \subset \X') \to (0 \in \D)$. (This automatically holds under a cohomological condition on $X$, which is used in all Lee--Park type of surfaces.) If $K_{X'}$ is ample, then one can easily show that $X$ and $X'$ live in the same $\overline{M}_{K^2,\chi}$. If $K_{X'}$ is only nef, then the canonical model of $X'$ and $X$ belong to the same $\overline{M}_{K^2,\chi}$ as well. But if $K_{X'}$ is not nef, then one needs to run the minimal model program (MMP) on the $3$-fold family $(X' \subset \X') \to (0 \in \D)$ to find the \textit{KSBA replacement} (in case that the smooth fiber is of general type), this is, the canonical model of a new family $(X'' \subset \X'') \to (0 \in \D)$ such that $K_{X''}$ is nef and $K_{X''}^2>0$. This MMP requires flips and/or divisorial contractions as studied in \cite{HTU13} (see also \cite{U16b}). If $(X' \subset \X') \to (0 \in \D)$ has a minimal model (i.e. canonical class becomes nef) and MMP only requires flips, then the KSBA replacement is again on the same $\overline{M}_{K^2,\chi}$.

\begin{conjecture}[Wormhole conjecture] The MMP requires only flips and gives a minimal model. The KSBA replacement of $(X' \subset \X') \to (0 \in \D)$ lives on the same moduli space as the original $(X \subset \X) \to (0 \in \D)$.
\end{conjecture}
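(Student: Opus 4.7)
The plan is to run the 3-fold MMP on $\X'$ relative to the disk $\D$, to recognize each step as a flip of a type appearing in the classification of \cite{HTU13}, and to verify that the resulting minimal model carries the invariants $(K^2,\chi)$ of the original family.

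First, I would realize the wormhole surgery $\X \dashrightarrow \X'$ itself as a single 3-fold flip. Both families contract to a common Q-Gorenstein deformation $(Y \subset \Y) \to \D$ of the underlying cyclic quotient singularity, with $\X \to \Y$ and $\X' \to \Y$ providing the two extremal P-resolutions in the central fiber. Relative to $\Y$, one side of the map is $K$-positive on the exceptional locus and the other is $K$-negative, and the classification of extremal neighborhoods with Wahl singularities identifies this rational map as a flip of the expected local analytic type. In particular the surgery is small: no divisor is contracted, and the general fiber is unaffected by it.

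Next, I would analyze the cone of $K_{\X'}$-negative extremal rays over $\D$. Since $X$ and $X'$ agree away from the surgery locus and $K_X$ is ample by hypothesis, any new $K_{\X'}$-negative curve on $X'$ must meet the exceptional chain of the new extremal P-resolution. Using the explicit continued-fraction description of extremal P-resolutions together with adjunction and the cohomological Q-Gorenstein smoothability assumption, each such curve can be classified: generically it is a smooth rational curve through at most two Wahl points whose contraction is a further Wahl-type flip in the sense of \cite{HTU13}, and the MMP proceeds through finitely many such flips. The main obstacle is to rule out divisorial extremal rays, namely contractions that would kill an entire irreducible component of the central fiber; these are the only operations that could move us out of $\overline{M}_{K^2,\chi}$.

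To handle this obstacle I propose an induction on a complexity invariant such as the sum of the indices of the Wahl singularities of $X'$, bounded in terms of $K_{X'}^2$ and $\chi(\O_{X'})$. The rigidity of extremal P-resolutions (they admit at most two Wahl points with a prescribed continued-fraction shape) constrains the possible $K_{\X'}$-negative rays and, in a wide range of configurations, forces each of them to be of Wahl type, giving a flip that strictly decreases the complexity. Exceptional arithmetic cases where a divisorial contraction could a priori appear can be enumerated and, in the range covered by the theorem, recognized either as already yielding $K_{\X'}$ nef or as fitting themselves into another extremal P-resolution picture, reducing again to a flip. Once termination through flips only is established, the resulting $\X''$ has $K_{\X''}$ nef and big; its canonical model shares $K^2$ and $\chi$ with $X$ since flips are isomorphisms in codimension one confined to the central fiber and both the wormhole surgery and Q-Gorenstein smoothings preserve $(K^2,\chi)$. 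Thus the KSBA replacement lands in the same $\overline{M}_{K^2,\chi}$ as $X$.
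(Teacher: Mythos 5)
The statement you are trying to prove is stated in the paper as a \emph{conjecture}, and the paper does not prove it in general: it only establishes it under restrictive hypotheses (Theorems \ref{W1W} and \ref{2W}, combined in Corollary \ref{PartialWC}), namely for nonrational surfaces where the second extremal P-resolution has a $(-1)$-curve in the minimal resolution, or a $(-2)$-curve with a single Wahl singularity. Section \ref{s6} explicitly lists the remaining open cases (Wahl-$m$-Wahl versus Wahl-$m$-Wahl for $m\geq 2$, $m$-Wahl versus Wahl-$(m-1)$-Wahl for $m\geq 3$, and Wahl-$m$ versus $m$-Wahl for $m\geq 3$), and Example \ref{counterex} exhibits combinatorial configurations in the last case where a $(-1)$-curve meeting the first exceptional curve with multiplicity $\geq 2$ forces a \emph{divisorial contraction} on the deformation of $X_2$. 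If any of those configurations is realizable on an actual surface, the conjecture as stated is false. So a proof at the level of generality you attempt cannot succeed by the soft arguments you describe; the step ``rule out divisorial extremal rays'' is precisely the open problem, not a technicality.

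Beyond that, two concrete errors. First, your opening move --- realizing the wormhole surgery $\X \dashrightarrow \X'$ as a single 3-fold flip because ``one side is $K$-positive and the other $K$-negative over $\Y$'' --- contradicts Definition \ref{extremalP}: an extremal P-resolution has $K$ \emph{relatively ample} by definition, so both $K_{X}\cdot C>0$ and $K_{X'}\cdot C'>0$. The surgery replaces one $K$-positive small extraction by another; it is not an MMP step, which is exactly why it can move you to a non-nef model and why the conjecture is nontrivial. Second, your classification claim that each $K_{\X'}$-negative curve ``generically'' yields a Wahl-type flip ignores that whether an mk1A/mk2A extremal neighborhood flips or contracts a divisor is decided by Mori's numerical algorithm, and the paper's actual work (Lemmas \ref{cotadiscrepancias} through \ref{notipoMBB} and \ref{controlcurva2Wahl} through \ref{prelemacompartido}) consists of sharp discrepancy bounds and non-ruledness arguments to pin down the very few curve configurations that can occur and to show they flip. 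Your proposed induction on the sum of Wahl indices does not appear in the paper and, absent an argument that each flip strictly decreases it, does not by itself give termination or exclude divisorial rays.
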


It is not clear if the smooth surfaces in the wormhole are deformation equivalent, i.e. they belong to the same connected component of $M_{K^2,\chi}$. For example, M. Reid conjectures that there is one component for torsion free Godeaux surfaces, and we do have wormholes there by means of Lee--Park type of examples, which we do not know how to connect. On the other hand, wormholes applied to elliptic surfaces may change the topology. We show examples of that in Section \ref{s6}. 

In this paper, we prove the wormhole conjecture for a wide range of cases. We point out that a fixed extremal P-resolution in $X$ can produce at most one wormhole, because in \cite[Section 4]{HTU13} it is proved that a cyclic quotient singularity can admit at most two extremal P-resolutions. Additionally, when that happens, both share the same $\delta$ invariant, this is, for these two surfaces we have that the intersection of the exceptional curve with the canonical class times the indices of the singularities is the same. In this paper, we give simplified and new proofs of both of these facts.

We now state the main theorems, which will imply positive evidence for the wormhole conjecture as a corollary. For all the definitions we refer to Section \ref{s2} and Section \ref{s3}.

\begin{theorem}
Let $Y$ be a nonrational normal projective surface with one cyclic quotient singularity $(Q \in Y)$, which is smooth everywhere else. Assume that $Q$ admits two extremal P-resolutions $f_i^+ \colon (C_i \subseteq X_i) \to (Q \in Y), i=1, 2$, so that the following is satisfied:

\begin{itemize}
\item The strict transform in the minimal resolution of $X_2$ of the exceptional curve $C_2$ for the extremal P-resolution in $X_2$ is a $\P^1$ with self-intersection $-1$. 
\item The canonical class $K_{X_1}$ is nef.
\item Both surfaces $X_i$ admit $\Q$-Gorenstein smoothings $(X_i \subseteq \X_i) \to (0 \in \D)$.
\end{itemize}
Then, we have that $K_{X_2}$ is nef. 
\label{W1W}
\end{theorem}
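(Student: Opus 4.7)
To show $K_{X_2}$ is nef I need $K_{X_2}\cdot\Gamma\geq 0$ for every irreducible curve $\Gamma\subset X_2$. If $\Gamma=C_2$, this is immediate from the definition of an extremal P-resolution, which forces $K_{X_2}\cdot C_2>0$. For $\Gamma\neq C_2$ the pushforward $\Gamma_Y:=(f_2^+)_*\Gamma$ is an irreducible curve on $Y$, and because $X_1$ and $X_2$ both agree with $Y$ outside their exceptional fibers over $Q$, the same $\Gamma_Y$ has a well-defined proper transform $\Gamma_1\subset X_1$. The plan is to compare $K_{X_2}\cdot\Gamma$ with $K_{X_1}\cdot\Gamma_1$ and deduce non-negativity of the former from the nefness of $K_{X_1}$.

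Write $K_{X_i}=(f_i^+)^*K_Y+a_iC_i$, so that $a_i=(K_{X_i}\cdot C_i)/C_i^2<0$ since $K_{X_i}\cdot C_i>0$ and $C_i^2<0$. Applying the projection formula,
\[
K_{X_i}\cdot\Gamma_i \;=\; K_Y\cdot\Gamma_Y \,+\, a_i\,(C_i\cdot\Gamma_i), \qquad i=1,2.
\]
Since $K_{X_1}$ is nef, $K_Y\cdot\Gamma_Y\geq|a_1|\,(C_1\cdot\Gamma_1)$. Hence to obtain $K_{X_2}\cdot\Gamma\geq 0$ it suffices to prove the local inequality
\[
|a_1|\,(C_1\cdot\Gamma_1) \;\geq\; |a_2|\,(C_2\cdot\Gamma),
\]
an assertion depending only on data at $(Q\in Y)$. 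When $\Gamma_Y$ avoids $Q$ both sides vanish; so the nontrivial case is when $\Gamma_Y$ passes through $Q$.

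For those curves, the hypothesis $\tilde C_2^2=-1$ is the decisive input: combined with the equality $\delta_1=\delta_2$ of the common $\delta$-invariant recalled in the introduction (which ties $K_{X_i}\cdot C_i$ on the two sides to the indices of the Wahl singularities on $C_i$), it yields closed-form expressions for $|a_2|$ and $C_2\cdot\Gamma$ in terms of the Hirzebruch--Jung continued fraction of $(Q\in Y)$ and the local branches of $\Gamma_Y$ at $Q$. Parallel expressions on the $X_1$ side come from the general description of an extremal P-resolution, and the local inequality should then reduce to an elementary continued-fraction identity.

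The main obstacle is precisely this combinatorial last step: one must carefully match the branches of $\Gamma_Y$ at $Q$ with the exceptional configurations of the two extremal P-resolutions, and verify that the numerical pay-off from $\tilde C_2^2=-1$ dominates the more general contribution on the $X_1$ side. I expect this to be handled by a case analysis along the classification of extremal P-resolutions in \cite[Section 4]{HTU13}, rather than by any single conceptual shortcut.
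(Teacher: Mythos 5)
Your reduction to the ``local inequality'' $|a_1|\,(C_1\cdot\Gamma_1)\geq|a_2|\,(C_2\cdot\Gamma)$ is where the argument breaks: as a statement about all curve germs at $(Q\in Y)$ it is false, so the strategy collapses before the combinatorial step you defer is even reached. Concretely, take $(Q\in Y)=\frac{1}{36}(1,13)$, with $\frac{36}{13}=[3,5,2,2]$ and the two extremal P-resolutions $[3,5,2]-2$ (this is $X_1$) and $[4]-1-[6,2,2]$ (this is $X_2$, satisfying $\widetilde{C_2}^2=-1$); this is the very singularity the paper uses to illustrate its notation. One computes $K_{X_1}\cdot C_1=2/5$ and $C_1^2=-36/25$, so $a_1=-5/18$, while $K_{X_2}\cdot C_2=1/4$ and $C_2^2=-9/16$, so $a_2=-4/9$. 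Now let $\Gamma_Y$ be a curve through $Q$ whose strict transform in the minimal resolution of $Y$ meets the $(-3)$-curve of the chain $[3,5,2,2]$ transversally at a general point. On the $X_1$ side that component is the far end of the Wahl chain $[2,5,3]$ and its coefficient in the Mumford pullback of $C_1$ is $1/25$, so $C_1\cdot\Gamma_1=1/25$; on the $X_2$ side the same component is the $[4]$ Wahl chain and $C_2\cdot\Gamma=1/4$. Hence $|a_1|(C_1\cdot\Gamma_1)=1/90$ while $|a_2|(C_2\cdot\Gamma)=1/9$: your inequality fails by a factor of ten. (For such a $\Gamma$ one still has $K_{X_2}\cdot\Gamma\geq0$ in the end, but only because a $(-1)$-curve meeting that component alone would already violate the nefness of $K_{X_1}$ --- a global fact that no purely local comparison at $Q$ can detect.)

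The underlying problem is that your scheme discards exactly the hypotheses that carry the proof. The paper never compares $K_{X_2}\cdot\Gamma$ with $K_{X_1}\cdot\Gamma_1$ curve by curve. Instead it uses the $\Q$-Gorenstein smoothing of $X_2$ to show that if $K_{X_2}$ is not nef, there is a $K_{X_2}$-negative curve $\Gamma$ defining an extremal neighborhood of type mk1A or mk2A, hence a $(-1)$-curve in the minimal resolution meeting the Wahl chains only at their end components, transversally at one point each; combining this with $K_{X_1}$ nef and with the non-ruledness of the surfaces leaves the single configuration of Figure \ref{curvagammaenres}, which is then excluded by Proposition \ref{propcompartida} via the discrepancy bounds of Lemma \ref{cotadiscrepancias} (the types $M$ and $B$) together with blow-down arguments. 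Your proposal uses neither the smoothing hypothesis nor non-ruledness in any essential way, and it leaves its one substantive step --- the ``elementary continued-fraction identity'' --- entirely unverified; so even where the local inequality happens to hold, what you have is a sketch rather than a proof.
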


\begin{theorem}
Let $Y$ be a nonrational normal projective surface with one cyclic quotient singularity $(Q \in Y)$, which is smooth everywhere else. Assume that $Q$ admits two extremal P-resolutions $f_i^+ \colon (C_i \subseteq X_i) \to (Q \in Y), i=1, 2$, so that the following is satisfied:

\begin{itemize}
\item The strict transform in the minimal resolution of $X_2$ of the exceptional curve $C_2$ for the extremal P-resolution in $X_2$ is a $\P^1$ with self-intersection $-2$. 
\item The extremal P-resolution in $X_2$ has only one singularity.
\item The canonical class $K_{X_1}$ is nef.
\item Both surfaces $X_i$ admit $\Q$-Gorenstein smoothings $(X_i \subseteq \X_i) \to (0 \in \D)$.
\end{itemize}
Then, we only need flips to run MMP on $(X_2 \subset \X_2) \to (0 \in \D)$.
\label{2W}
\end{theorem}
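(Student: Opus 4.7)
The plan is to show that every $K_{\X_2}$-negative extremal ray over $\D$ gives rise to a flipping contraction, by combining the classification of extremal neighborhoods of \cite{HTU13} with the nefness of $K_{X_1}$ and the restrictive local structure of the extremal P-resolution on $X_2$.

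The first step is to verify that $C_2$ itself is $K_{X_2}$-positive, so that it lies outside the $K$-negative cone. Let $\pi\colon \widetilde X_2 \to X_2$ be the minimal resolution. By adjunction on the $(-2)$-curve $\widetilde C_2\simeq\P^1$, $K_{\widetilde X_2}\cdot \widetilde C_2=0$. Since the P-resolution of $X_2$ has exactly one singularity, $\widetilde C_2$ meets the Wahl exceptional chain at a single point with multiplicity one, and the discrepancy formula $\pi^*K_{X_2}=K_{\widetilde X_2}-\sum a_j E_j$ with $a_j\in(-1,0)$ yields $K_{X_2}\cdot C_2 = -a \in (0,1)$ for a single discrepancy $a$.

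Consequently, any curve $E\subset X_2$ with $K_{X_2}\cdot E<0$ differs from $C_2$, so its image $\overline E := f_2^+(E)\subset Y$ is a curve, whose strict transform $E_1\subset X_1$ under $f_1^+$ satisfies $K_{X_1}\cdot E_1\ge 0$ by hypothesis. Writing $K_{X_i}=f_i^{+*}K_Y+\alpha_i\, C_i$ with $\alpha_i := (K_{X_i}\cdot C_i)/C_i^2$, one obtains
\begin{equation*}
K_{X_2}\cdot E - K_{X_1}\cdot E_1 \;=\; \alpha_2\,(C_2\cdot E) - \alpha_1\,(C_1\cdot E_1),
\end{equation*}
and the shared $\delta$-invariant of the two extremal P-resolutions pins down $\alpha_1$ against $\alpha_2$ in terms of the Wahl indices, allowing one to transfer numerical information between $X_1$ and $X_2$.

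The final step, which I expect to be the main obstacle, is a case analysis built on \cite[Section 4]{HTU13}: every $K_{X_2}$-negative extremal neighborhood over $\D$ containing $E$ is of a prescribed type, either flipping (small) or divisorial, classified by the local numerical data of $E$ relative to the Wahl chain. For each potentially divisorial type, the numerical constraints translate, via the identity above and the $\delta$-equality, into forced inequalities on $E_1$ that violate $K_{X_1}\cdot E_1\ge 0$. The $(-2)$-hypothesis on $\widetilde C_2$ and the single-singularity assumption on the P-resolution of $X_2$ are essential here: they cut down the list of divisorial configurations to cases where the wormhole transfer back to $X_1$ is sharp enough to produce the contradiction. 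Once each divisorial case is excluded, only flipping contractions remain, and the MMP on $(X_2\subset \X_2)\to(0\in\D)$ proceeds by flips alone.
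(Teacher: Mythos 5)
Your opening step is correct: $K_{\widetilde X_2}\cdot\widetilde C_2=0$ by adjunction, $\widetilde C_2$ meets the single Wahl chain once, and the discrepancies in $(-1,0)$ give $K_{X_2}\cdot C_2>0$; the transfer identity through $Y$ via $K_{X_i}=f_i^{+*}K_Y+\alpha_i C_i$ is also sound and is close in spirit to how the paper constrains where a $K_{X_2}$-negative curve can sit. However, you have misidentified the mechanism that excludes divisorial contractions, which is the actual point of the theorem. In this MMP the dichotomy flip/divisorial is decided by the local data of the extremal neighborhood: a divisorial contraction of type mk1A/mk2A produces a Wahl singularity, whereas a flip produces a non-Wahl cyclic quotient singularity admitting an extremal P-resolution. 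The paper's Lemma \ref{mk1Aesflip} shows that contracting the bad curve $\Gamma$ here yields a singularity whose Hirzebruch--Jung entries sum to $3r$ (the Wahl chain sums to $3r+1$ and exactly one entry drops by $1$), which can never be a Wahl chain; hence the neighborhood is flipping for purely combinatorial reasons. Your plan to rule out divisorial cases by deriving inequalities that violate $K_{X_1}\cdot E_1\ge 0$ attacks the wrong invariant: global positivity on $X_1$ constrains \emph{which} curves $\Gamma$ can exist (their position relative to the chains, as in Lemma \ref{controlcurva2Wahl} and Lemma \ref{-2}), but it does not decide whether a given extremal neighborhood is small or divisorial.

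The second, more serious gap is that your argument only treats the first MMP step. The theorem asserts that the \emph{entire} MMP consists of flips, and after the first flip the surface changes: by Lemma \ref{mk1Aesflip} the new extremal P-resolution is either Type($-2$) (one singularity, $(-2)$-curve) or Type($-1$) (two singularities, $(-1)$-curve), and the new canonical class may again fail to be nef (Example \ref{ejenriques} shows a flip is genuinely required). The bulk of the paper's proof is an induction along the chain of flips $Z_1,\dots,Z_m$: Lemmas \ref{-2} and \ref{-1} show that any $(-1)$-curve giving a $K$-negative neighborhood on $Z_m$ and avoiding the flipping loci would survive untouched back to $\widetilde X_1$ and contradict $K_{X_1}$ nef, and the residual mk1A and mk2A configurations are then eliminated via Lemma \ref{prelemacompartido} and Proposition \ref{propcompartida}, using the explicit discrepancy bounds of Lemma \ref{cotadiscrepancias} and the non-ruledness of the surfaces. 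None of this iteration appears in your proposal, and the ``main obstacle'' case analysis you defer is exactly where the work lies; as written, the proposal does not yet constitute a proof.
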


We can show via an explicit example that one might indeed need to perform flips in a situation as in Theorem \ref{2W} (see Section \ref{s3}). Finally, in Section \ref{s6} we briefly show and discuss certain topological aspects of wormholes, ending with some open questions and with what is left to prove the wormhole conjecture. We also present a (combinatorial) potential counterexample.  

\begin{corollary}
Let $X$ be a normal projective surface with only Wahl singularities and $K_{X}$ ample. We assume:

\begin{itemize}
\item The surface $X$ is not rational.

\item There is an embedded extremal P-resolution in $X$ such that its contraction $(C \subset X) \to (Q \in Y)$ admits another extremal P-resolution $(C' \subset X') \to (Q \in Y)$ as in Theorem \ref{W1W} or Theorem \ref{2W}.

\item  The cohomology group $H^2\big(\widetilde{X},T_{\widetilde{X}}^0(-\log(E+\widetilde{C}) )\big)$ vanishes, where $\widetilde{X} \to X$ is the minimal resolution of $X$, $E$ is the exceptional divisor, and $\widetilde{C}$ is the strict transform of $C$. Hence, there are $\Q$-Gorenstein smoothings  $(X \subseteq \X) \to (0 \in \D)$ and  $(X' \subseteq \X') \to (0 \in \D)$.

\end{itemize}
Then, the KSBA replacement of $(X' \subset \X') \to (0 \in \D)$ lives on the same moduli space as the original $(X \subset \X) \to (0 \in \D)$.
\label{PartialWC}
\end{corollary}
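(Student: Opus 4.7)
The plan is to deduce Corollary \ref{PartialWC} directly from Theorem \ref{W1W} and Theorem \ref{2W}, combined with standard facts about $\Q$-Gorenstein families and the invariance of the smooth-fiber numerical invariants under extremal P-resolution surgery. The proof is essentially bookkeeping: verify the hypotheses of the relevant theorem, then translate its conclusion into a statement about the KSBA moduli space.

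First, I would set $X_1 = X$, $X_2 = X'$, with $(Q \in Y)$ as in the second bullet, and check the hypotheses of whichever of Theorem \ref{W1W} or Theorem \ref{2W} applies. The surface $Y$ is nonrational because $X \to Y$ is birational and $X$ is nonrational by assumption. The ampleness of $K_X$ gives in particular $K_{X_1}$ nef. The cohomology vanishing $H^2(\widetilde{X}, T_{\widetilde X}^0(-\log(E+\widetilde C))) = 0$ is the standard Lee--Park obstruction-vanishing criterion, producing the $\Q$-Gorenstein smoothings $(X \subseteq \X) \to \D$ and $(X' \subseteq \X') \to \D$, which handles the last bullet of both theorems.

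Next, I would observe that both extremal P-resolutions over $(Q \in Y)$ share the same $\delta$-invariant (as recalled in the introduction). Comparing $K_X = f_1^{+*} K_Y + (\text{correction})$ with the analogous expression for $X'$ gives $K_X^2 = K_{X'}^2$; similarly $\chi(\O_X) = \chi(\O_{X'})$. By flatness of the two smoothings, the smooth fibers share identical $(K^2,\chi)$, so they a priori belong to the same $\overline{M}_{K^2,\chi}$. The remaining issue is whether $(X' \subseteq \X') \to \D$ admits a KSBA replacement at all, and in the affirmative case, the replacement automatically lives in $\overline{M}_{K^2,\chi}$ since the smooth fibers of the MMP-modified family are unchanged.

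In the Theorem \ref{W1W} setting, the theorem yields that $K_{X'}$ is nef; together with bigness $K_{X'}^2 = K_X^2 > 0$ inherited from $X$, this means $X'$ is already a KSBA surface, so the original family is its own KSBA replacement. In the Theorem \ref{2W} setting, the theorem says the relative MMP on $(X' \subseteq \X') \to \D$ requires only flips. The flips of \cite{HTU13} change the central fiber but leave the smooth fibers intact, so after finitely many we land on a family $(X'' \subseteq \X'') \to \D$ with nef central-fiber canonical class; bigness is preserved since $K^2 > 0$ is constant in the family, and termination is ensured by the descending complexity for HTU-flips. The central fiber $X''$ is then the KSBA replacement, and by the invariance argument above it lies in the same $\overline{M}_{K^2,\chi}$ as $X$. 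The main obstacle is simply confirming that termination and bigness are correctly guaranteed within the MMP of \cite{HTU13}; once these are in place, the corollary follows.
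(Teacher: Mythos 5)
Your outline correctly identifies the two theorems as the engine of the corollary and handles the bookkeeping about invariants sensibly (your derivation of $K_X^2=K_{X'}^2$ from the equality of the $\delta$'s and of $\Delta$ is a clean alternative to the paper's blow-up/blow-down count on the minimal resolutions). But there is a genuine gap at the very first step: you apply Theorem \ref{W1W} or Theorem \ref{2W} directly with $X_1=X$ and $X_2=X'$, and both theorems require $Y$ to have exactly \emph{one} cyclic quotient singularity $(Q\in Y)$ and to be smooth everywhere else. The corollary only assumes $X$ has ``only Wahl singularities'' --- it may carry arbitrarily many Wahl singularities away from the embedded extremal P-resolution, in which case $Y$ is singular outside $Q$ and the hypotheses of the theorems fail. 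This is not cosmetic: the proofs of both theorems analyze the possible positions of a $K$-negative curve $\Gamma$ relative to the two Wahl chains of the extremal P-resolution, and a curve passing through the extra singularities would escape that analysis entirely.

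The paper bridges this with Lemma \ref{reduction}: using the vanishing of $H^2\big(\widetilde{X},T_{\widetilde{X}}^0(-\log(E+\widetilde{C}))\big)$, one takes a partial $\Q$-Gorenstein deformation that keeps the distinguished extremal P-resolution in every fiber but smooths all the other Wahl singularities; the general fiber $X'_1$ then satisfies the hypotheses of the theorems, and smoothness of the $\Q$-Gorenstein deformation space (Hacking) transports the conclusion back to the original $X_1, X_2$. Note that in your write-up the cohomological hypothesis is used only to produce the two smoothings, whereas its real role in the corollary is precisely to enable this reduction. A second, minor point: in the Theorem \ref{W1W} case, $K_{X'}$ nef and big does not make $X'$ a KSBA surface; one must pass to the canonical model of the family, which of course preserves $K^2$ and $\chi$ and hence the moduli space, as the paper notes in the introduction.
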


\subsubsection*{Notation and conventions}

\noindent  

\begin{itemize}
\item A $(-m)$-curve is a curve $\Gamma$ isomorphic to $\P^1$ with $\Gamma^2=-m$.

\item On a normal surface we use the intersection theory for Weil divisors defined by Mumford in \cite[II (b)]{M61}.

\item If $\phi \colon X \to W$ is a birational morphism, then exc$(\phi)$ is the exceptional divisor.

\item A KSBA surface in this paper is a normal projective surface with log-canonical singularities and ample canonical class \cite{KSB88}.

\item Under a birational map, we may keep the notation for a curve and its strict transform. 

\item For a normal projective surface $Z$, the tangent sheaf is denoted by $T_Z^0:=\mathcal{H}om_{\O_Z}(\Omega_Z^1,\O_Z)$. If $Z$ is not singular and $D$ is a simple normal crossings divisor on $Z$, then $T_Z^0$ is the usual rank $2$ tangent bundle and $T_Z^0(-\log(D))$ is the dual of the rank $2$ vector bundle of differentials with simple poles along $D$.
\end{itemize}

\subsubsection*{Acknowledgments}

The results of this article are mainly based on the master's thesis \cite{V20}. We would like to thank Jonny Evans for useful discussions and comments, and to the referees for constructive and useful suggestions. The first-named author was supported by the FONDECYT regular grant 1190066. The second-named author was funded by the ANID scholarship 22190759.

\section{A review on continued fractions and extremal P-resolutions} \label{s2}

\subsection{Continued fractions}

\begin{definition}
Given $a_1, a_2, \dots, a_r$ positive integers, we define the \emph{Hirzebruch-Jung continued fraction} recursively. If $r=1$, then $[a_1] := a_1$. If $r \geq 2$ and $[a_2, \dots, a_r] \neq 0$, then we define $$ [a_1, \dots, a_r] := a_1 - \frac{1}{[a_2, \dots, a_r]}. $$
\end{definition}

Note that not every list of positive integers makes sense as continued fraction, for an example take $[5, 1, 2, 1]$. On the other hand, if $a_i \geq 2$ for every $i$, the continued fraction automatically makes sense, and $[a_1, \dots, a_r]>1$ by induction on $r$. If $0<q<n$ are coprime numbers, then there exists unique $a_i \geq 2$ such that $$[a_1, \dots, a_r] = \frac{n}{q}.$$

To analyze these continued fractions, given $a_1, \dots, a_r$, we define sequences $p_0=1, p_1=a_1, q_0=0, q_1=1$, and for $2 \leq i \leq r$, $$ p_i = a_ip_{i-1}-p_{i-2}, \quad q_i = a_iq_{i-1} - q_{i-1}.$$ Inductively, one can show that
$$ \begin{pmatrix} a_1 & -1 \\ 1 & 0 \end{pmatrix} \cdot \dots \cdot \begin{pmatrix} a_i & -1 \\ 1 & 0 \end{pmatrix} = \begin{pmatrix} p_i & -p_{i-1} \\ q_i & -q_{i-1} \end{pmatrix}, $$
and also $\frac{p_i}{q_i} = [a_1, \dots, a_i]$ for every $1 \leq i \leq r$. We say that $\{a_1, \dots, a_r\}$ is \emph{admissible} if $p_i >0$ for $i <r$. A sequence is admissible if and only if the matrix
\begin{equation} \label{autoint}
\begin{bmatrix} -a_1 & 1 &  &  &  \\ 1 & -a_2 & 1 &  &  \\  & 1 & -a_3 &  &  \\  &  &  & \ddots & 1 \\  &  &  & 1 & -a_r \end{bmatrix}
\end{equation}
is seminegative definite of rank $\geq r-1$ (see e.g. \cite{OW77}). Note that if $a_i \geq 2$ for all $i$, then the sequence is admissible. If some $a_i$ is $1$ and $r \geq 2$, then 
\begin{align*}
&\{a_2-1, \dots, a_r\}, & &i=1; \\
&\{ a_1, \dots, a_{i-1}-1, a_{i+1}-1, \dots, a_r \}, & &1 \leq i \leq r-1; \\
&\{ a_1, \dots, a_{r-1}-1 \}, & &i=r.
\end{align*}
are also admissible. We call this procedure a \emph{blow-down}. If the original fraction was $\frac{n}{q}$, then the new one is $\frac{n}{q'}$ with $q' \equiv q \pmod{n}$.

Given an admissible continued fraction $[a_1, \dots, a_r]$, after blowing-down every possible entry, we may get two different results, according to the rank of the matrix \eqref{autoint}. If its rank is $r$, then we get either $[1]$ or a continued fraction $[b_1, \dots, b_s]$ with $b_j \geq 2$ for every $1 \leq j \leq s$. Otherwise, we get $[1, 1]$ as a final fraction.

We define \emph{zero continued fraction} as an admissible continued fraction $[a_1, \dots, a_r]$ whose value is equal to zero. Equivalent, the rank of its matrix \eqref{autoint} is $r-1$.

Given a fraction $[a_1, \dots, a_r]=\frac{n}{q}$ with $a_i \geq 2$ and $0 < q <n$ coprime, the \emph{dual fraction} is
$$ \frac{n}{n-q} = [b_1, \dots, b_s], $$
with $b_j \geq 2$ for all $j$. We have a visual way to compute them, c.f. \cite{Rie74}. Draw $a_1-1$ dots horizontally. Under the rightmost one, draw another horizontal line of $a_2-1$ dots, and repeat. For instance, if we apply this to $\frac{19}{11}=[2, 4, 3]$, then we obtain Figure \ref{fig:puntitos}. 

\begin{figure}[htbp]
    \centering
    \begin{tikzpicture}
        \draw[fill=black] (0,1) circle (0.05);
        \draw[fill=black] (0,0.5) circle (0.05);
        \draw[fill=black] (0.5,0.5) circle (0.05);
        \draw[fill=black] (1,0.5) circle (0.05);
        \draw[fill=black] (1,0) circle (0.05);
        \draw[fill=black] (1.5,0) circle (0.05);
    \end{tikzpicture}
    \caption{Dot diagram for $[2, 4, 3]$.}
    \label{fig:puntitos}
\end{figure}
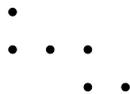

Then, we have $b_1-1$ dots on the first column, $b_2-1$ on the second one, and so on. This shows that $\frac{19}{8}=[3, 2, 3, 2]$. 

Suppose that $ [a_1, \dots, a_r] = \frac{n}{q}$ with $0<q<n$ are coprime and $a_i \geq 2$. One can prove that $$ \begin{pmatrix} a_1 & -1 \\ 1 & 0 \end{pmatrix} \cdot \dots \cdot \begin{pmatrix} a_i & -1 \\ 1 & 0 \end{pmatrix} = \begin{pmatrix} n & -q' \\ q & \frac{1-qq'}{n} \end{pmatrix}, $$
where $q'$ is the inverse of $q$ modulo $n$, since every matrix on the left has determinant 1. Thus, if $[b_1, \dots, b_s] = \frac{n}{n-q}$ is the unique continued fraction with $b_j \geq 2$, then $$ [a_1, \dots, a_r, 1, b_1, \dots, b_s] = 0. $$

\subsection{Zero continued fractions}

Now we will focus on zero continued fractions, following \cite{Ste91}. Consider a zero continued fraction $[a_1, \dots, a_r]$. Blowing down every possible $1$ until the length is $2$, we get $[1, 1]$. Reversing the process, every zero continued fraction can be obtained from $[1, 1]$ through the \emph{blow-ups}
$$ \{a_1, \dots, a_r\} \mapsto \begin{cases} \{1, a_1+1, a_2, \dots, a_r\}, \\ \{ a_1, \dots, a_{i-2}, a_{i-1}+1, 1, a_i+1, a_{i+1}, \dots, a_r\}, \\ \{ a_1, \dots, a_{r-1}, a_r+1, 1 \}. \end{cases} $$

We will show an explicit bijection with triangulation of polygons. A triangulation of a convex polygon $P_0P_1 \dots P_r$ is given by drawing some non-intersecting diagonals on it which divide the polygon into triangles. For a fixed triangulation, we define $v_i$ as the number of triangles which have $P_i$ as one of its vertices. Note that
\begin{equation} \label{sumatriang}
\sum_{i=0}^r v_i= 3(r-1).
\end{equation}

Using induction, one can show  that $[a_1, \dots, a_r]$ is a zero continued fraction if and only if there exists a triangulation of $P_0P_1\dots P_r$ such that $v_i=a_i$ for every $1 \leq i \leq r$. In this way, the number of zero continued fractions of length $r$ is the Catalan number $\frac{1}{r}\binom{2(r-1)}{r-1}$. Also by induction, every triangulation has at least two $v_i$ equal to $1$. They cannot be adjacent unless $r=2$.

\subsection{Cyclic quotient singularities}

\begin{definition}
Given coprime numbers $0<q<n$, the \emph{cyclic quotient singularity} $\frac{1}{n}(1, q)$ is the germ at $0$ of the quotient of $\mathbb{C}^2$ by the action $\zeta \cdot (x, y)=(\zeta x, \zeta^q y)$, where $\zeta$ is a primitive $n$-root of unity.
\end{definition}

The minimal resolution of $X=\frac{1}{n}(1, q)$ can be recover from the continued fraction of $\frac{n}{q}$. If $\frac{n}{q}=[e_1, \dots, e_r]$ with $e_i \geq 2$ and $\sigma\colon \tilde{X} \to X$ is the minimal resolution, the exceptional divisor consists of a chain of $r$ nonsingular rational curves $E_1, \dots, E_r$ with $E_i^2=-e_i$. This is pictured in Figure \ref{fig:resminimalHJ}.

\begin{figure}[htbp]
    \centering
    \begin{tikzpicture}
        \draw (1.5,0.5) ellipse (1.5 and 0.5);
        \draw (1.5,2.5) ellipse (1.5 and 0.5);
        \draw[fill=black] (1.5,0.5) circle (0.05);
        \draw (0.6,2.2) -- (1.2,2.8);
        \draw (1.0,2.8) -- (1.6,2.2);
        \draw[style=dotted] (1.4,2.2) -- (2.0,2.8);
        \draw (1.8,2.8) -- (2.4,2.2);
        \draw (1.5,1.8) -- (1.5,1.2);
        \draw (1.6,1.3) -- (1.5,1.2) -- (1.4,1.3);
        
        \begin{tiny}
        \draw (1.8,0.5) node {$Q$};
        \draw (3.3,0.5) node {$X$};
        \draw (3.3,2.5) node {$\tilde{X}$};
        \draw (0.75,2.7) node {$E_1$};
        \draw (1.45,2.7) node {$E_2$};
        \draw (2.25,2.7) node {$E_r$};
        \end{tiny}
    \end{tikzpicture}
    \caption{Minimal resolution of $\frac{1}{n}(1, q)$.}
    \label{fig:resminimalHJ}
\end{figure}
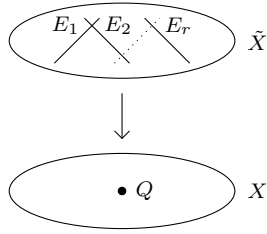

Note that if we do a blow-up at the intersection of $E_i$ and $E_{i+1}$, we get a new chain $E_1, \dots, E_i, F, E_{i+1}, E_r$ of self-intersections $E_i^2=-(e_i+1), E_{i+1}^2=-(e_{i+1}+1), F^2=-1$. A similar remark can be made for blow-downs. This justifies the terminology \emph{blow-down} for continued fractions. We note that we can compare the canonical divisor on $X$ and $\tilde{X}$ as follows 
\begin{equation} \label{ecdiscrepancias}
    K_{\tilde{X}} \equiv \sigma^\ast K_X + \sum_{i=1}^r k_i E_i,
\end{equation}
where $-1<k_i\leq 0$ are the discrepancies of $E_i$.

\begin{definition}
A \emph{Wahl singularity} is a cyclic quotient singularity $\frac{1}{m^2}(1, ma-1)$, where $0<a<m$ are coprime numbers.
\end{definition}

An alternative description can be made by looking at the continued fraction (see \cite[Lemma 3.11]{KSB88}). Every Wahl singularity arises from $[4]$ by applying the operations
\begin{equation} \label{procWahl}
[a_1, \dots, a_r] \mapsto \begin{cases} [2, a_1, \dots, a_{r-1}, a_r+1] \\ [a_1+1, a_2, \dots, a_r, 2]. \end{cases}    
\end{equation}
From this algorithm and by induction on $r$, it is clear that every Wahl singularity $\frac{m^2}{ma-1}=[a_1, \dots, a_r]$ satisfies $\sum_{i=1}^r a_i=3r+1$.

Let $[a_1, \dots, a_r]$ be a Wahl continued fraction. We define integers $\delta_1, \dots, \delta_r$ in the following inductive way. If $r=1$ then $\delta_1:=1$. If we already defined $\delta_1, \dots, \delta_r$ for $[a_1, \dots, a_r]$, then we assign
\begin{align*}
\delta_1, \dots, \delta_r, \delta_1+\delta_r &\text{ to } [a_1+1, \dots, a_r,2] \\
\delta_1+\delta_r, \delta_1, \dots, \delta_r &\text{ to } [2, a_1, \dots, a_r+1].
\end{align*}
These numbers compute the discrepancies in Equation \eqref{ecdiscrepancias}. If $\frac{m^2}{ma-1}=[a_1, \dots, a_r]$ has numbers $\delta_1, \dots, \delta_r$, then
\begin{equation} \label{ecdiscwahl}
    K_{\tilde{X}} \equiv \sigma^\ast K_X + \sum_{i=1}^r \left( -1+\frac{\delta_i}{\delta_1+\delta_r} \right) E_i.
\end{equation}

This gives us an explicit control on discrepancies, which will be used to bound them later in this paper. 

\subsection{Extremal P-resolutions and wormhole singularities}

For the study of the components of the deformation space of quotient singularities, Koll\'ar--Shepherd-Barron introduced P-resolutions in \cite[Section 3]{KSB88}. We only need a particular class of them.

\begin{definition}
Let $0<\Omega<\Delta$ be coprime integers, and let $(Q \in Y)$ be a cyclic quotient singularity $\frac{1}{\Delta}(1, \Omega)$. An \emph{extremal P-resolution} of $(Q \in Y)$ is a partial resolution $f_0^+\colon (C^+ \subset X^+) \to (Q \in Y)$, such that $X^+$ has only Wahl singularities, there is one exceptional curve $C^+$ and isomorphic to $\mathbb{P}^1$, and $K_{X^+}$ is relatively ample.
\label{extremalP}
\end{definition}

Following \cite[\textsection 4]{HTU13}, the surface $X^+$ has at most two Wahl singularities $\frac{1}{m_i^2}(1, m_ia_i-1)$. If we have smooth points, then we set $m_i=a_i=1$. If their associated continued fractions are given by
$$ \frac{m_1^2}{m_1a_1-1}=[e_1, \dots, e_{r_1}], \quad \frac{m_2^2}{m_2a_2-1}=[f_1, \dots, f_{r_2}], $$
and $(C^+)^2=-c$ on the minimal resolution of $X^+$, then
$$ \frac{\Delta}{\Omega}= [f_{r_2}, \dots, f_1, c, e_1, \dots, e_{r_1}]. $$ We denote the extremal P-resolution as $[f_{r_2}, \dots, f_1] - c - [e_1, \dots, e_{r_1}]$. The intersection $K^+ \cdot C^+$ can be computed as $\frac{\delta}{m_1m_2}$, where $\delta=cm_1m_2-m_1a_2-m_2a_1$. The self-intersection $-c$ of $C^+$ can be computed in terms of the continued fraction of $\frac{\Delta}{\Omega}$. 

\begin{theorem} \label{calcularc}
Consider a cyclic quotient singularity $Y=\frac{1}{\Delta}(1, \Omega)$, with $\frac{\Delta}{\Omega}=[b_1, \dots, b_r]$. Suppose that we have an extremal P-resolution $(C^+ \subset X^+)$ over $\frac{1}{\Delta}(1, \Omega)$ with $l$ singularities ($l=0, 1$ or $2$). Then, the self-intersection of the exceptional curve $C^+$ on the minimal resolution of $X^+$ is $-(\sum_{i=1}^r b_i-3r+3-l)$.
\end{theorem}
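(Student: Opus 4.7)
The plan is to analyze the exceptional configuration of the composite morphism $\widetilde{X^+}\to X^+\to Y$, where $\widetilde{X^+}$ is the minimal resolution of $X^+$, and to exploit the invariance of the quantity $\sum_i a_i - 3\cdot(\text{length})$ of a continued fraction under \emph{interior} blow-downs. The exceptional locus of this composite over $Q$ is a chain of smooth rational curves whose self-intersections read $[f_{r_2},\ldots,f_1,c,e_1,\ldots,e_{r_1}]$: the curve $C^+$ sits in the middle with $(C^+)^2=-c$, flanked by the two Wahl resolution chains (one or both possibly empty, according to whether $l=2$, $1$, or $0$). This concatenated chain is a resolution chain of $\frac{1}{\Delta}(1,\Omega)$, so the minimal resolution chain $[b_1,\ldots,b_r]$ is obtained from it by iteratively blowing down the entries equal to $1$.

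The crux is to show that every such blow-down is interior, never at an edge of the chain. For $l=0$ and $l=1$, the relative ampleness of $K_{X^+}$, expressed through $\delta=cm_1m_2-m_1a_2-m_2a_1>0$ with the convention $m_j=a_j=1$ on a smooth side, forces respectively $c\geq 3$ and $c\geq 2$; together with the Wahl entries being $\geq 2$, the chain is therefore already in normalized form and there is nothing to blow down. For $l=2$, the two outermost entries $f_{r_2}$ and $e_{r_1}$ are Wahl entries, hence $\geq 2$; since an interior blow-down leaves the edges of the chain untouched, these edges remain $\geq 2$ throughout, so every entry equal to $1$ that appears during the reduction is necessarily interior, and is blown down interiorly.

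Each interior blow-down replaces a triple $(a_{i-1},1,a_{i+1})$ by $(a_{i-1}-1,a_{i+1}-1)$, decreasing the sum by $3$ and the length by $1$; hence $I([a_1,\ldots,a_k]):=\sum_j a_j-3k$ is preserved. Using the identities $\sum_i e_i=3r_1+1$ and $\sum_j f_j=3r_2+1$ recalled earlier for Wahl continued fractions, a direct computation gives $I=c-(3-l)$ for the initial concatenated chain in each of the cases $l=0,1,2$, whereas the minimal chain satisfies $I=\sum_{i=1}^r b_i-3r$. Equating these two values yields $c=\sum_{i=1}^r b_i-3r+3-l$, which translates into $(C^+)^2=-\bigl(\sum_{i=1}^r b_i-3r+3-l\bigr)$, as claimed. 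The main obstacle is the edge-blow-down analysis in the previous paragraph: without the ampleness lower bounds $c\geq 3-l$ for $l\leq 1$, a blow-down at the left or right end of the chain would change $I$ by $+1$ and spoil the bookkeeping.
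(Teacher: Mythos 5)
Your overall strategy coincides with the paper's: both proofs track the quantity $I=\sum_i a_i-3\cdot(\text{length})$, which is unchanged by blow-ups/blow-downs at intersection points of the exceptional chain, and combine this with the identities $\sum_i e_i=3r_1+1$, $\sum_j f_j=3r_2+1$ for Wahl chains. The difference lies in how the crucial step --- that the reduction of $[f_{r_2},\dots,f_1,c,e_1,\dots,e_{r_1}]$ to $[b_1,\dots,b_r]$ involves only \emph{interior} blow-downs --- is justified. The paper imports this from \cite[Lemma 3.13, Lemma 3.14]{KSB88}: one passes from the minimal resolution of $Y$ to the minimal resolution of the P-resolution by blowing up only at intersection points of exceptional curves. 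You attempt to prove it directly, and this is where your argument has a gap.

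The problem is the assertion that ``an interior blow-down leaves the edges of the chain untouched.'' It does not: blowing down a $1$ sitting in position $2$ replaces $(a_1,1,a_3,\dots)$ by $(a_1-1,a_3-1,\dots)$, so the edge entry $a_1$ is decremented (the edge \emph{curve} survives, but its self-intersection changes). Consequently your induction ``the edges remain $\geq 2$, hence every $1$ that appears is interior'' does not close: a cascade of $1$'s could in principle propagate outward and force an edge blow-down, which changes $I$ by $+1$ and destroys the bookkeeping, exactly as you note. Ruling this out genuinely uses the structure of Wahl chains (or the cited lemmas of Koll\'ar--Shepherd-Barron), not merely the fact that the outermost entries start out $\geq 2$. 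Your handling of $l\leq 1$ via the ampleness bound $c\geq 3-l$ is correct and is a clean way to dispose of those cases (the paper just calls $l=0$ trivial and $l=1$ ``similar''); but for $l=2$ with $c=1$ --- the only situation in which any blow-down actually occurs --- the interiority claim needs either a real combinatorial proof or the citation the paper uses.
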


As a direct consequence, note that if $\sum_{i=1}^r b_i<3r$, there are no extremal P-resolutions. If $\sum_{i=1}^r b_i=3r$, then $c$ can be $-1$ (if there are two singularties) or $-2$ (if there is only one singularity) or $-3$ (if $l=0$), and so on.

\begin{proof}
If $l=0$, then $r=1$ and the result is trivially true. Suppose that $l=2$; the proof for $l=1$ is similar. Consider the extremal P-resolution 
$$ [f_{r_2}, \dots, f_1]-c-[e_1, \dots, e_{r_1}]. $$
Note that $\sum_{i=1}^{r_1} e_i+c+\sum_{j=1}^{r_2} f_j =3(r_1+r_2)+c+2$, since we have two Wahl singularities, and so $\sum_{i=1}^{r_1} e_i=3r_1+1$ and $\sum_{j=1}^{r_2} f_j=3r_2+1$. 

From \cite[Lemma 3.13, Lemma 3.14]{KSB88} we know that, from the minimal resolution of $Y$, one has to blow-up only at the intersection points of exceptional curves to obtain the minimal resolution of the extremal P-resolution. In this way, the sum of self-intersections of exceptional curves plus three times the amount of them remains constant at each blow-up (since at every blow-up we subtract $3$ to the sum of self-intersections, and we add $1$ to the amount of curves). This shows that
$$ \sum_{i=1}^r b_i-3r = \left(\sum_{i=1}^{r_1} e_i+c+\sum_{j=1}^{r_2} f_j\right) - 3(r_1+r_2+1).$$ It follows that $\sum_{i=1}^r b_i-3r=c-1$.
\end{proof}

Given a coprime pair $0<\Omega<\Delta$, one can find all possible extremal P-resolutions by looking at the dual fraction $\frac{\Delta}{\Delta-\Omega}$. More precisely, we have the following result (see \cite[Prop 4.1.]{HTU13}).

\begin{proposition} \label{Presdesdedual}
If $\frac{\Delta}{\Delta-\Omega}=[c_1, \dots, c_s]$, then there is a bijection between extremal P-resolutions and pairs $1 \leq \alpha<\beta \leq s$ such that
\begin{equation} \label{restardos}
[c_1, \dots, c_{\alpha-1}, c_\alpha-1, c_{\alpha+1}, \dots, c_{\beta-1}, c_\beta-1, c_{\beta+1}, \dots, c_s]=0. 
\end{equation}
Moreover, the $a_i,m_i$ and $\delta$ of the corresponding extremal P-resolution (see right after Definition \ref{extremalP}) can be computed as: $\frac{m_2}{a_2}=[c_1, \dots, c_{\alpha-1}]$, $\frac{m_1}{a_1}=[c_s, \dots, c_{\beta+1}]$ (if $\alpha=1$ or $\beta=s$, the associated points are smooth), and  $\frac{\delta}{\varepsilon}=[c_{\alpha+1}, \dots, c_{\beta-1}]$, where $0<\varepsilon<\delta$ (or $\delta=1$ if $\alpha+1=\beta$).
\end{proposition}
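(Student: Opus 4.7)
The plan is to exploit the zero continued fraction identity reviewed in Section \ref{s2}: if $[b_1,\dots,b_r] = \Delta/\Omega$ and $[c_1,\dots,c_s] = \Delta/(\Delta-\Omega)$ are the two dual standard Hirzebruch--Jung continued fractions, then $[b_1,\dots,b_r,1,c_1,\dots,c_s]=0$. An extremal P-resolution of $(Q \in Y)$ gives a coarser expansion $\Delta/\Omega = [f_{r_2},\dots,f_1,c,e_1,\dots,e_{r_1}]$, obtained from $[b_1,\dots,b_r]$ by a sequence of blow-ups at intersections of exceptional curves (exactly as in the proof of Theorem \ref{calcularc}). Since blow-ups preserve the zero property of a continued fraction, the string
\[
Z = [f_{r_2},\dots,f_1,\,c,\,e_1,\dots,e_{r_1},\,1,\,c_1,\dots,c_s]
\]
is itself a zero continued fraction. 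The bijection between extremal P-resolutions and pairs $(\alpha,\beta)$ will arise by showing that $Z$ reduces, through a canonical sequence of blow-downs interlocked with the Wahl reductions \eqref{procWahl}, to the shorter zero continued fraction $[c_1,\dots,c_\alpha-1,\dots,c_\beta-1,\dots,c_s]$ appearing in \eqref{restardos}; the indices $\alpha$ and $\beta$ will be determined by the combinatorics of the Wahl chains.

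I would run an induction on $r_1+r_2$. The base case $r_1=r_2=0$ forces $\Delta/\Omega=[c]$, hence $\Delta=c$ and $[c_1,\dots,c_s]=[2,\dots,2]$ with $s=c-1$ entries, so the unique pair $(\alpha,\beta)=(1,s)$ gives $[1,2,\dots,2,1]=0$, matching the trivial P-resolution data. For the inductive step, I would use that any non-trivial Wahl continued fraction ends with a $2$ at one of its extremes and, by reversing the appropriate move in \eqref{procWahl}, peels off to a strictly shorter Wahl fraction; at the level of $Z$ this peeling is realised by two consecutive blow-downs, starting at the central $1$ and propagating inward past the terminal $2$ of the Wahl chain. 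Tracking how these blow-downs modify the $c$-side, one checks that the total effect on $[c_1,\dots,c_s]$ is to update the candidate decrement positions $\alpha$ and $\beta$ in a controlled way, while the remaining entries of $[c_1,\dots,c_s]$ stay put. When all Wahl reductions are exhausted, only two decrements survive, producing exactly the zero continued fraction in \eqref{restardos}.

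The reverse direction runs the same procedure backwards: given $(\alpha,\beta)$ satisfying \eqref{restardos}, I would build up Wahl chains by inverse blow-downs, with the Wahl property at each stage guaranteed because every inverse step reproduces precisely one operation of \eqref{procWahl}. The explicit formulas for $m_i$, $a_i$, $\delta$, $\varepsilon$ then fall out by reading off the three segments of $[c_1,\dots,c_s]$ left outside (or between) the two decrement positions: by local applications of the duality identity, $[c_1,\dots,c_{\alpha-1}]$ and $[c_s,\dots,c_{\beta+1}]$ encode $m_2/a_2$ and $m_1/a_1$ (duals of the Wahl chains of the two singularities), while $[c_{\alpha+1},\dots,c_{\beta-1}]$ encodes $\delta/\varepsilon$, with a direct computation recovering $\delta=cm_1m_2-m_1a_2-m_2a_1$. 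The main obstacle is the bookkeeping in the induction: one has to verify that during the reduction of $Z$ only two entries of $[c_1,\dots,c_s]$ are ever altered and that the resulting forward and backward assignments are inverse to each other. A conceptually cleaner alternative would be to use the triangulation-of-polygons viewpoint from Section \ref{s2}: extremal P-resolutions correspond to certain pairs of distinguished ``ears'' of the triangulation associated to $Z$, and $(\alpha,\beta)$ record the positions of those ears along the $c$-side, making the bijection combinatorially transparent provided the Wahl structure is translated into a condition on the triangulation.
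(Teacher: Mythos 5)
First, a point of comparison: the paper does not prove Proposition \ref{Presdesdedual} at all --- it is quoted from \cite{HTU13} (Prop.\ 4.1) --- so there is no internal proof to measure your plan against; I can only assess it on its own terms.

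Your strategy (embed the extremal P-resolution into the long zero continued fraction $Z$ and reduce by blow-downs) is workable in examples, but as written it has two genuine gaps, plus a convention error. The convention error first: the identity $[a_1,\dots,a_r,1,b_1,\dots,b_s]=0$ needs the dual string in the correct orientation; for $\frac{36}{13}=[3,5,2,2]$ with dual $\frac{36}{23}=[2,3,2,2,4]$ one computes $[3,5,2,2,1,2,3,2,2,4]=\frac{432}{155}\neq 0$, whereas $[3,5,2,2,1,4,2,2,3,2]=0$. Unless you reverse the $c$-string, your $Z$ is not a zero continued fraction to begin with. The first real gap is the central claim of your induction, namely that ``only two entries of $[c_1,\dots,c_s]$ are ever altered and the remaining entries stay put.'' Carrying out the reduction for the P-resolution $[3,5,2]-2$ over $\frac{1}{36}(1,13)$, the string $Z=[3,5,2,2,1,4,2,2,3,2]$ contracts in five blow-downs to $[3,2,1,3,2]$, but if you track positions you find the surviving string is a hybrid: its first two entries are leftover, partially decremented entries of the Wahl chain ($f_3$ and $f_2-3$), not untouched entries of the $c$-string, and the fact that they equal $c_5-1$ and $c_4$ is exactly the Riemenschneider duality between the Wahl chain and the outer segment $[c_1,\dots,c_{\alpha-1}]$ (resp.\ $[c_s,\dots,c_{\beta+1}]$). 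In other words, the identities $\frac{m_2}{a_2}=[c_1,\dots,c_{\alpha-1}]$ and $\frac{m_1}{a_1}=[c_s,\dots,c_{\beta+1}]$ are not consequences that ``fall out'' of the reduction --- they are the input needed to make the reduction land on \eqref{restardos}. Without proving this duality your induction does not close; with it, most of the proposition is already in hand.

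The second gap is the reverse direction. Given a pair $(\alpha,\beta)$ satisfying \eqref{restardos}, you assert that inverse blow-downs automatically assemble into Wahl chains ``because every inverse step reproduces one operation of \eqref{procWahl}.'' That is precisely the nontrivial point: one must show that the two outer segments of the decremented zero fraction are duals of Wahl chains, that the middle segment computes $\frac{\delta}{\varepsilon}$, and that the resulting partial resolution has $K$ relatively ample, i.e.\ $\delta=cm_1m_2-m_1a_2-m_2a_1>0$; none of this is addressed, and it is needed both for surjectivity of your correspondence and for the ``moreover'' clause of the statement. A smaller but real issue: during the reduction the active $1$ can in principle acquire two neighbours equal to $2$, after which a blow-down produces a $0$ entry, so your ``canonical sequence of blow-downs'' must be shown to avoid this configuration (the Wahl structure should rule it out, but that too is an argument, not an observation). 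The skeleton of your plan is salvageable, and your closing suggestion to recast everything via triangulations is in fact closer to how the paper itself manipulates WW sequences in Theorems \ref{atmosttwo} and \ref{samedelta}, but the two steps you defer to ``one checks'' are the theorem.
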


It will be useful to denote the expression in Equation \eqref{restardos} with two bars as $$ [c_1, \dots, \overline{c}_\alpha, \dots, \overline{c}_\beta, \dots, c_s]. $$ Moreover, if it admits a second extremal P-resolution, then we will denote it with two underlines. For instance, if $\Delta=36, \Omega=13$, then we write
$$ \frac{36}{36-13}=[2, \underline{3}, \overline{2}, \underline{2}, \overline{4}], $$ and so we know it admits two extremal P-resolutions, and we know how to obtain them. In this example, $[2, 3, \overline{2}, 2, \overline{4}]$ is associated to the extremal P-resolution $[3, 5, 2]-2$, and $[2, \overline{3}, 2, \overline{2}, 4]$ corresponds to $[4]-1-[6, 2, 2]$.

\begin{definition} 
As in \cite[Section 4]{HTU13}, a sequence $\{a_1, \dots, a_r\}, a_i >1$ is \emph{of WW type} if there exists $1\leq \alpha<\beta\leq r$ such that
$$ [a_1, \dots, \overline{a}_\alpha, \dots, \overline{a}_\beta, \dots, a_r]=0. $$

A \emph{wormhole} singularity is a cyclic quotient singularity $\frac{1}{\Delta}(1, \Omega)$ which admits at least two extremal P-resolutions. Equivalently, the continued fraction of $\frac{\Delta}{\Delta-\Omega}$ is of WW type by means of at least two pairs $(\alpha, \beta), (\alpha', \beta')$.
\end{definition}

As a consequence of Theorem \ref{atmosttwo}, we will have that a wormhole singularity admits precisely two extremal P-resolutions.

If $\{a_1, \dots, a_r\}$ is a sequence of WW type, there is a triangulation of a polygon $P_0P_1\dots P_r$ such that $v_i=a_i$. Thus, we define $a_0:=v_0$. Note that by Equation \eqref{sumatriang} we have $a_0=3r-1-\sum_{i=1}^r a_i$, so it does not depend on the pair $(\alpha, \beta)$. Note also that $a_0$ may be $1$. 

Therefore, we have two cases: (A) $a_0>1$ or (B) $a_0=1$ (as in \cite[\textsection 4.2]{HTU13}). We will focus on proving statements for the case (A), since from that proof we will deduce the case (B) as a consequence. The main idea is that we can ``remove'' the vertex $P_0$ from the polygon $P_0P_1\dots P_r$, and repeat until all entries are greater than $1$. 

Our next goal is to give a simplified proof of \cite[Thm. 4.3.]{HTU13}, and a new proof of Theorem \cite[Thm 4.4.]{HTU13}. 

\begin{theorem}[{\cite[Thm 4.3.]{HTU13}}] \label{atmosttwo}
A cyclic quotient singularity has at most two distinct extremal P-resolutions.
\end{theorem}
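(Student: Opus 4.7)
The plan is to use the bijection from Proposition~\ref{Presdesdedual} combined with the correspondence between zero continued fractions and triangulations of convex polygons. Writing $\frac{\Delta}{\Delta-\Omega}=[c_1,\dots,c_s]$ with each $c_i\geq 2$, each extremal P-resolution of $\frac{1}{\Delta}(1,\Omega)$ corresponds to a pair $(\alpha,\beta)$, $1\leq\alpha<\beta\leq s$, with $[c_1,\dots,c_\alpha-1,\dots,c_\beta-1,\dots,c_s]=0$, which in turn corresponds to a triangulation $T_{\alpha\beta}$ of the polygon $P_0P_1\cdots P_s$ with valences $v_0=a_0:=3s-1-\sum c_i$, $v_\alpha=c_\alpha-1$, $v_\beta=c_\beta-1$, and $v_i=c_i$ otherwise. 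I split into cases (A) $a_0\geq 2$ and (B) $a_0=1$ following the text.

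In Case A, since $v_0\geq 2$ and $v_i=c_i\geq 2$ for $i\notin\{\alpha,\beta\}$, and every polygon triangulation has at least two leaves, the two leaves of $T_{\alpha\beta}$ must be precisely $P_\alpha$ and $P_\beta$; hence $c_\alpha=c_\beta=2$ and the dual graph of $T_{\alpha\beta}$ is a path, i.e., a zigzag triangulation. The vertex $P_0$ sits at the apex of a fan of $a_0$ consecutive triangles within this zigzag. Any diagonal from $P_0$ is shared between two triangles, both of which contain $P_0$ and hence belong to the fan; so the two ``extreme'' rays of the fan at $P_0$ are forced to be the polygon edges $P_0P_1$ and $P_0P_s$. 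The path condition on the dual graph similarly forces the outer edge of each internal fan triangle to be a polygon edge, so the fan vertices at $P_0$ form a consecutive block $P_1,P_m,P_{m+1},\dots,P_{m+a_0-2},P_s$ for some $m\in\{2,\dots,s-a_0+1\}$, with $c_{m+1}=\cdots=c_{m+a_0-3}=2$. The remainder of $T_{\alpha\beta}$ consists of two sub-zigzag triangulations of the flanking sub-polygons $P_1\cdots P_m$ and $P_{m+a_0-2}\cdots P_s$, each housing one of the ears $P_\alpha,P_\beta$. Since a zigzag triangulation is uniquely determined by its valence sequence, and the valences in each sub-polygon must satisfy the natural sum condition (and rule out adjacent leaves), a case analysis shows that at most two values of $m$ produce a valid $T_{\alpha\beta}$, yielding the bound of two pairs $(\alpha,\beta)$ in Case A.

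For Case B, $P_0$ is itself a leaf of every $T_{\alpha\beta}$, with ear triangle $P_sP_0P_1$. Removing this ear and the vertex $P_0$ gives a triangulation of the smaller polygon $P_1\cdots P_s$ with valences at $P_1$ and $P_s$ each decreased by $1$. Relabeling yields a smaller instance of the combinatorial problem with new $a_0'=c_1-1$, and iterating this ``blow-down of $P_0$'' (as suggested in the text) eventually lands in Case A. Since the reduction is a bijection on valid pairs $(\alpha,\beta)$ --- with straightforward bookkeeping when $\alpha=1$ or $\beta=s$ --- the Case A bound transfers. The main obstacle is the combinatorial rigidity step in Case A: ruling out a third admissible fan center $m$, which requires tight control over the sum-of-valence constraints in each sub-polygon together with the existence of sub-zigzag triangulations realizing the prescribed valences.
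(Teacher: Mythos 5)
Your setup matches the paper's: reduce via Proposition~\ref{Presdesdedual} to counting pairs $(\alpha,\beta)$ with $[c_1,\dots,\overline{c}_\alpha,\dots,\overline{c}_\beta,\dots,c_s]=0$, pass to triangulations, observe in case (A) that the two ears force $c_\alpha=c_\beta=2$ and that the dual graph is a path, and reduce case (B) to case (A) by deleting $P_0$. All of that is sound and is exactly how the paper begins. But the theorem's entire content is the finiteness bound --- that no \emph{third} pair $(\alpha,\beta)$ can exist --- and that is precisely the step you do not supply. You assert that ``a case analysis shows that at most two values of $m$ produce a valid $T_{\alpha\beta}$'' and then, in your closing sentence, concede that this rigidity step is ``the main obstacle.'' An unproved assertion of the main point, flagged as an obstacle, is a genuine gap, not a proof.

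For comparison, the paper closes this gap by an algebraic device rather than a direct case analysis of fan positions: it encodes the cyclic sequence $\{a_0,\dots,a_r\}$ (all entries $\geq 2$ in case (A)) as a $0/1$ \emph{indicator sequence} of even length $2m$, notes that the two demoted entries correspond to antipodal pairs of $1$'s, folds the sequence to $f_j=(e_j+e_{j+m})/2$ on $\Z/m\Z$, and forms $F=\sum_j \mu^j f_j$ for a primitive $m$-th root of unity $\mu$. The symmetry relation satisfied by a valid pair translates into the equation $(\mu^t)^2\mu\overline{F}-(\mu^t)(\mu+1)+F=0$, which has degree at least $1$ in $\mu^t$ for $m>2$, hence at most two solutions; each solution determines the pair. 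If you want to complete your triangulation-based route, you would need to actually carry out the sum-of-valence bookkeeping in the two flanking sub-polygons and prove that at most two placements are realizable --- and it is not obvious that parametrizing by the fan center $m$ alone suffices, since distinct pairs $(\alpha,\beta)$ could a priori share the same $m$. As written, the proposal stops exactly where the proof has to begin.
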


\begin{theorem}[{\cite[Thm 4.4.]{HTU13}}] \label{samedelta}
If a cyclic quotient singularity admits two extremal P-resolutions, then the $\delta$’s are equal.
\end{theorem}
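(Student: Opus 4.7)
The plan is to prove Theorem \ref{samedelta} by induction on the length $s$ of $\frac{\Delta}{\Delta-\Omega}=[c_1,\ldots,c_s]$, using the bijection from Section \ref{s2} between zero continued fractions and triangulations of polygons. By Proposition \ref{Presdesdedual}, the invariant $\delta$ attached to a WW pair $(\alpha,\beta)$ is the numerator of the partial continued fraction $[c_{\alpha+1},\ldots,c_{\beta-1}]$; the task therefore reduces to showing that this numerator is the same for both WW pairs when two exist. The induction splits according to the dichotomy introduced in the paper: Case (A) $a_0>1$ and Case (B) $a_0=1$, where $a_0=3s-1-\sum_{i=1}^s c_i$ is the valence at $P_0$ in any WW triangulation of $P_0P_1\cdots P_s$.

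For Case (B), $P_0$ is an ear of every WW triangulation, so both WW triangulations contain the triangle $P_0P_1P_s$. Removing this ear reduces the polygon to $P_1\cdots P_s$ and the underlying continued fraction to $(c_1-1,c_2,\ldots,c_{s-1},c_s-1)$, with any $1$'s appearing at the ends subsequently blown down. Because $\delta$ depends only on the interior segment $c_{\alpha+1},\ldots,c_{\beta-1}$, which sits strictly inside the reduced continued fraction whenever $\alpha\geq 2$ and $\beta\leq s-1$, any such pair descends with $\delta$ unchanged. The boundary cases $\alpha=1$ or $\beta=s$ are handled directly by noting that the modified extremal entries $c_1$ or $c_s$ do not appear in the partial continued fraction defining $\delta$. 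The inductive hypothesis then forces the equality of $\delta$'s.

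For Case (A), $P_0$ is never an ear, and since every polygon triangulation has at least two ears, these must coincide with $P_\alpha$ and $P_\beta$ (the remaining vertices have valence $c_i\geq 2$ in the WW-modified triangulation). This forces the local rigidity $c_\alpha=c_\beta=2$, and likewise $c_{\alpha'}=c_{\beta'}=2$ for the second pair. A refined inductive analysis that iteratively peels off pairs of ear triangles should then show that the two WW pairs are related by a uniform shift, making the partial fractions $[c_{\alpha+1},\ldots,c_{\beta-1}]$ and $[c_{\alpha'+1},\ldots,c_{\beta'-1}]$ reversals of one another; combined with the classical identity that reversing a continued fraction preserves its numerator (an immediate consequence of the matrix presentation of continued fractions recalled in Section \ref{s2}), this yields $\delta_1=\delta_2$. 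The main obstacle is precisely this structural claim in Case (A): translating the local ear constraint $c_\alpha=c_\beta=c_{\alpha'}=c_{\beta'}=2$ into a global shift/palindromic symmetry on the full continued fraction is delicate, and is presumably on par in difficulty with the authors' new proof of Theorem \ref{atmosttwo} that there are at most two extremal P-resolutions.
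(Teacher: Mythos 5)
Your Case (B) reduction (peeling the ear at $P_0$ and observing that the interior segment $c_{\alpha+1},\dots,c_{\beta-1}$ defining $\delta$ is untouched) matches what the paper does, but your Case (A) --- which is the heart of the theorem --- contains a genuine gap that you yourself flag: the claim that the two WW pairs are related by a uniform shift making $[c_{\alpha+1},\dots,c_{\beta-1}]$ and $[c_{\alpha'+1},\dots,c_{\beta'-1}]$ reversals of one another is never established, and nothing in the ear-counting argument forces such a global symmetry. The local constraint $c_\alpha=c_\beta=c_{\alpha'}=c_{\beta'}=2$ only tells you where the ears of the two triangulations sit; it does not relate the two triangulations to each other, and the paper's own analysis of when two pairs can coexist (the quadratic equation in $\mu^t$ in the proof of Theorem \ref{atmosttwo}) gives no shift or palindromic structure to lean on. As written, the proposal reduces the theorem to an unproven structural conjecture.

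The paper closes Case (A) by a different and much shorter device: a trace computation. For any valid pair $(p,q)$ one has $[a_{p+1},\dots,a_r,a_0,\dots,a_{p-1}]=\frac{\delta^2}{\delta\lambda+1}$ (the dual of a Wahl fraction), and multiplying the corresponding product of matrices
$\left(\begin{smallmatrix} a_i & -1 \\ 1 & 0 \end{smallmatrix}\right)$
by the matrix for $a_p=2$ yields a matrix of trace $\delta^2-2$. Since the trace of a product is invariant under cyclic permutation of the factors, $\delta^2-2$ equals the trace of the cyclic product over \emph{all} indices $a_0,a_1,\dots,a_r$, a quantity that manifestly does not depend on the choice of pair; hence $\delta$ is the same for both extremal P-resolutions. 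This sidesteps any need to compare the two triangulations directly. If you want to salvage your approach, you would need to either prove the reversal/shift claim (which appears to be at least as hard as the theorem itself) or replace it with an invariant of the full cyclic word, which is exactly what the trace provides.
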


To prove Theorem \ref{atmosttwo}, note that it suffices to show, by Proposition \ref{Presdesdedual}, that a sequence of WW type $\{a_1, \dots, a_r\}$ admits at most two pairs $(\alpha, \beta)$ such that
$$ [a_1, \dots, a_\alpha-1, \dots, a_\beta-1, \dots, a_r]=0. $$ Let $a_0=3r-1-\sum_{i=1}^r a_i$ as before, and assume that we are in  the case (A), i.e. $a_0>1$. Since the triangulation of $P_0P_1 \dots P_r$ needs to have at least two vertices with $v_i=1$, we must have $a_\alpha-1=a_\beta-1=1$, and thus $a_\alpha=a_\beta=2$.

Note then that $[a_{\alpha+1}, \dots, a_\beta-1, \dots, a_r, a_0, a_1, \dots, a_{\alpha-1}]=0$, since we have a triangulation. A matrix computation shows that
$$ \frac{m}{m-a}=[a_{\beta-1}, \dots, a_{\alpha+1}], \quad \frac{m'}{m'-a'}=[a_{\beta+1}, \dots, a_r, a_0, \dots, a_{\alpha-1}] $$
has $m=m'$ and $a+a'=m$. In this way, we have that
$$ [a_{\alpha+1}, \dots, a_r, a_0, \dots, a_{\alpha-1}]=\frac{m^2}{ma+1} $$
is the dual of a Wahl singularity. All of them are obtained from $[2, 2, 2]$ by applying the same procedure from Equation \eqref{procWahl}, which can be seen as a consequence of Riemenschneider diagrams. See \cite[\textsection 4.2]{HTU13} for another proof of this fact.

Thus, to produce sequences of WW type, we start with $[2, \overline{2}, 2]$, then we apply Equation \eqref{procWahl}, and finally we add a $\overline{2}$ to close the ``cycle". After that we choose one of the entries different from $\overline{2}$, and remove it. That entry will be the $a_0$.

To simplify the proof of Theorem \ref{atmosttwo}, we will use the following sequences of $0$s and $1$s.

\begin{definition}
Given $\{a_0, \dots, a_r\}, a_i \geq 2$, we define its \emph{indicator sequence} as
$$\{ 1, \underbrace{0, \dots, 0}_{a_0-2}, 1, \underbrace{0, \dots, 0}_{a_1-2}, 1, \dots, 1, \underbrace{0, \dots, 0}_{a_r-2} \}$$.
\end{definition}

We think on $\{a_0, \dots, a_r\}$ and its indicator sequence indexed by a cyclic groups. As an example, the indicator sequence of $\{ 2, 3, 4, 2, 3 \}$ is $\{1, 1, 0, 1, 0, 0, 1, 1, 0 \}$. Note that we can completely recover the sequence $\{a_0, \dots, a_r\}$ from the indicator sequence, and so we can study sequences of WW type from its indicator sequence, in the case (A). We also note that, for every $i$, there are two indices $l_i, m_i$ such that $e_{l_i+1}, \dots, e_{m_i-1}$ are all the zeroes induced by $a_i$. The main advantage of this is that it makes more symmetric the procedure from Equation \eqref{procWahl}. We start with $\overline{1, 1}$; then, we add $1$ to one side and $0$ to the other, as follows
\[ \{\overline{1, 1}\} \to \{ 0, \overline{1, 1}, 1 \} \to \{ 1, 0, \overline{1, 1}, 1, 0 \} \to \{ 1, 1, 0, \overline{1, 1}, 1, 0, 0 \} \to \dots \]
We repeat and then add $\overline{1, 1}$ to the end. In particular, all these indicator sequences have an even number of entries, and the numbers 1 which corresponds to the $\overline{2}$ are opposites.

\begin{proof}[Proof of Theorem \ref{atmosttwo}]
We assume $a_0>1$ as before. We consider a sequence of WW type $\{a_1, \dots, a_r\}$, and $\{e_0, \dots, e_{2m-1}\}$ its indicator sequence. Consider $p<q$ such that $[a_1, \dots, \overline{a}_p, \dots, \overline{a}_q, \dots, a_r]=0$. Thus, if $t, t+1$ and $t+m, t+m+1$ are the corresponding indices for $a_p=2$ and $a_q=2$, the construction yields
\begin{equation} \label{sistemae}
e_j=\begin{cases} 2-e_{2t+1-j}, & j = r, r+1, r+m, r+m+1; \\ 1-e_{2t+1-j}, & j \neq r, r+1, r+m, r+m+1. \end{cases}
\end{equation}
Given the indicator sequence, it suffices then to show that there are at most two pairs $\{t, t+m\}$ which makes Equation \eqref{sistemae} true. Since $t$ and $t+m$ gives the same pair, we will define $f_j=(e_j+e_{j+m})/2$, as a sequence indexed by $\Z/m\Z$. The Equation \eqref{sistemae} translates to
\begin{equation} \label{sistemaefe}
f_j = \begin{cases} 2-f_{2t+1-j}, & j = r, r+1; \\ 1-f_{2t+1-j}, & j \neq r, r+1. \end{cases}
\end{equation}

We are going to use the same trick as in \cite[\textsection 4.2]{HTU13}. Fix $\mu$ a primitive $m$-root of unity, and define
\begin{equation} \label{elF}
F = \sum_{j=0}^{m-1} \mu^j f_j.
\end{equation}
Adding Equation \eqref{sistemaefe} multiplied by $\mu^j$ from $j=0$ to $m$, we get
$$ (\mu^t)^2 \cdot \mu \overline{F}-(\mu^t) \cdot (\mu+1) +F=0. $$
For $m>2$, this is an equation of degree at least $1$ on $\mu^t$. Thus, there are at most two valid values of $\mu^t$. Note that $m=2$ happens only for the indicator sequence $\{ 1, 1, 1, 1 \}$, associated to $\{\overline{2}, \underline{2}, \overline{2}, \underline{2}\}$. By the correspondence between sequences of type WW in case (A) and indicator sequences, this shows that there are at most two pairs in this case.

Suppose now that $a_0=1$, this is, we are now on case (B). Note that for every pair $\alpha<\beta$ such that $[a_1, \dots, \overline{a}_\alpha, \dots, \overline{a}_\beta, \dots, a_r]=0$, the corresponding triangulation on $P_0P_1\dots P_r$ must have a triangle $P_0P_1P_r$. We can remove then vertex $P_0$, and look to pairs for the new sequence $a_1-1, a_2, \dots, a_{r-1}, a_r-1$, since it is easy to show that they are in bijection with pairs for the original sequence. Inductively, this reduces case (B) to case (A). 
\end{proof}

\begin{proof}[Proof of Theorem \ref{samedelta}]

We will use Proposition \ref{Presdesdedual}. Consider a sequence $\{a_1, \dots, a_r\}$ with $a_0>1$, i.e. we are in case (A). If $p<q$ is a pair such that $[a_1, \dots, \overline{a}_p, \dots, \overline{a}_q, \dots, a_r]=0$, then $[a_{p+1}, \dots, a_{q-1}]=\frac{\delta}{\varepsilon}$ for some $\varepsilon$. Thus,
$$ [a_{p+1}, \dots, a_r, a_0, \dots, a_{p-1}]=\frac{\delta^2}{\delta \lambda+1}, $$
for some $\lambda<\delta$. Since all entries are $\geq 2$, we can compute
\begin{align*}
& \begin{pmatrix} a_{p+1} & -1 \\ 1 & 0 \end{pmatrix} \dots \begin{pmatrix} a_r & -1 \\ 1 & 0 \end{pmatrix} \begin{pmatrix} a_0 & -1 \\ 1 & 0 \end{pmatrix} \dots \begin{pmatrix} a_{p-1} & -1 \\ 1 & 0 \end{pmatrix} \\
=& \begin{pmatrix} \delta^2 & -\delta(\delta-\lambda)-1 \\ \delta\lambda+1 & -\lambda(\delta-\lambda)-1 \end{pmatrix}.
\end{align*}
Since $a_p=2$, we obtain that the matrix
$$ \begin{pmatrix} a_{p+1} & -1 \\ 1 & 0 \end{pmatrix} \dots \begin{pmatrix} a_r & -1 \\ 1 & 0 \end{pmatrix} \begin{pmatrix} a_0 & -1 \\ 1 & 0 \end{pmatrix} \dots \begin{pmatrix} a_{p-1} & -1 \\ 1 & 0 \end{pmatrix} \begin{pmatrix} a_p & -1 \\ 1 & 0 \end{pmatrix} $$
is
$$  \begin{pmatrix} \delta_i^2+\delta_j\lambda_i-1 & -\delta_i^2 \\ \delta_i\lambda_i+\lambda_i^2+1 & -\delta_i\lambda_i-1 \end{pmatrix}. $$
Its trace is exactly $\delta^2-2$. But recall that the trace of a multiplication is invariant under cyclic permutations of the factors, which shows that
$$ \operatorname{tr} \left( \begin{pmatrix} a_0 & -1 \\ 1 & 0 \end{pmatrix} \dots \begin{pmatrix} a_r & -1 \\ 1 & 0 \end{pmatrix} \right) = \delta^2-2. $$
The left hand side does not depend on the pair $(p, q)$, thus $\delta$ is the same for every pair. This proves Theorem \ref{samedelta} for case (A). 

The case (B) is handled by induction to reduce it to case (A), just like in the proof of Theorem \ref{atmosttwo}. If $a_0=1$, we can blow-down the sequence there. Note that
$$ \begin{pmatrix} a_r & -1 \\ 1 & 0 \end{pmatrix} \begin{pmatrix} 1 & -1 \\ 1 & 0 \end{pmatrix} \begin{pmatrix} a_1 & -1 \\ 1 & 0 \end{pmatrix} = \begin{pmatrix} a_r-1 & -1 \\ 1 & 0 \end{pmatrix} \begin{pmatrix} a_1-1 & -1 \\ 1 & 0 \end{pmatrix}, $$
which shows that the trace remains constant. Also, since the interval $a_{p+1}, \dots, a_{q-1}$ is not affected by blow-downs, this inductively reduces it to case (B). 
\end{proof}

\section{General set-up and the wormhole conjecture} \label{s3}

In this section we will look at singular surfaces together with a smoothing over a smooth analytic curve germ $\D$. This point of view was used in \cite{U16a} under the name of W-surfaces, and it works better for the set-up of the wormhole conjecture. We start by recalling it.

\subsection{W-surfaces and their MMP} \label{s31}

\begin{definition}
A {\em W-surface} is a normal projective surface $X$ together with a proper deformation $(X \subset \X) \to (0 \in \D)$ such that
\begin{enumerate}
\item $X$ has at most Wahl singularities.
\item $\X$ is a normal complex $3$-fold with $K_{\X}$ $\Q$-Cartier.
\item The fiber $X_0$ is reduced and isomorphic to $X$.
\item The fiber $X_t$ is nonsingular for $t\neq 0$.
\end{enumerate}
The W-surface is said to be {\em smooth} if $X$ is nonsingular.
\label{wsurf}
\end{definition}

Various basic properties of W-surfaces are shown in \cite[Section 2]{U16a}. A W-surface $X$ is {\em minimal} if $K_X$ is nef. This is equivalent to $K_{\X}$ nef, as it is shown in \cite[Lemma 2.3]{U16a}. If a  W-surface $X$ is not minimal, then there is an explicit MMP relative to $\D$ which we will review briefly below. The outcomes of this MMP are discussed in \cite[Section 2]{U16a}. We note that invariants such as irregularity, geometric genus, $K
^2$, and Euler topological characteristic are constant for the fibers in a W-surface. An invariant that may not remain constant is the topological fundamental group. We have that $K_X$ ample implies $K_{X_t}$ ample for all $t$, and in this case we may think of a W-surface $X$ as a disk in the KSBA compactification of the moduli space of surfaces of general type with $K^2=K_X^2$ and $\chi=\chi(\O_X)$. 

Let $\sigma \colon \widetilde{X} \to X$ be the minimal resolution of $X$.

\begin{lemma}
Let $X$ be a minimal W-surface such that the minimal resolution of $X$ is ruled. Then $X$ is rational. 
\end{lemma}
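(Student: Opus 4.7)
The plan is to argue by contradiction: suppose $\widetilde{X}$ is ruled but $X$ is not rational, so $\widetilde{X}$ is birational to $C \times \mathbb{P}^1$ for some smooth projective curve $C$ of genus $g(C) \geq 1$. By the Enriques--Kodaira classification, the smooth minimal model of $\widetilde{X}$ is a $\mathbb{P}^1$-bundle $\mathbb{P}(E) \to C$, and composing the contraction $\widetilde{X} \to \widetilde{X}_{\min}$ with this bundle map produces a surjective morphism $\pi \colon \widetilde{X} \to C$ whose general fiber is a smooth rational curve $F$ with $F^2 = 0$ and $K_{\widetilde{X}} \cdot F = -2$.

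The key geometric observation I would then record is that every irreducible rational curve on $\widetilde{X}$ must lie inside a fiber of $\pi$: for any irreducible rational $R \subset \widetilde{X}$, the composition $\mathbb{P}^1 \cong \widetilde{R} \to R \hookrightarrow \widetilde{X} \xrightarrow{\pi} C$ is a morphism from $\mathbb{P}^1$ to a curve of positive genus, hence constant. Applied to the exceptional divisor of $\sigma \colon \widetilde{X} \to X$ (a disjoint union of Wahl chains of smooth rational curves $E_i$), this forces each $E_i$ to be contained in some fiber of $\pi$. In particular, for a general point $c \in C$ lying outside the finite set $\pi\!\left(\bigcup_i E_i\right)$, the fiber $F = \pi^{-1}(c)$ is a smooth $\mathbb{P}^1$ disjoint from every $E_i$.

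The contradiction then comes from the nefness of $K_X$. Using the discrepancy identity \eqref{ecdiscrepancias}, write $\sigma^\ast K_X = K_{\widetilde{X}} - \sum_i k_i E_i$; since $F \cdot E_i = 0$ for all $i$, we get
\[
\sigma^\ast K_X \cdot F \;=\; K_{\widetilde{X}} \cdot F \;=\; -2.
\]
On the other hand, $\sigma^\ast K_X$ is a nef $\mathbb{Q}$-divisor on $\widetilde{X}$: by Mumford's intersection theory on the normal surface $X$ together with the projection formula, for any irreducible curve $\Gamma \subset \widetilde{X}$ either $\Gamma$ is $\sigma$-exceptional and $\sigma^\ast K_X \cdot \Gamma = 0$, or else $\sigma_\ast \Gamma$ is an irreducible curve on $X$ and $\sigma^\ast K_X \cdot \Gamma = K_X \cdot \sigma_\ast \Gamma \geq 0$. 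Hence $\sigma^\ast K_X \cdot F \geq 0$, contradicting the computation above, and we conclude that $C = \mathbb{P}^1$, i.e.\ $X$ is rational. There is no real obstacle here; the only thing to be careful about is the handling of $\sigma^\ast$ via Mumford's convention and the verification that $F$ genuinely avoids the exceptional locus, which is the role of the ``rational curves lie in fibers'' step.
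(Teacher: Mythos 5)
Your proof is correct and follows essentially the same route as the paper's: produce the ruling $\widetilde{X}\to C$ over a positive-genus curve, note that the $\sigma$-exceptional rational curves must lie in fibers, and derive a contradiction from $K_{\widetilde X}\cdot F=-2$ for a general fiber $F$ versus nefness of $K_X$ via the projection formula. Your write-up merely makes explicit the steps the paper leaves implicit.
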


\begin{proof}
Assume that $\widetilde{X}$ is ruled but not rational. Then there is a fibration $\widetilde{X} \to C$ whose general fiber is $\P^1$ and $C$ is a nonsingular projective curve of positive genus. Then all curves in the exceptional divisor of $\sigma$ must be contained in fibers. But if $F$ is a general fiber, then $F \cdot K_{\widetilde{X}}=\sigma(F) \cdot K_X$ and by adjunction $F \cdot K_{\widetilde{X}}=-2$, which is contrary to the assumption $K_X$ nef. 
\end{proof}

When a W-surface $X$ has $K_X$ not nef, then there is a smooth rational curve $C$ with $C \cdot K_X <0$. The cases $C^2 \geq 0$ are analyzed in \cite[Section 2]{U16a}, these are not relevant for the present paper. We assume $C^2<0$. Then the W-surface $X$ defines an extremal neighborhood of type mk1A or mk2A, and we need to run MMP on the $3$-fold family $(X \subset \X) \to (0 \in \D)$. Roughly speaking, in case of a flip we will replace $C$ by a $K$-positive curve $C^+ \subset X^+$ obtaining a new family $(X^+ \subset \X^+) \to (0 \in \D)$, where fibers over $t \neq 0$ remain equal to the fibers of the first family. In this way the surface $X^+$ defines a new W-surface. In case of a divisorial contraction, we will have divisor in $\X$ whose restriction to $X$ is $C$, and to any other fiber is a $(-1)$-curve. The contraction of this divisor gives us a new   
family, and the contraction of $C$ produces a Wahl singularity. The new surface is a W-surface. For all details we refer to \cite[Section 2.4]{U16b} (see also \cite[Section 2]{HTU13}, \cite[Section 2]{U16a}). Below we describe the mk1A and mk2A situations on the surface $X$. Let $(C \subset X) \to (Q \in Y)$ be the contraction of $C$. 

\bigskip 
\noindent
\textbf{mk1A:} In this situation $X$ has one Wahl singularity $\frac{1}{m^2}(1,ma-1)$ where $\frac{m^2}{ma-1}=[e_1,\ldots,e_s]$. Let $E_1, \ldots, E_s$ be the corresponding exceptional curves in $\widetilde{X}$, so that $E_j^2=-e_j$. The proper transform $\tilde{C}$ of $C$ is a smooth rational curve intersecting only one $E_i$ transversally at one point. The curve $C$ contracts to $(Q \in Y)$, which is the cyclic quotient singularity $\frac{1}{\Delta}(1,\Omega)$ where $$\frac{\Delta}{\Omega}=[e_1,\ldots,e_{i-1},e_{i}-1,e_{i+1},\ldots,e_s].$$ We will denote this situation as $[e_1,\ldots, \overline{e_i},\ldots, e_s]$. If we write $K_{\widetilde{X}} \equiv \sigma^*(K_X) + \sum_{j=1}^s (-1+\frac{\delta_j}{m}) E_j$ and $\delta:= \delta_i$, we have $$\tilde{C} \cdot K_{\tilde{X}}=-1+\frac{\delta}{m} + C \cdot K_{X} <  C \cdot K_{X} <0,$$ and $\tilde{C}^2<0$ since it is contracted. In particular, the curve $\tilde{C}$ is a $(-1)$-curve. We have $C \cdot K_X = - \frac{\delta}{m}$ and  $C^2=-\frac{\Delta}{m^2}$.

\bigskip 
\noindent
\textbf{mk2A:} In this situation $X$ has two Wahl singularities $\frac{1}{m_j^2}(1,m_ja_j-1)$ for $j=1,2$ where $\frac{m_1^2}{m_1 a_1-1}=[e_1,\ldots,e_{s_1}]$ and $\frac{m_2^2}{m_2a_2-1}=[f_1,\ldots,f_{s_2}]$. Let $E_1,\ldots,E_{s_1}$ and $F_1,\ldots,F_{s_2}$ be the corresponding exceptional curves with $E_j^2=-e_j$ and $F_j^2=-f_j$. The strict transform $\tilde{C}$ of $C$ is a smooth rational curve intersecting only $E_1$ and $F_1$, transversally at one point each. We have that $$\frac{\Delta}{\Omega}=[f_{s_2},\ldots,f_1,1,e_1,\ldots,e_{s_1}]$$ where $(Q \in Y)$ is $\frac{1}{\Delta}(1,\Omega)$. Let $\delta:=m_2 a_1 - m_1(m_2-a_2)$. Then we have $$\tilde{C}\cdot K_{\tilde{X}}= -1+ \frac{\delta}{m_1 m_2}+ C \cdot K_X < C \cdot K_X  <0$$ and we know $\tilde{C}^2<0$. In particular, the curve $\tilde{C}$ is a $(-1)$-curve. We have $C \cdot K_X = - \frac{\delta}{m_1 m_2}$ and  $C^2=-\frac{\Delta}{m_1^2 m_2^2}$.

\bigskip 

To know whether a W-surface $X$ with $C\cdot K_X <0$ and $C^2 <0$ defines a flip or divisorial contraction, we need to run the Mori algorithm from the numerical data of the mk1A or mk2A extremal neighborhood. We refer to \cite[2.4]{U16b} for details, see also \cite{V20} for examples and computer implementation of Mori's algorithm. A summary with relevant properties for us is the following:

\bigskip 
\noindent
\textbf{Divisorial contraction:} In this case the general fiber of the W-surface $X$ contains a $(-1)$-curve which deforms to $C$. This gives us a divisor on the $3$-fold $\mathcal{X}$, which can be contracted to obtain a new W-surface $Y$. The contraction of $C \subset X$ produces a Wahl singularity $(Q \in Y)$. 

\bigskip 
\noindent
\textbf{Flip:} In this case the contraction of $C$ produces a cyclic quotient singularity $\frac{1}{\Delta}(1,\Omega)= (Q \in Y)$. This singularity admits an extremal P-resolution $(C^+ \subset X^+) \to (Q \in Y)$ so that a suitable W-surface $X^+$ is the flip of the W-surface $X$. The general fibers of the W-surfaces $X$ and $X^+$ are isomorphic.

\bigskip 

If a multiple of $K_X$ has sections, then after finitely many flips and/or divisorial contractions of type mk1A and/or mk2A we will obtain a minimal W-surface (see e.g. \cite[Theorem 5.3]{HTU13}). Otherwise, after finitely many flips and/or divisorial contractions of type mk1A and/or mk2A we will end up with either a smooth deformation of a ruled surface, or a degeneration of $\P^2$ (see e.g. \cite{U16a}).  

\subsection{Wormholes} \label{s32}

The following is the set-up for a wormhole. We take a W-surface $X_1$ with $K_{X_1}$ ample, and we assume that $X_1$ has an extremal P-resolution $(C_1 \subset X_1) \to (Q \in Y)$ over a WW singularity $(Q \in Y)$. In this way, the surface $Y$ is obtained from $X_1$ by contracting one smooth rational curve. As cyclic quotient singularities are rational, the irregularity and geometric genus of both surfaces are equal. By the Nakai--Moishezon criterion, the surface $Y$ is a KSBA stable surface but it does not belong to the same moduli space since $K_Y^2=K_{X_1}^2-\nu^2 C_1^2$ for some $\nu \neq 0$ and $C_1^2<0$ (as it is contracted).

Let $E$ be the exceptional (reduced) divisor of the minimal resolution $\widetilde{X_1} \to X_1$, and let $\widetilde{C_1}$ be the strict transform of $C_1$. We also assume that $$H^2(\widetilde{X_1},T_{\widetilde{X_1}}
^0(-\log(E+\widetilde{C_1})))=0.$$ By \cite[Section 2]{LP07}, this condition can be used to prove that there are no-local-to-global obstructions to deform $X_1$ (which in particular shows existence of W-surfaces $X_1$). If $\widetilde{C_1}$ is a $(-1)$-curve, then $H^2(\widetilde{X_1},T_{\widetilde{X_1}}
^0(-\log(E+\widetilde{C_1})))=0$ is the same as $$H^2(\widetilde{X_1},T_{\widetilde{X_1}}
^0(-\log(E)))=0,$$ and this is the same as requiring $H^2(X_1,T_{X_1}^0)=0$ by \cite[Theorem 2]{LP07}. Let $X_2$ be the surface resulting of contracting the extremal P-resolution in $X_1$, and then partially resolving with the second extremal P-resolution of $(Q \in Y)$. Hence the surface $Y$ is the contraction of a smooth rational curve $C_2$ in $X_2$. So far, we have that $Y$ lives in a different moduli space than $X_1$ and $X_2$, but it is not clear whether $X_1$ and $X_2$ (or its KSBA replacement) live in the same moduli space or not.

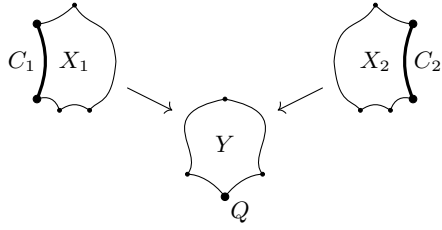
\begin{figure}[htbp]
\centering
\begin{tikzpicture}
\begin{scope}
\draw (-0.5,-0.5) .. controls (-0.35,-0.5) and (-0.15,-0.6) .. (0,-0.8);
\draw (0.5,-0.5) .. controls (0.35,-0.5) and (0.15,-0.6) .. (0,-0.8);
\draw (-0.5,-0.5) .. controls (-0.25,-0.2) and (-1,0.3) .. (0,0.5);
\draw (0.5,-0.5) .. controls (0.25,-0.2) and (1,0.3) .. (0,0.5);
\filldraw (-0.5,-0.5) circle [radius=0.025];
\filldraw (0.5,-0.5) circle [radius=0.025];
\filldraw (0,0.5) circle [radius=0.025];
\filldraw (0,-0.8) circle [radius=0.05];
\end{scope}

\begin{scope}[xshift=-2cm, yshift=1cm]
\draw (-0.2,-0.65) .. controls (-0.05,-0.55) and (0.05,-0.55) .. (0.2,-0.65);
\draw[very thick] (-0.5,-0.5) .. controls (-0.35,-0.15) and (-0.35,0.15) .. (-0.5,0.5);
\draw (0.2,-0.65) .. controls (0.3,-0.5) and (0.5,-0.6).. (0.55,0) .. controls (0.5,0.6) and (0.2,0.4) .. (0,0.75);
\draw (-0.5,-0.5) .. controls (-0.35,-0.45) and (-0.25,-0.5) .. (-0.2,-0.65);
\draw (-0.5,0.5) .. controls (-0.35,0.5) and (-0.15,0.6)..(0,0.75);
\filldraw (-0.5,-0.5) circle [radius=0.05];
\filldraw (-0.5,0.5) circle [radius=0.05];
\filldraw (0,0.75) circle [radius=0.025];
\filldraw (-0.2,-0.65) circle [radius=0.025];
\filldraw (0.2,-0.65) circle [radius=0.025];
\end{scope}

\begin{scope}[xshift=2cm, yshift=1cm, xscale=-1]
\draw (-0.2,-0.65) .. controls (-0.05,-0.55) and (0.05,-0.55) .. (0.2,-0.65);
\draw[very thick] (-0.5,-0.5) .. controls (-0.35,-0.15) and (-0.35,0.15) .. (-0.5,0.5);
\draw (0.2,-0.65) .. controls (0.3,-0.5) and (0.5,-0.6).. (0.55,0) .. controls (0.5,0.6) and (0.2,0.4) .. (0,0.75);
\draw (-0.5,-0.5) .. controls (-0.35,-0.45) and (-0.25,-0.5) .. (-0.2,-0.65);
\draw (-0.5,0.5) .. controls (-0.35,0.5) and (-0.15,0.6)..(0,0.75);
\filldraw (-0.5,-0.5) circle [radius=0.05];
\filldraw (-0.5,0.5) circle [radius=0.05];
\filldraw (0,0.75) circle [radius=0.025];
\filldraw (-0.2,-0.65) circle [radius=0.025];
\filldraw (0.2,-0.65) circle [radius=0.025];
\end{scope}

\draw[->] (-1.3,0.65) -- (-0.7,0.35);
\draw[->] (1.3,0.65) -- (0.7,0.35);

\begin{scriptsize}
\node at (0,-0.1) {$Y$};
\node at (-2,1) {$X_1$};
\node at (2,1) {$X_2$};
\node at (-2.7,1) {$C_1$};
\node at (2.7,1) {$C_2$};
\node at (0.2,-1) {$Q$};
\end{scriptsize}
\end{tikzpicture}
\caption{The three singular surfaces in a wormhole.} \label{whpic}
\end{figure}

\begin{lemma}
We have that $X_2$ defines a W-surface, and $K_{X_1}^2=K_{X_2}^2$ and $\chi(\O_{X_1})=\chi(\O_{X_2})$.
\end{lemma}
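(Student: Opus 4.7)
The plan is to establish the three assertions separately. For (a), that $X_2$ has only Wahl singularities, this is immediate from Definition \ref{extremalP}, since by construction $X_2$ is the source of the second extremal P-resolution $f_2\colon (C_2\subset X_2)\to (Q\in Y)$. For (c), the equality $\chi(\O_{X_1}) = \chi(\O_{X_2})$ follows from the rationality of cyclic quotient and Wahl singularities: both $f_i$ satisfy $R^jf_{i,*}\O_{X_i}=0$ for $j>0$, so the Leray spectral sequence gives $\chi(\O_{X_i})=\chi(\O_Y)$ for both $i=1,2$, and the equality follows.

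For (b), the existence of a $\Q$-Gorenstein smoothing of $X_2$, the strategy is to pass through $Y$. The composed morphism $\pi_1\colon \widetilde{X_1}\to Y$ is a resolution of the cyclic quotient singularity $(Q\in Y)$ whose reduced exceptional divisor is precisely $E+\widetilde{C_1}$. Using that $Y$ has rational (klt) singularities, one has the standard identifications $\pi_{1,*}T^0_{\widetilde{X_1}}(-\log(E+\widetilde{C_1}))=T^0_Y$ and $R^i\pi_{1,*}T^0_{\widetilde{X_1}}(-\log(E+\widetilde{C_1}))=0$ for $i\geq 1$, so the Leray spectral sequence yields $H^2(Y,T^0_Y)\cong H^2(\widetilde{X_1},T^0_{\widetilde{X_1}}(-\log(E+\widetilde{C_1})))=0$ by hypothesis. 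Thus global deformations of $Y$ are unobstructed, and any local first-order deformation of $(Q\in Y)$ can be realized globally. In particular $Y$ admits a $\Q$-Gorenstein smoothing $\Y_2\to \D$ whose local deformation at $Q$ lies in the component of the versal deformation space corresponding to the second extremal P-resolution, and by the simultaneous resolution theory of \cite{KSB88} this smoothing admits a partial resolution $\X_2\to\Y_2$ with central fiber $X_2\to Y$, yielding the desired smoothing $(X_2\subset\X_2)\to(0\in\D)$.

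For (d), I write $K_{X_i}=f_i^\ast K_Y-\nu_i C_i$ with $\nu_i>0$. The projection formula gives $f_i^\ast K_Y\cdot C_i=0$, so $K_{X_i}^2=K_Y^2+\nu_i^2C_i^2$ and $\nu_i=-(K_{X_i}\cdot C_i)/C_i^2$. From the data recorded after Definition \ref{extremalP} one has $K_{X_i}\cdot C_i=\delta_i/(m_{i,1}m_{i,2})$, and a direct pullback computation on the minimal resolution $\widetilde{X_i}$ (inverting the intersection matrix of the chain $[f_{r_2},\dots,f_1,c_i,e_1,\dots,e_{r_1}]$ that resolves $(Q\in Y)$) gives $C_i^2=-\Delta/(m_{i,1}^2m_{i,2}^2)$, using the convention $m_j=1$ at smooth points. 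Combining these, $\nu_i^2 C_i^2=-\delta_i^2/\Delta$, hence $K_{X_i}^2=K_Y^2-\delta_i^2/\Delta$. Since both P-resolutions lie over the same singularity $(Q\in Y)$ and Theorem \ref{samedelta} gives $\delta_1=\delta_2$, we conclude $K_{X_1}^2=K_{X_2}^2$.

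The main obstacle will be step (b), specifically the identification $\pi_{1,*}T^0_{\widetilde{X_1}}(-\log(E+\widetilde{C_1}))=T^0_Y$ together with the higher vanishing, and the subsequent argument that the unobstructed global deformation of $Y$ can actually be arranged to land in the $X_2$-component of the local versal deformation space of $(Q\in Y)$. The former uses rationality of cyclic quotient singularities in a slightly nonstandard log setting, since $\widetilde{C_1}$ becomes exceptional only after composing with $X_1\to Y$; the latter is where the surjectivity of the global-to-local map on first-order deformations---guaranteed once $H^2(Y,T^0_Y)$ vanishes---must be invoked to select the correct smoothing component for $X_2$.
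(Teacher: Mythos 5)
Your proposal is correct in substance but follows a genuinely different route from the paper on both nontrivial points. For the existence of the smoothing of $X_2$, the paper stays on the smooth models: it compares the chains $A_1=E+\widetilde{C_1}$ in $\widetilde{X_1}$ and $A_2=E'+\widetilde{C_2}$ in $\widetilde{X_2}$, which differ by blow-downs to the resolution of $(Q\in Y)$ followed by blow-ups, and transports the hypothesis $H^2(\widetilde{X_1},T^0_{\widetilde{X_1}}(-\log(E+\widetilde{C_1})))=0$ to $H^2(\widetilde{X_2},T^0_{\widetilde{X_2}}(-\log(E'+\widetilde{C_2})))=0$ via the addition/deletion principle for $(-1)$-curves of \cite[Prop.~6]{LP07}, then drops $\widetilde{C_2}$ with the normal-bundle sequence and invokes \cite[Theorem 2]{LP07} to get no-local-to-global obstructions for $X_2$ directly. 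You instead push the vanishing down to $Y$, deduce surjectivity of the global-to-local map on deformations of $(Q\in Y)$, select the KSB component corresponding to the second P-resolution, and simultaneously resolve; this works, but it shifts the burden onto the identifications $\pi_{1,*}T^0_{\widetilde{X_1}}(-\log(E+\widetilde{C_1}))=T^0_Y$ and $R^1\pi_{1,*}T^0_{\widetilde{X_1}}(-\log(E+\widetilde{C_1}))=0$ for a \emph{non-minimal} log resolution of the quotient singularity (Burns--Wahl type local vanishing), which you rightly flag and which should be cited rather than asserted; the paper's route avoids these local computations entirely at the cost of the (elementary) blow-up/blow-down bookkeeping. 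For $K^2$, the paper simply uses $K_X^2=K_{\widetilde{X}}^2+l$ with $l$ the number of exceptional curves and tracks $l$ through the blow-ups and blow-downs relating $A_1$ and $A_2$; your computation $K_{X_i}^2=K_Y^2-\delta_i^2/\Delta$ combined with Theorem \ref{samedelta} is more conceptual and explains \emph{why} the invariant matches (it is exactly the equality of the $\delta$'s), at the cost of needing the formula $C_i^2=-\Delta/(m_{i,1}^2m_{i,2}^2)$ for the exceptional curve of an extremal P-resolution, which the paper only records for flipping curves but which holds by the same pullback computation. Your rationality argument for $\chi$ coincides with the paper's.
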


\begin{proof}
We need to prove existence of a $\Q$-Gorenstein smoothing for $X_2$. We know that $H^2(\widetilde{X_1},T_{\widetilde{X_1}}^0(-\log(E+\widetilde{C_1})))=0$. Let $A_1$ be the chain formed by the exceptional curves of the extremal P-resolution and $\widetilde{C_1}$. Let $A_2$ be the chain formed by the exceptional curves of the the second extremal P-resolution together with the corresponding curve $\widetilde{C_2}$. We know that to obtain $A_2$ we perform blow-downs until reaching the exceptional chain of $(Q \in Y)$, and then we perform blow-ups at that chain to obtain $A_2$. (We may not need blow-downs and/or blow-ups of course.) By the addition/deletion principle of $(-1)$-curves (see e.g. \cite[Prop.6]{LP07}) applied at each blow-down and blow-up , we have that $$H^2(\widetilde{X_1},T_{\widetilde{X_1}}^0(-\log(E+\widetilde{C_1})))=H^2(\widetilde{X_2},T_{\widetilde{X_2}}^0(-\log(E'+\widetilde{C_2})))$$ where $E'$ is the exceptional divisor of the minimal resolution $\widetilde{X_2} \to X_2$. Therefore, by our hypothesis, we have $H^2(\widetilde{X_2},T_{\widetilde{X_2}}^0(-\log(E'+\widetilde{C_2})))=0$. By using the standard short exact sequence $$0 \to T_{\widetilde{X_2}}^0(-\log(E'+\widetilde{C_2})) \to T_{\widetilde{X_2}}^0(-\log(E'))\to \mathcal{N}_{\widetilde{C_2}/ \widetilde{X_2}}\to 0,$$ we have that $H^2(\widetilde{X_2},T_{\widetilde{X_2}}^0(-\log(E')))=0$. Hence, by \cite[Theorem 2]{LP07}, we have that there are no-local-to-global obstructions to deform $X_2$, and so we have a W-surface $X_2$.   

In relation to invariants, since Wahl singularities are rational, we clearly have $\chi(\O_{X_1})=\chi(\O_{X_2})$. As for $K^2$, we note that if $X$ is a normal projective surface with only Wahl singularities and $\widetilde{X} \to X$ is the minimal resolution, then $K_X^2=K_{\widetilde{X}}^2+l$ where $l$ is the amount of exceptional curves. As $A_2$ is obtained by blow-downs and blow-ups on $A_1$ and we contract all curves except one, we obtain that $K_{X_1}^2=K_{X_2}^2$.  
\end{proof}

Therefore we have a W-surface $X_2$ with same invariants. However $K_{X_2}$ may not be nef. 

\begin{conjecture} [Wormhole conjecture] The MMP on the new W-surface finishes in a minimal model and it requires only flips, this is, both punctured W-surfaces live in the same moduli space. \label{wormhole}
\end{conjecture}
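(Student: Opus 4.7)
The plan is to run the MMP on the W-surface $X_2$ relative to $\D$ and to control each step using the combinatorics of WW-type singularities developed in Section \ref{s2}. The initial $K$-negative curve is $C_2$ itself: since $K_{X_1}$ is ample and $K_{X_1}^2=K_{X_2}^2$, the failure of nefness for $K_{X_2}$ must localize on the second extremal P-resolution. By Theorem \ref{samedelta} the $\delta$-invariants of the two extremal P-resolutions at $(Q\in Y)$ agree, and together with Theorem \ref{calcularc} this pins down both $C_2\cdot K_{X_2}=-\delta/(m_1m_2)$ and the self-intersection $\widetilde{C_2}^{\,2}$ of the strict transform in the minimal resolution $\widetilde{X_2}$.

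The argument then splits by the value of $\widetilde{C_2}^{\,2}$. When $\widetilde{C_2}^{\,2}=-1$, Theorem \ref{W1W} gives immediately that $K_{X_2}$ is nef; the MMP is trivial and both W-surfaces live on the same $\overline{M}_{K^2,\chi}$. When $\widetilde{C_2}^{\,2}=-2$ and the extremal P-resolution in $X_2$ has only one singularity, Theorem \ref{2W} shows that MMP on $(X_2\subset \X_2)\to(0\in \D)$ requires only flips; since $Y$ is nonrational and flips preserve the birational class of the general fiber, none of the intermediate W-surfaces can have a ruled minimal resolution (by the lemma in Section \ref{s31}), so the MMP must terminate at a minimal model rather than at a Mori fiber space. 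For the remaining configurations (where $\widetilde{C_2}^{\,2}=-2$ with two singularities, or $\widetilde{C_2}^{\,2}\leq -3$), I would use the explicit discrepancy formula \eqref{ecdiscwahl} together with the shared $\delta$ to bound $\widetilde{C_2}\cdot K_{\widetilde{X_2}}$ from below, aiming to show that $K_{X_2}$ is already nef once the exceptional chains of the second P-resolution are long enough.

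The core combinatorial input one expects to exploit is the polygon-triangulation picture of sequences of WW type from Section \ref{s2}: the two extremal P-resolutions correspond to distinct pairs $(\alpha,\beta)$ and $(\alpha',\beta')$, and the ``distance'' between these pairs in the triangulation should govern how many Mori moves are needed. An induction on this distance — equivalently, on the separation between the two admissible values of $\mu^t$ arising in Equation \eqref{elF} — looks like a natural way to glue the numerical estimates into a termination statement and to propagate the nefness inherited from $K_{X_1}$ across the successive flips.

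The main obstacle will be excluding divisorial contractions at the intermediate steps. After performing a flip, the resulting W-surface has fresh mk1A/mk2A numerical data, and Mori's algorithm must be rerun at each stage; by the description in Section \ref{s31}, a single divisorial contraction would force the general fiber to contain a $(-1)$-curve deforming to the flipping curve, pushing the family off the original KSBA moduli space. The combinatorial potential counterexample alluded to in Section \ref{s6} strongly suggests that some refinement of $\delta$ — perhaps a finer invariant tracking how the exceptional chain of $(Q\in Y)$ is partitioned by the two pairs $(\alpha,\beta)$ — is what really controls whether a divisorial step can occur, and isolating that invariant seems to be the decisive step towards the full conjecture.
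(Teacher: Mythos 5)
There is a genuine gap here, but it is important to be clear about its nature: the statement you are asked to prove is stated in the paper as a \emph{conjecture}, and the paper itself only establishes it in the special cases covered by Theorem \ref{W1W} (middle curve becomes a $(-1)$-curve) and Theorem \ref{2W} (middle curve becomes a $(-2)$-curve with a single singularity), assembled in Corollary \ref{PartialWC} under a nonrationality hypothesis. Your proposal correctly routes those two cases through the corresponding theorems, and your remark that termination in a minimal model follows because a multiple of $K$ has sections is consistent with the paper's MMP summary in Section \ref{s31}. But the remaining configurations --- listed in Section \ref{s6} as Wahl-$m$-Wahl versus Wahl-$m$-Wahl for $m\geq 2$, $m$-Wahl versus Wahl-$(m-1)$-Wahl for $m\geq 3$, and Wahl-$m$ versus $m$-Wahl for $m\geq 3$ --- are not proved in the paper, and your plan for them (``bound $\widetilde{C_2}\cdot K_{\widetilde{X_2}}$ from below \dots aiming to show that $K_{X_2}$ is already nef'') cannot work as stated: Example \ref{counterex} gives combinatorial data of type (c) in which a curve $\Gamma$ is not merely $K_{X_2}$-negative but induces a \emph{divisorial contraction}, which is exactly the failure mode the conjecture must exclude. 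No discrepancy estimate of the kind you describe can rule this out, since the obstruction is not numerical positivity of $K_{X_2}$ but the Mori algorithm's flip-versus-divisorial dichotomy.

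Two further concrete errors. First, you identify $C_2$ as ``the initial $K$-negative curve'' and write $C_2\cdot K_{X_2}=-\delta/(m_1m_2)$; this has the wrong sign and the wrong curve. By the definition of an extremal P-resolution, $K_{X_2}$ is relatively ample over $(Q\in Y)$, so $K_{X_2}\cdot C_2=+\delta/(m_1m_2)>0$. The potential $K$-negative curve is a \emph{different} rational curve $\Gamma$ meeting the exceptional locus, and the paper's entire Section \ref{s4} is devoted to constraining where such a $\Gamma$ can sit (it must meet the P-resolution, cannot meet $C_2$ outside the singular points, must meet the ends of the Wahl chains, etc.); your proposal skips this positional analysis entirely, and the appeal to $K_{X_1}^2=K_{X_2}^2$ does not substitute for it. Second, the proposed induction on a ``distance'' between the two pairs $(\alpha,\beta)$ and $(\alpha',\beta')$ in the triangulation, or on the separation of the roots $\mu^t$ in Equation \eqref{elF}, is not something the paper does and has no demonstrated connection to the number or type of Mori moves; the paper's actual control of the intermediate steps in Theorem \ref{2W} comes from Lemmas \ref{controlcurva2Wahl}--\ref{prelemacompartido}, which track explicitly what kind of extremal P-resolution (Type$(-1)$ or Type$(-2)$) each flip can produce and show that $(-1)$-curves defining later extremal neighborhoods would survive back to $X_1$ and contradict nefness of $K_{X_1}$. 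Without an argument of that shape, your sketch does not close the divisorial-contraction loophole you yourself identify as the main obstacle.
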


A main purpose of this paper is to show that Conjecture \ref{wormhole} is true when $X_1$ is not rational and for a wide range of WW singularities. One may hope that perhaps in the case when $X_1$ not rational we do have that $K_{X_2}$ is nef. We will prove that true in many situations, but the following example shows that it is not always the case.  

\begin{example} \label{ejenriques}
We consider an Enriques surface with the configuration of $(-2)$-curves shown in Figure \ref{enriques1}. This configuration is proved to exist in \cite[2.2]{DRU20}. 

\begin{figure}[htbp]
    \centering
    \begin{tikzpicture}
        \draw (0.0,-0.15) -- (0.0,1.35);
        \draw (-0.15,0.0) -- (0.2,0.0);
        \draw (0.2,0.0) arc (270:360:0.1);
        \draw (0.3,0.1) -- (0.3,1.1);
        \draw (0.3,1.1) arc (0:90:0.1);
        \draw (0.2,1.2) -- (-0.15,1.2);
        \draw (0.15,0.6) -- (1.95,0.6);
        \draw (0.6,0.45) -- (0.6,0.8);
        \draw (0.7,0.9) arc (90:180:0.1);
        \draw (0.7,0.9) -- (1.7,0.9);
        \draw (1.8,0.8) arc (0:90:0.1);
        \draw (1.8,0.8) -- (1.8,0.45);
        
        \begin{tiny}
        \draw (1.2,1.1) node {$A_1$};
        \draw (1.2,0.3) node {$A_2$};
        \draw (0.55,0.0) node {$A_3$};
        \draw (-0.25,0.6) node {$A_4$};
        \end{tiny}
    \end{tikzpicture}
    \caption{Special curves in an Enriques surface}
    \label{enriques1}
\end{figure}
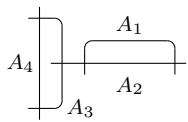

We do five blow-ups to get the configuration in Figure \ref{enriques2}. The exceptional curves $E_1, \dots, E_5$ are indexed according to the order of the blow-ups.

\begin{figure}[htbp]
    \centering
    \begin{tikzpicture}
        \draw (-0.1,0.7) -- (0.7,-0.1);
        \draw (0.5,-0.1) -- (1.3,0.7);
        \draw (1.1,0.7) -- (1.9,-0.1);
        \draw (1.7,-0.1) -- (2.5,0.7);
        \draw (2.3,0.7) -- (3.1,-0.1);
        \draw (2.85,0) -- (3.75,0);
        \draw (3.5,-0.1) -- (4.3,0.7);
        \draw (2.55,0.3) -- (4.05,0.3);
        \draw (0.0,0.45) -- (0.0,1.1);
        \draw (0.1,1.2) arc (90:180:0.1);
        \draw (0.1,1.2) -- (2.0,1.2);
        \draw (2.1,1.1) arc (0:90:0.1);
        \draw (2.1,1.1) -- (2.1,0.15);
        
        \begin{tiny}
        \draw (0.2,0.1) node {$E_4$};
        \draw (1.0,0.1) node {$E_3$};
        \draw (1.4,0.1) node {$A_4$};
        \draw (2.2,0) node {$A_3$};
        \draw (2.6,0.05) node {$A_2$};
        \draw (3.3,-0.2) node {$E_1$};
        \draw (3.3,0.5) node {$E_2$};
        \draw (4.0,0.05) node {$A_1$};
        \draw (1.05,1.4) node {$E_5$};
        \end{tiny}
    \end{tikzpicture}
    \caption{After five blow-ups.}
    \label{enriques2}
\end{figure}
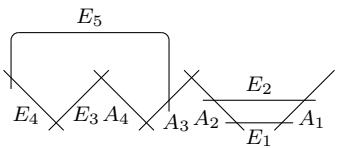

We have $E_1^2=E_2^2=E_5^2=-1, E_3^2=E_4^2=-2, A_4^2=-3, A_1^2=A_2^2=-4, A_3^2=-5$.The chain of curves 
\[ E_4-E_3-A_4-A_3-A_2-E_1-A_1, \] after contracting $E_1$ corresponds to the minimal resolution of the singularity $\frac{1}{235}(1, 169)$, since $[2, 2, 3, 5, 3, 3] = \frac{235}{169}$. This is a WW singularity, which define surfaces $X_1$ and $X_2$. In both cases we have W-surfaces $X_1$ and $X_2$ because we can prove they do not have obstructions (see \cite[Lemma 2.4]{DRU20}).

If we contract $A_1$ and $A_2-A_3-A_4-E_3-E_4$ to singularties $P_1$ and $P_2$, then we obtain the surface $X_1$ with the extremal P-resolution \[ [2, 2, 3, 5, 4]-1-[4]. \] It can be proved that a general $X_1$ has $K_{X_1}$ ample.

If we contract $E_3-A_4-A_3-A_2-A_1-E_1$ to a point $P_3$, then we get the surface $X_2$ with the extremal P-resolution \[ 2-[2, 3, 5, 3, 3]. \] But in this case we have $K \cdot E_5 = -\frac{1}{13}$. The curve $E_5$ induces a mk1A neighborhood. The numerical data for this mk1A is \[ [2, 3, 5-1, 3, 3]=\frac{129}{79}, \] which is not a Wahl singularity, and so this is a flipping mk1A. The extremal P-resolution which does the flip is \[ [2, 3, 5, 3] - 1 - [2, 5, 3].\] This is in Figure \ref{enriques3}, where $F_1, F_2, F_3$ are the new curves from the new blow-ups. We note that $E_2^2=E_4^2=-1$ and these are the only curves that could be negative for the canonical divisor $K$ of the new singular surface.  

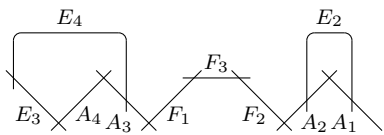
\begin{figure}[htbp]
    \centering
    \begin{tikzpicture}
        \draw (-0.1,0.7) -- (0.7,-0.1);
        \draw (0.5,-0.1) -- (1.3,0.7);
        \draw (1.1,0.7) -- (1.9,-0.1);
        \draw (1.7,-0.1) -- (2.5,0.7);
        \draw (2.25,0.6) -- (3.15,0.6);
        \draw (2.9,0.7) -- (3.7,-0.1);
        \draw (3.5,-0.1) -- (4.3,0.7);
        \draw (4.1,0.7) -- (4.9,-0.1);
        \draw (0.0,0.45) -- (0.0,1.1);
        \draw (0.1,1.2) arc (90:180:0.1);
        \draw (0.1,1.2) -- (1.4,1.2);
        \draw (1.5,1.1) arc (0:90:0.1);
        \draw (1.5,1.1) -- (1.5,0.15);
        \draw (3.9,0.15) -- (3.9,1.1);
        \draw (4.0,1.2) arc (90:180:0.1);
        \draw (4.0,1.2) -- (4.4,1.2);
        \draw (4.5,1.1) arc (0:90:0.1);
        \draw (4.5,1.1) -- (4.5,0.15);
        
        \begin{tiny}
        \draw (0.2,0.1) node {$E_3$};
        \draw (1.0,0.1) node {$A_4$};
        \draw (1.4,0) node {$A_3$};
        \draw (2.2,0.1) node {$F_1$};
        \draw (2.7,0.8) node {$F_3$};
        \draw (3.2,0.1) node {$F_2$};
        \draw (4.0,0) node {$A_2$};
        \draw (4.4,0) node {$A_1$};
        \draw (0.75,1.4) node {$E_4$};
        \draw (4.2,1.4) node {$E_2$};
        \end{tiny}
    \end{tikzpicture}
    \caption{After the flip.}
    \label{enriques3}
\end{figure}

However we compute $K \cdot E_4= \frac{1}{4}, K \cdot E_2=\frac{3}{5}$. In this way $K$ is now nef. We only used one flip to obtain the nef model, and we have a wormhole in the moduli space of $\Z/2$-Godeaux surfaces.
\end{example}

We now prove a relevant reduction step towards Conjecture \ref{wormhole}. Let us consider W-surfaces $X_1$ and $X_2$ as in Conjecture \ref{wormhole}. Let $(X_1 \subset \X'_1) \to (0 \in \D)$ be a partial $\Q$-Gorenstein deformation which keeps the distinguished extremal P-resolution in all fibers but smooths all other Wahl singularities. This is possible since we have $H^2(\widetilde{X_1},T_{\widetilde{X_1}}
^0(-\log(E+\widetilde{C_1})))=0$. We denote the general fiber by $X'_1$. Let $(X_2 \subset \X'_2) \to (0 \in \D)$ be the $\Q$-Gorenstein deformation obtained by first contracting the extremal P-resolution of all fibers in $(X'_1 \subset \X'_1) \to (0 \in \D)$ (where this deformation is trivial), and then gluing the other extremal P-resolution. The general fiber is denoted by $X'_2$. Since we do not have local-to-global obstructions, there are W-surfaces $X'_1$ and $X'_2$ as in the set-up of Conjecture \ref{wormhole}.

\begin{lemma}
If Conjecture \ref{wormhole} is true for the W-surfaces $X'_1, X'_2$, then it is also true for the W-surfaces $X_1, X_2$.
\label{reduction}
\end{lemma}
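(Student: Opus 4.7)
The plan is to show that the relative MMP on $\X_2 \to \D$ mirrors, step by step, the relative MMP on $\X'_2 \to \D$ at the level of central fibers. Both 3-folds share the same central fiber $X_2$, differing only in which Wahl singularities they smooth: $\X_2$ smooths all of them, while $\X'_2$ keeps those of the distinguished extremal P-resolution. Consequently, the extremal neighborhoods encountered at each step of the MMP are the same on both sides.

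The crucial ingredient is that the classification of an extremal neighborhood as mk1A or mk2A, and the determination (via the Mori algorithm as reviewed in \cite[\textsection 2.4]{U16b}) of whether the corresponding contraction is a flip or a divisorial contraction, depend only on the numerical data of the central-fiber curve $C$ and its adjacent Wahl singularities. In particular, both determinations are intrinsic to the central fiber $X_2$, and so are identical for $\X_2$ and $\X'_2$. Therefore, assuming that the MMP for $\X'_2$ uses only flips, the same holds for the MMP of $\X_2$. Each flip is a local 3-fold surgery near $C$, replacing the curve and its adjacent Wahl singularities in the central fiber by a new exceptional curve and new Wahl singularities. Since this local replacement is the same in both families, after each flip the two new 3-folds still share the same central fiber, preserving the set-up for induction.

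By the general theory of MMP for families of general-type surfaces, both processes terminate in minimal 3-folds $\overline{\X_2}$ and $\overline{\X'_2}$. By the parallel construction they share the same central fiber $\overline{X_2}$, which by assumption has nef canonical class. By \cite[Lemma 2.3]{U16a}, nefness of the canonical class of a W-surface on the central fiber is equivalent to nefness on the 3-fold, so $\overline{\X_2}$ is indeed a minimal W-surface reached using only flips. The smooth general fiber of $\overline{\X_2}$ is then a minimal surface of general type with the same $K^2$ and $\chi$ as the smoothing of $X_1$ (these invariants being preserved under $\Q$-Gorenstein deformations and flips, as in the preceding lemma), and hence lies in the same moduli space $\overline{M}_{K^2,\chi}$. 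The main potential obstacle is confirming that the flip/divisorial dichotomy and the flip surgery really depend only on central-fiber data; this is ensured by the explicit description of mk1A and mk2A extremal neighborhoods recalled in Section \ref{s31}.
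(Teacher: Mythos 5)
There is a genuine gap here, rooted in a misidentification of the objects involved. You assert that the two $3$-folds ``share the same central fiber $X_2$,'' but in the paper's set-up the family $(X_2 \subset \X'_2) \to (0 \in \D)$ with central fiber $X_2$ is only a \emph{partial} $\Q$-Gorenstein deformation: its general fiber $X'_2$ is still singular (it keeps the Wahl singularities of the distinguished extremal P-resolution). The hypothesis of Lemma \ref{reduction} is that Conjecture \ref{wormhole} holds for the \emph{W-surfaces} $X'_1, X'_2$, i.e.\ for smoothings whose central fiber is the surface $X'_2$, not $X_2$. So the MMP about which you are allowed to assume something runs on a family whose central fiber is $X'_2$, and $X'_2$ differs from $X_2$ exactly by the extra Wahl singularities of $X_1$ lying away from the distinguished extremal P-resolution. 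Your key claim that ``the extremal neighborhoods encountered at each step of the MMP are the same on both sides'' therefore does not hold: a $K_{X_2}$-negative curve on $X_2$ may pass through one of those extra Wahl singularities and has no counterpart on $X'_2$, so the two MMPs need not see the same curves, the same mk1A/mk2A data, or the same flip/divisorial dichotomy. Ruling out trouble coming from those extra singularities is precisely what the lemma is for, so the step-by-step mirroring argument assumes what is to be proved.

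The paper's actual proof is deformation-theoretic and much shorter: by \cite[Section 3]{H12} the $\Q$-Gorenstein deformation spaces of $X_2$ and $X'_2$ are smooth, and the deformations of $X'_2$ are contained in those of $X_2$ (because $X'_2$ is a partial $\Q$-Gorenstein smoothing of $X_2$). Consequently the general fibers of the W-surfaces $X_2$ and $X'_2$ are $\Q$-Gorenstein deformation equivalent, and the conclusion of the conjecture --- the MMP reaches a minimal model using only flips, so that the punctured families land in the same moduli space --- transfers from the W-surface $X'_2$ to the W-surface $X_2$. If you want to keep the spirit of your write-up, the ``mirroring'' step must be replaced by this smoothness/containment argument; the part of your proposal that correctly records that flips do not change the general fiber and that the relevant invariants are preserved can then be reused to conclude.
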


\begin{proof}
The point is that the $\Q$-Gorenstein deformation space of $X_i$'s and $X'_i$'s is smooth (see \cite[Section 3]{H12}). We have that the W-surface $X'_2$ has minimal model and requires only flips to obtain the KSBA replacement. Then the W-surface $X_2$ must satisfy the same since its $\Q$-Gorenstein deformation space is smooth and contains the one of $X'_2$.
\end{proof}

All in all, to verify that Conjecture \ref{wormhole} is true, we only need to verify it for W-surfaces $X_1$ which contain an extremal P-resolution over a WW singularity, so that it contains no other Wahl singularities out of this extremal P-resolution. That is the importance of Theorem \ref{W1W} and Theorem \ref{2W}, which will be proved in the next two sections. 

\section{Proof of Theorem \ref{W1W}} \label{s4}

In this section we essentially prove that the wormhole conjecture is valid for non rational surfaces with nef canonical class, and with an extremal P-resolution whose middle curve becomes a $(-1)$-curve in the minimal resolution. So the only possible counterexamples might come from extremal P-resolutions where the proper transform of the exceptional curve becomes a $(-m)$-curve with $m\geq 2$. At first they seem to be too special over a wormhole singularity, but they turn out to be chaotic. In the next section we manage to prove it only for $m=2$ in a special situation.   

Throughout this section we assume the hypothesis of Theorem \ref{W1W}, which we now recall. Let $Y$ be a normal projective surface with one cyclic quotient singularity $(Q \in Y)$, which is smooth everywhere else. We assume that the minimal resolution of $Y$ is not ruled, and that $Q$ is a wormhole singularity, i.e. it admits two extremal P-resolutions $f_i^+ \colon (C_i \subseteq X_i) \to (Q \in Y), i=1, 2$. In addition we assume:

\begin{itemize}
\item The strict transform in the minimal resolution of $X_2$ of the exceptional curve $C_2$ for the extremal P-resolution in $X_2$ is a $\P^1$ with self-intersection $-1$. 
\item The canonical class $K_{X_1}$ is nef.
\item Both surfaces $X_i$ admit $\Q$-Gorenstein smoothings $(X_i \subseteq \X_i) \to (0 \in \D)$, i.e. they are W-surfaces.
\end{itemize}

We want to prove that $K_{X_2}$ is nef. This implies that the family $(X_i \subseteq \X_i) \to (0 \in \D)$ has nef canonical class (see e.g. \cite[Sect.2]{U16a}).
\bigskip

Let $\frac{1}{\Delta}(1, \Omega)=(Q \in Y)$, and $\frac{\Delta}{\Omega} = [f_s, \dots, f_1] - 1 - [e_1, \dots, e_r] $ be the numerical data of the extremal P-resolution $X_2 \to Y$. Let $\sigma \colon \tilde{X_2} \to X_2$ be the minimal resolution of $X_2$ over the singularities $P_1$ and $P_2$. Let $E_i,F_j$ be the $\P^1$'s which resolve them respectively. In this way we have $E_i^2=-e_i$ and $F_j^2=-f_j$.

Let us assume that $K_{X_2}$ is not nef. By hypothesis we have existence of $(X_2 \subseteq \X_2) \to (0 \in \D)$, and so we know that there is a curve $\Gamma \simeq \P^1$ in $X_2$ such that $K_{X_2} \cdot \Gamma <0$ (see e.g. \cite[Sect. 2]{U16a}). Since $X_2$ is not ruled, we can assume that $\Gamma^2<0$, and $(\Gamma \subset X_2 \subseteq \X_2) \to (Q \in Y \subset \mathcal{Y})$ is an extremal neighborhood of type mk1A or mk2A. In this way, the curve $\Gamma$ has a very special position in relation to the singularities of $X_2$. Also the assumption that $K_{X_1}$ is nef puts more constraints, which can be summarized as:

\begin{itemize}
\item Necessarily $\Gamma$ intersects $(f_1^+)^{-1}(Q)$, since otherwise $\Gamma$ would be negative for $K_{X_1}$.

\item The curve $\Gamma$ cannot intersect $C_2$ out of the singularities $P_1, P_2$, since otherwise we can contract $\Gamma$ in $\tilde{X_2}$ producing a surface $X'$ and a curve $C_2$ with $K_{X'} \cdot C_2 <-1$. But this is contrary to our assumption that $\tilde{X_2}$ is not ruled (and so it has a minimal model).

\item As we have an mk1A or mk2A situation, the curve $\Gamma$ in $\tilde{X_2}$ can touch one Wahl chain transversally at one point, or both chains transversally at the ends of each. The first option is not possible since either it becomes a negative curve for $K_{X_1}$ or we have contradiction with the not ruled assumption.
\end{itemize}

Therefore the curve $\Gamma$ can only intersect the $F_1,F_s$ and the $E_1, E_r$ in a mk2A situation (four possibilities). In the next arguments, we will strongly use the discrepancies of the two Wahl singularities. We recall that $$K_{X_2} \cdot \Gamma = \Big( K_{\tilde{X_2}} - \sum_a k_a E_a - \sum_b l_b F_b \Big) \cdot \Gamma=-1-k_i-l_j $$ where $k_a,l_b$ are the discrepancies of the corresponding divisors, and $i=1,r$ and $j=1,s$ are the only possibilities. We can easily discard two of the four possibilities:

\begin{itemize}
\item If $\Gamma$ intersects $E_1$ and $F_1$, then $K_{X_1} \cdot \Gamma = K_{X_1} \cdot C_2>0$ because both curves become $(-1)$-curves in the minimal resolution. 

\item If $\Gamma$ intersects $E_r$ and $F_s$, then the extremal P-resolution on $X_1$ must have two singularities (since otherwise $\Gamma$ will intersect only once the singularity, and so it will be negative for $K_{X_1}$). In this way, and as in the proof of Theorem \ref{calcularc}, it follows that $\Gamma$ must intersect the extreme curves of the two chains from the minimal resolution of $X_1$. By the same result, we know that in this case the strict transform of $C_1$ in the minimal resolution of $X_1$ is a $(-1)$-curve. We also know that discrepancies at the end of a Wahl chain add up $-1$. Therefore we obtain $$ K_{X_1} \cdot \Gamma + K_{X_1} \cdot C_1=0 $$ but $K_{X_1} \cdot C_1>0$, and so a contradiction.
\end{itemize}

The third and fourth possibilities are symmetric, so without loss of generality we assume that $\Gamma$ is intersecting $E_1$ and $F_s$ as in Figure \ref{curvagammaenres}.

\begin{figure}[htbp]
    \centering
    \begin{tikzpicture}
        \draw (-0.1,0.7) -- (0.7,-0.1);
        \draw[dotted] (0.5,-0.1) -- (1.3,0.7);
        \draw (1.1,0.7) -- (1.9,-0.1);
        \draw (1.65,0) -- (2.55,0);
        \draw (2.3,-0.1) -- (3.1,0.7);
        \draw[dotted] (2.9,0.7) -- (3.7,-0.1);
        \draw (3.5,-0.1) -- (4.3,0.7);
        \draw (0,0.45) -- (0,1.1);
        \draw (0.1,1.2) arc (90:180:0.1);
        \draw (0.1,1.2) -- (2.6,1.2);
        \draw (2.7,1.1) arc (0:90:0.1);
        \draw (2.7,1.1) -- (2.7,0.15);
        
        \begin{scriptsize}
        \draw (0.2,0) node {$F_s$};
        \draw (1.4,0) node {$F_1$};
        \draw (2.1,-0.3) node {$C_2$};
        \draw (2.8,-0.1) node {$E_1$};
        \draw (4.0,0) node {$E_r$};
        \draw (1.35,1.35) node {$\Gamma$};
        \end{scriptsize}
    \end{tikzpicture}
    \caption{The potential bad curve $\Gamma$ in $\tilde{X_2}$.}
    \label{curvagammaenres}
\end{figure}
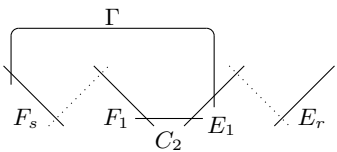

We note that we must have $r>1$, since $r=1$ would give a $\Gamma$ intersecting $E_r=E_1$ and $F_s$, but this case was ruled out above.

\begin{proposition} \label{propcompartida}
Let $Z$ be a normal projective surface, $P_1, P_2 \in Z$ the only singular points, which are Wahl singularities, and let $\sigma \colon \tilde{Z} \to Z$ be the minimal resolution of $Z$, which is not ruled. Assume that there exists $(-1)$-curves $C$ and $\Gamma$, such that on the minimal resolution we have the configuration given by Figure \ref{curvagammaenres} (taking $C=C_2$), where $E_1, \dots, E_r$ and $F_1, \dots, F_s$ are the resolutions of $P_1$ and $P_2$. Assume also that $r>1$. Then, we cannot have simultaneously $K_Z \cdot C>0$ and $K_Z \cdot \Gamma <0$. 
\end{proposition}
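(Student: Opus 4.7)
Assume for contradiction that $K_Z\cdot C > 0$ and $K_Z\cdot\Gamma < 0$. The plan is to translate these into inequalities on the Wahl parameters $(m_i,a_i)$ of $P_i = \frac{1}{m_i^2}(1,m_ia_i-1)$, dispatch the easy case $s=1$, and for $s\ge 2$ derive a contradiction with non-ruledness by iteratively contracting $(-1)$-curves.

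First, since the discrepancies at the two ends of a Wahl chain sum to $-1$, the discrepancies at $E_1$, $F_1$, and $F_s$ are $-1+a_1/m_1$, $-1+a_2/m_2$, and $-a_2/m_2$ respectively. Combined with $K_{\tilde Z}\cdot\tilde C = K_{\tilde Z}\cdot\tilde\Gamma = -1$ by adjunction (since both are $(-1)$-curves), the pull-back relation $K_{\tilde Z} = \sigma^{\ast}K_Z + \sum k_iE_i + \sum l_jF_j$ gives
\[
K_Z\cdot C = 1-\tfrac{a_1}{m_1}-\tfrac{a_2}{m_2},\qquad K_Z\cdot\Gamma = \tfrac{a_2}{m_2}-\tfrac{a_1}{m_1}.
\]
The sign hypotheses then force $\tfrac{a_2}{m_2} < \tfrac{a_1}{m_1} < 1-\tfrac{a_2}{m_2}$, so in particular $a_2/m_2 < 1/2$. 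If $s=1$, then by the Wahl recipe \eqref{procWahl} the only length-one Wahl chain is $[4]$, so $(m_2,a_2)=(2,1)$ and $a_2/m_2=1/2$, an immediate contradiction.

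So assume $s\ge 2$. Since $\tilde C$ and $\tilde\Gamma$ are disjoint $(-1)$-curves, I would contract both to obtain a smooth, still non-ruled, surface $W$. On $W$ the strict transforms assemble into the cycle
\[
D = E_1 + F_1 + F_2 + \cdots + F_s
\]
with $E_1^2=-e_1+2$, $F_1^2=-f_1+1$, $F_s^2=-f_s+1$, and interior $F_j^2=-f_j$, together with the tail $E_2,\ldots,E_r$ attached at $E_1$. Using the Wahl identity $\sum f_j = 3s+1$ one checks $K_W\cdot D + D^2 = 0$, so $p_a(D) = 1$. The key observation is that the bound $a_2/m_2<1/2$, combined with the classification in \eqref{procWahl} and the $\delta$-recursion that defines $(m_2,a_2)$, forces the last construction step for $[f_1,\ldots,f_s]$ to be of the form $[g_1,\ldots,g_{s-1}]\mapsto [g_1+1,g_2,\ldots,g_{s-1},2]$ (the other rule is checked to always yield $a/m>1/2$). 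Consequently $f_s=2$, hence $F_s^2=-1$ on $W$, and I would contract this $(-1)$-curve to produce a new smooth non-ruled surface with a shorter cycle in which the new $F$-chain is itself a Wahl chain of length $s-1$.

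The main obstacle is to show that the resulting iteration always terminates by producing a smooth rational curve of self-intersection $\ge 0$, contradicting that $W$ is non-ruled. At each step the contraction of the current $F_s$-analogue raises $E_1^2$ by $+1$, while the Wahl dichotomy recurs on the shortened $F$-chain: after finitely many steps either $E_1^2$ becomes non-negative (done), or $E_1$ becomes a $(-1)$-curve whose contraction pushes the last surviving $F$-neighbour to non-negative self-intersection because it meets $E_1$ with multiplicity $\ge 2$ accumulated from the previous contractions. Carrying this out rigorously requires an induction on $s$ in which at every stage the inequalities $a_2/m_2<a_1/m_1<1-a_2/m_2$ and the Wahl identity conspire to prevent the cycle from collapsing to a pure $(-2)$-configuration (which could otherwise contract to an elliptic singularity compatible with non-ruledness), so a curve of non-negative self-intersection must appear and yield the desired contradiction.
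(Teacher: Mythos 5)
Your reduction of the hypotheses to the inequalities $a_2/m_2 < a_1/m_1 < 1 - a_2/m_2$ is correct, as is the dispatch of $s=1$ and the deduction that $f_s=2$ (the dichotomy ``append a $2$ at the end $\Leftrightarrow a/m<1/2$'' does follow from the $\delta$-recursion). Up to that point you have recovered, by slightly different bookkeeping, the paper's first two lemmas ($s>1$ and $f_s=2$); note however that the paper also pins down $e_r=2$, which you never address and which is needed to control the discrepancy at $E_1$.

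The gap is the entire second half. You explicitly defer ``the main obstacle'', namely that the iterated contraction of $(-1)$-curves in the cycle terminates in a curve of non-negative self-intersection, and this is not a technicality: contractions alone cannot close the argument. For example, when both chains are of the paper's type $B$ --- say $[f_1,\dots,f_s]=[p,f_2,\dots,f_t,2,\dots,2]$ with $p-2$ trailing $2$'s, $p\ge 3$ --- contracting $\tilde\Gamma$, $\tilde C$ and the trailing $(-1)$-curves leaves $E_1^2=-e_1+p$, so as soon as $e_1\ge p+2$ every component of the cycle has self-intersection $\le -2$, the process stalls, and no singular curve of negative canonical degree appears. In that case the paper obtains the contradiction not from contractions but from the refined discrepancy bounds of Lemma \ref{cotadiscrepancias} ($\tfrac{1}{b_1}<\mu<\tfrac{1}{b_1-1}$ for type $B$, exact values for type $M$), which convert the non-ruledness inequality $e_1\ge f_1+1$ into $K_Z\cdot\Gamma>0$; your proposal uses only the crude bound $a_2/m_2<1/2$ and so cannot reach this conclusion. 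In addition, two intermediate claims are inaccurate as stated: after contracting $F_s$ the remaining $F$-part of the cycle is not a Wahl chain of length $s-1$ but a Wahl chain with an end decremented, so the induction does not recur in the form you set up; and the assertion that $E_1$ eventually meets the surviving $F$-neighbour with multiplicity $\ge 2$ is unsubstantiated. The paper instead runs a case analysis on the types of the two chains ($BB$, $MB$, $BM$, $MM$, then $MBM$, $MBB$), in each case combining a specific contraction sequence with the discrepancy estimates; some version of that analysis appears unavoidable.
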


This proposition allows us to finish the proof of Theorem \ref{W1W}. It will be used also in the next section.

\begin{proof} (of Theorem \ref{W1W})
Assume that $K_{X_2}$ is not nef. As we discussed above, we get a rational curve $\Gamma$, which is negative on $K_{X_2}$ and positive on $K_{X_1}$, and which gives us Figure \ref{curvagammaenres} on the minimal resolution (with $r>1$). We then have that $K_{X_2}\cdot C_2>0$ and $K_{X_2} \cdot \Gamma<0$, which contradicts Proposition 4.1 with $Z=X_2$ and $C=C_2$.
\end{proof}

The proof of Proposition \ref{propcompartida} will be achieved by means of the next few lemmas.

\begin{lemma}
We must have $s>1$. 
\end{lemma}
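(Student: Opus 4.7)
The plan is to argue by contradiction and assume $s=1$. Since any Wahl chain of length one must be $[4]$, this forces $P_2 = \frac{1}{4}(1,1)$ with $m_2=2$, $a_2=1$, $f_1=4$, and the single curve $F_s=F_1$ has discrepancy $l_s=-\tfrac12$. By hypothesis the middle curve $C$ of the extremal P-resolution is a $(-1)$-curve on $\tilde Z$, so $c=1$ in the extremal-P-resolution notation $[f_s,\dots,f_1]-c-[e_1,\dots,e_r]$, and $\Gamma$ is a $(-1)$-curve meeting $F_1$ and $E_1$ transversally once each.

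Next I would convert the two sign assumptions into arithmetic inequalities on the remaining Wahl parameter $a_1$ of $P_1$. The formula $K_Z\cdot C = \delta/(m_1m_2)$ with $\delta = cm_1m_2 - m_1a_2 - m_2a_1$ reduces, under the substitutions above, to $K_Z\cdot C = (m_1-2a_1)/(2m_1)$; hence $K_Z\cdot C>0$ is equivalent to $2a_1<m_1$. For $\Gamma$, adjunction on the $(-1)$-curve $\Gamma\subset\tilde Z$ gives $K_{\tilde Z}\cdot\Gamma=-1$, and writing $K_{\tilde Z} = \sigma^*K_Z + \sum k_i E_i + \sum l_j F_j$ yields
\[
K_Z\cdot\Gamma \;=\; -1 - k_1 - l_s \;=\; \tfrac12 - \frac{a_1}{m_1},
\]
where I use $l_s=-\tfrac12$ together with the identity $\delta_1 = a$ for the first discrepancy number of any Wahl chain (easily verified by a short induction on the $L/R$ generation from $[4]$, since both operations preserve the relation). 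Therefore $K_Z\cdot\Gamma<0$ is equivalent to $2a_1>m_1$.

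The two resulting conditions $2a_1<m_1$ and $2a_1>m_1$ are plainly incompatible, and the borderline $2a_1=m_1$ is ruled out by $\gcd(a_1,m_1)=1$ together with $m_1\ge 3$ (which follows from $r>1$, since $r=1$ is the only way to get $m_1=2$). This contradiction gives the lemma. The main point that requires care is purely bookkeeping: one must match the indexing $F_s,\dots,F_1$ used in the extremal-P-resolution notation with the orientation of the underlying Wahl chain, so that the discrepancy $l_s=-\tfrac12$ is picked up at the end of the $F$-chain that $\Gamma$ actually meets, and the formula $\delta_1=a_1$ for the $E$-chain is applied at the end adjacent to $\Gamma$, consistently with Figure~\ref{curvagammaenres}.
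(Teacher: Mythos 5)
Your proof is correct and rests on the same observation as the paper's one-line argument: when $s=1$ the curves $\Gamma$ and $C$ meet exactly the same exceptional curves ($E_1$ and $F_1=F_s$) transversally once each and are both $(-1)$-curves, so $K_Z\cdot\Gamma=-1-k_1-l_1=K_Z\cdot C>0$, contradicting $K_Z\cdot\Gamma<0$. Indeed the two expressions you derive, $K_Z\cdot C=(m_1-2a_1)/(2m_1)$ and $K_Z\cdot\Gamma=\tfrac12-a_1/m_1$, are literally equal, so your pair of incompatible inequalities is just this identity made explicit via the Wahl parameters (and the borderline case $2a_1=m_1$ needs no separate treatment, since both inequalities are already strict).
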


\begin{proof}
If $s=1$, then $K_{Z} \cdot \Gamma = K_{Z} \cdot C>0$.
\end{proof}

\begin{lemma}
We must have $e_r=f_s=2$.
\end{lemma}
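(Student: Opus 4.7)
The approach is to apply the discrepancy formula~\eqref{ecdiscwahl} to the two $(-1)$-curves $C$ and $\Gamma$ on $\tilde{Z}$, and to combine this with a contractibility argument that invokes the non-ruledness of $\tilde{Z}$ in order to handle the $e_r$ half.

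First I would prove $f_s=2$ by a pure discrepancy computation. Since $C,\Gamma\simeq\P^1$ with self-intersection $-1$, adjunction yields $K_{\tilde{Z}}\cdot C = K_{\tilde{Z}}\cdot \Gamma = -1$. The proper transform of $C$ meets the exceptional locus of $\sigma$ only at $E_1$ and $F_1$ (transversally, once each), and $\Gamma$ only at $E_1$ and $F_s$. Using~\eqref{ecdiscwahl} and $\sigma^{\ast}K_Z\cdot C = K_Z\cdot C$ (and similarly for $\Gamma$), one obtains
\[
K_Z\cdot C \;=\; 1-\tfrac{\delta_1^E}{m_1}-\tfrac{\delta_1^F}{m_2},\qquad K_Z\cdot \Gamma \;=\; 1-\tfrac{\delta_1^E}{m_1}-\tfrac{\delta_s^F}{m_2},
\]
where $\delta^E,\delta^F$ denote the discrepancy numerators of the $E$- and $F$-chains and $m_1,m_2$ their Wahl indices. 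Combining $K_Z\cdot C>0$ and $K_Z\cdot \Gamma<0$ gives $\delta_s^F>\delta_1^F$, and together with $\delta_1^F+\delta_s^F=m_2$ this forces $\delta_s^F>m_2/2$. By the previous lemma $s\ge 2$, so the Wahl procedure~\eqref{procWahl} applies; an easy induction along~\eqref{procWahl} shows that in a Wahl chain of length at least $2$ exactly one of $f_1,f_s$ equals $2$, and that $\delta_s/m>1/2$ precisely when $f_s=2$ (whereas $\delta_s/m<1/2$ when $f_1=2$). Hence $f_s=2$.

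Next I would prove $e_r=2$ by contradiction, using the hypothesis that $\tilde{Z}$ is not ruled. Assume $e_r\ge 3$. Since $r\ge 2$ and exactly one of $e_1,e_r$ equals $2$ in a Wahl chain of length at least $2$, this forces $e_1=2$; in particular $E_1^2=-2$ on $\tilde{Z}$. Contract $\Gamma$ on $\tilde{Z}$: this is a smooth blow-down, and since $E_1\cdot \Gamma=1$ the self-intersection of $E_1$ becomes $-e_1+1=-1$. Then contract $E_1$: since $C\cdot E_1=1$, the self-intersection of $C$ becomes $-1+1=0$. The resulting surface is smooth and contains $C\simeq\P^1$ as an irreducible curve with $C^2=0$, so it is ruled. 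Being birational to $\tilde{Z}$, this contradicts the non-ruledness of $\tilde{Z}$, and therefore $e_r=2$.

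The delicate point is the proof of $e_r=2$: unlike for $f_s$, no direct discrepancy inequality involves $\delta_r^E$, because both $C$ and $\Gamma$ meet the $E$-chain at the same end $E_1$, so arithmetic alone cannot conclude. The non-ruledness hypothesis on $\tilde{Z}$ is essential, and the key geometric observation is precisely that two $(-1)$-curves meet $E_1$: under the assumption $e_1=2$ one can successively contract $\Gamma$ and then the new $(-1)$-incarnation of $E_1$, which forces $C$ to become a $0$-curve and hence ruledness.
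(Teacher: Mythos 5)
Your proof is correct. The paper argues by a four-way case split on which end of each Wahl chain carries the entry $2$: the combinations $e_1=f_1=2$ and $e_1=f_s=2$ are excluded by contracting to a smooth rational $0$-curve (contradicting non-ruledness), and $e_r=f_1=2$ is excluded by bounding both end discrepancies strictly below $-1/2$, leaving only $e_r=f_s=2$. You reorganize this into two independent halves. For $f_s$, subtracting $K_Z\cdot\Gamma$ from $K_Z\cdot C$ cancels the common $E_1$-discrepancy and gives $\delta_s^F>\delta_1^F$ outright, which together with $\delta_1^F+\delta_s^F=m_2$ and the monotonicity of the $\delta$'s under the Wahl procedure pins down $f_s=2$ with no case analysis on the $E$-chain at all; this is a genuinely cleaner route to that half. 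For $e_r$, you exclude $e_1=2$ by contracting $\Gamma$ and then $E_1$, turning $C$ into a $0$-curve; this is the same ruledness mechanism as the paper's, applied to a slightly different contraction sequence, and it works. Your closing remark — that no discrepancy inequality can see $\delta_r^E$ because both $C$ and $\Gamma$ meet the $E$-chain at the same end, so the $e_r$ half must be geometric — correctly identifies exactly where the non-ruledness hypothesis is indispensable, which is also where the paper uses it.
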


\begin{proof}
As $r,s>1$, have have that exactly one of the values $e_1, e_r$ is $2$, the same for $f_1, f_s$. We will verify that the other $3$ cases for $e_r,f_s$ are impossible:

\begin{itemize}
\item If $e_1=f_1=2$, then contracting in the configuration $E_1, C, F_1$ we obtain a $\mathbb{P}^1$ with self-intersection equal to $0$. But this is a contradiction with the not ruled assumption on $\tilde{Z}$.

\item If $e_1=f_s=2$, then the argument against is analogue to the previous with $E_1, \Gamma, F_s$.

\item Let $e_r=f_1=2$. We have $K_{Z} \cdot \Gamma = -1-k_1-l_s$. When we compute the values of $\delta_i$ for $[e_1, \dots, e_r]$, we obtain that $\delta_1<\delta_r$, and so $$ k_1 = -1 + \frac{\delta_1}{\delta_1+\delta_r} < -1 + \frac{\delta_1}{\delta_1+\delta_1} = -\frac{1}{2}.$$ An analogue argument shows that $l_s < -\frac{1}{2}$, and so  $$  K_{Z} \cdot \Gamma > -1 + \frac{1}{2}+\frac{1}{2}=0.$$
\end{itemize}
This shows that the only option is $e_r=f_s=2$.
\end{proof}

The previous argument used a very simple observation on discrepancies of Wahl singularities. To continue the proof of Proposition \ref{propcompartida} we need a more general statement on these discrepancies.

\begin{lemma}\label{cotadiscrepancias}
Let $[b_1, \dots, b_t]$ be a Wahl singularity, assume $t \geq 2$ and $b_t=2$, and let us denote its discrepancies by $m_1, \dots, m_t$. Then we have the following bounds:
\begin{itemize}
\item[(Type $M$)] If $b_2=b_3=\dots=b_t$, then $ m_1 = -1+\frac{1}{b_1-2}$ and $m_t = -\frac{1}{b_1-2}$.
\item[(Type $B$)] Otherwise $m_1 = -1+\mu$ and $m_t = -\mu$,
where $\frac{1}{b_1} < \mu < \frac{1}{b_1-1}$. 
\end{itemize}
\end{lemma}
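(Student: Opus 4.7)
The plan is to induct on the length $t$, exploiting the recursive construction of Wahl chains from $[4]$ via the two operations of Equation~\eqref{procWahl}. Since $b_t=2$, the last operation used must be of the first type, so $[b_1,\dots,b_t]$ has a unique \emph{parent} Wahl chain $[b_1-1,b_2,\dots,b_{t-1}]$ of length $t-1$ whose discrepancies $\delta'_1,\dots,\delta'_{t-1}$ satisfy $\delta_i=\delta'_i$ for $i<t$ and $\delta_t=\delta'_1+\delta'_{t-1}$. Setting $\mu:=\delta_1/(\delta_1+\delta_t)$, we then have $\mu=\delta'_1/(2\delta'_1+\delta'_{t-1})$, $m_1=-1+\mu$ and $m_t=-\mu$, so the statement reduces to pinning down the value of $\mu$.

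For the base case $t=2$, the parent must be $[4]$, so the chain is $[5,2]$ with $\delta=(1,2)$, giving Type $M$ with $\mu=1/3=1/(b_1-2)$. For the inductive step $t\geq 3$, I split on whether $b_{t-1}=2$ (Case A, the parent's last entry is $2$) or $b_{t-1}\neq 2$ (Case B). A direct induction on the construction shows that every Wahl chain of length $\geq 2$ starts with a $2$ or ends with a $2$ but not both; so in Case B the parent has length $t-1\geq 2$ and must start with $2$, forcing $b_1-1=2$ and hence $b_1=3$.

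In Case A the inductive hypothesis applies directly to the parent. If the parent is of Type $M$, then $b_2=\cdots=b_{t-1}=2$ (so the child is again Type $M$), and $\delta'_1=1$, $\delta'_{t-1}=t-1$; plugging in gives $\mu=1/(t+1)=1/(b_1-2)$, as required. If the parent is of Type $B$, write $a=\delta'_1$ and $c=\delta'_{t-1}$; the inductive bounds $1/(b_1-1)<a/(a+c)<1/(b_1-2)$ rearrange to $a(b_1-3)<c<a(b_1-2)$, while the target child bounds $1/b_1<a/(2a+c)<1/(b_1-1)$ rearrange to exactly the same pair of inequalities, so the step is immediate. The child is also of Type $B$, since any middle entry $b_j\neq 2$ in the parent is still a middle entry in the child.

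In Case B, I pass to the Wahl \emph{grandparent} $[g_1,\dots,g_{t-2}]$ with discrepancies $\delta''_1,\dots,\delta''_{t-2}$; the $\delta$-recursion for the second operation yields $\delta'_1=\delta''_1+\delta''_{t-2}$ and $\delta'_{t-1}=\delta''_{t-2}$, hence $\mu=(\delta''_1+\delta''_{t-2})/(2\delta''_1+3\delta''_{t-2})$. The required inequalities $1/3<\mu<1/2$ (with $b_1=3$) reduce to the trivial $\delta''_1>0$ and $\delta''_{t-2}>0$. The main obstacle is really just choreographing the case split: once one observes that the combination $b_t=2$ with $b_{t-1}\neq 2$ forces $b_1=3$ through the endpoint dichotomy for Wahl chains, every computation becomes a one-step manipulation of the recursive formula for $\delta$.
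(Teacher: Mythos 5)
Your proof is correct. It rests on the same two ingredients as the paper's argument---the recursion for the $\delta_i$ under the operations of Equation \eqref{procWahl}, and the fact that a Wahl chain of length $\geq 2$ has exactly one of its two end entries equal to $2$---but it organizes them differently. The paper strips all $p=b_1-2$ trailing $2$'s in a single step, lands on a chain $[2,b_2,\dots,b_{t-p}]$, and reads off the closed formula $\delta_{t-p+i}=\delta_{t-p}+i\delta_1$ together with the one inequality $\delta_1>\delta_{t-p}$ (valid because that chain starts with a $2$); the two bounds on $\mu$ then follow by direct arithmetic. You instead induct on the length $t$, peeling one trailing $2$ at a time, verifying that the Type $B$ bounds for first entry $b_1-1$ transform into exactly the Type $B$ bounds for first entry $b_1$ (your rearrangement $a(b_1-3)<c<a(b_1-2)$ on both sides of the step), and treating the moment the trailing $2$'s run out (your Case B, where the endpoint dichotomy forces $b_1=3$) by one explicit grandparent computation. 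What the induction buys is that you never need the explicit arithmetic-progression formula for the $\delta$'s, since the bounds propagate formally; what it costs is the extra bookkeeping of type preservation and the boundary cases, which you do handle correctly. One cosmetic slip: since $b_t=2$, the last operation applied is the \emph{second} one listed in Equation \eqref{procWahl} (append a $2$ and increment the first entry), not the first; your formula $[b_1-1,b_2,\dots,b_{t-1}]$ for the parent is nevertheless the right one, so nothing downstream is affected.
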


\begin{proof}
We will use again the $\delta_i$ as in Equation \ref{ecdiscwahl}.

(Type $M$): Every such singularity comes from $[4]$ adding $2$'s to the right. In this way $\delta_1=1, \ \delta_2=2, \ \dots, \ \delta_t=t$. Then the discrepancies are $m_1 = -1 + \frac{1}{t+1}$ and $m_t = -1 + \frac{t}{t+1}$. As $b_1=t+3$, we get what we wanted.

(Type $B$): Let us say that $b_1=p+2$. Eliminating the $2$'s on the right, we fall into $[2, b_2, \dots, b_{t-p}]$, with $b_{t-p}>2$. In this way, $\delta_1 > \delta_{t-p}$, because the first entry is a $2$. Adding back the $2$'s on the right, we get
\[ \delta_{t-p+i} = \delta_{t-p}+i\delta_1, \qquad i = 0, \dots, p. \]
In particular, $\delta_t = \delta_{t-p}+p\delta_1$. Hence, if $\mu = \frac{\delta_1}{\delta_1+\delta_t}$, then 
\[ m_1 = -1 + \frac{\delta_1}{\delta_1+\delta_t}=-1+\mu \]
and
\[ m_t = -1 + \frac{\delta_t}{\delta_1+\delta_t}=-\frac{\delta_1}{\delta_1+\delta_t} = -\mu. \]
It is enough then to bound $\mu$. As $\delta_1>\delta_{t-p}$, we have
\[ \mu > \frac{\delta_1}{\delta_1+(\delta_1+p\delta_1)}=\frac{1}{p+2}=\frac{1}{b_1}. \]
On the other hand, as all $\delta_i$ are positive,
\[ \mu = \frac{1}{p+1} \cdot \frac{(p+1)\delta_1}{(p+1)\delta_1+\delta_{t-p}}< \frac{1}{p+1}=\frac{1}{b_1-1}. \]
These two bounds give $\frac{1}{b_1}<\mu <\frac{1}{b_1-1}$.
\end{proof}

We now continue the proof of Proposition \ref{propcompartida}.

\begin{lemma} \label{notipoM}
Necessarily $[e_1, \dots, e_r]$ must be of type $M$. Moreover, if $[f_1, \dots, f_s]$ is of type $B$, then $e_1=f_1+1$; If $[f_1, \dots, f_s]$ is of type $M$, then $e_1=f_1-1$.
\end{lemma}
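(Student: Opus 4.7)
The plan is to translate the numerical conditions $K_Z\cdot C>0$ and $K_Z\cdot\Gamma<0$ into sharp discrepancy inequalities, and then play them against the type M/B dichotomy of Lemma \ref{cotadiscrepancias}.

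First I would set up the adjunction-style computation. Since $\widetilde{C}$ and $\Gamma$ are $(-1)$-curves in $\widetilde{Z}$ meeting $E_1+F_1$ and $E_1+F_s$ respectively, the projection formula applied to $\sigma\colon\widetilde{Z}\to Z$, with $K_{\widetilde{Z}}\equiv \sigma^{*}K_Z+\sum k_i E_i+\sum l_j F_j$, gives
\[
K_Z\cdot C=-1-k_1-l_1,\qquad K_Z\cdot\Gamma=-1-k_1-l_s.
\]
Writing $k_1=-1+\alpha$ and $l_1=-1+\beta$, Lemma \ref{cotadiscrepancias} (applied to $[f_1,\ldots,f_s]$ with $f_s=2$) gives $l_1+l_s=-1$ in both the type M and type B cases, so $l_s=-\beta$. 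Therefore the two assumed inequalities reduce to the compact system
\[
\beta<\alpha<1-\beta,
\]
which in particular forces $\beta<1/2$.

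Next I would rule out $[e_1,\ldots,e_r]$ being of type B. In type B, Lemma \ref{cotadiscrepancias} gives the strict bound $\alpha=\mu_E<\tfrac{1}{e_1-1}$, whereas type M attains the sharper value $\alpha=\tfrac{1}{e_1-2}$. Combining $\beta<\alpha<\tfrac{1}{e_1-1}$ with the lower bound for $\beta$ in each sub-case for $[f]$ (type M: $\beta=\tfrac{1}{f_1-2}$; type B: $\beta>\tfrac{1}{f_1}$), and using that $\alpha=\delta_1^{E}/m_E$ and $\beta=\delta_1^{F}/m_F$ are rationals with Wahl-compatible denominators, I would show that the integrality constraints together with the equality of $\delta$ across the two extremal P-resolutions (Theorem \ref{samedelta}) leave room only for the extremal configuration $[e]=[r+3,2,\ldots,2]$, i.e.\ type M.

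Once type M is established we have $\alpha=\tfrac{1}{e_1-2}$ exactly. Plugging into $\beta<\alpha<1-\beta$: if $[f]$ is type M, $\beta=\tfrac{1}{f_1-2}$ yields $e_1<f_1$ immediately; the WW constraint (same $\delta$, Theorem \ref{samedelta}) then forces $e_1=f_1-1$. If $[f]$ is type B, the same two inequalities combined with $\beta=\mu_F\in(\tfrac{1}{f_1},\tfrac{1}{f_1-1})$ give $e_1\le f_1+1$, and the $\delta$-equality pins it to $e_1=f_1+1$.

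The main obstacle will be the type B elimination for $[e]$ in the second step: the raw discrepancy inequalities $\beta<\alpha<1-\beta$ leave numerical room for certain type B chains (e.g.\ $[3,5,2]$ has $\mu_E=2/5$, well inside the range). The extra input must come from the WW structure and integrality of the $\delta_i$'s, and likely also from analyzing the arithmetic-genus-$1$ cycle $D=\widetilde{C}+E_1+\Gamma+F_s+F_{s-1}+\cdots+F_1$ in $\widetilde{Z}$, whose successive $(-1)$-contractions together with the not-ruled hypothesis should constrain the residual chain precisely to the type M form.
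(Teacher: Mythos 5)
Your numerical setup is correct and matches half of the paper's argument: the identities $K_Z\cdot C=-1-k_1-l_1$, $K_Z\cdot\Gamma=-1-k_1-l_s$, the relation $l_1+l_s=-1$, and the resulting system $\beta<\alpha<1-\beta$ are all right, and (via Lemma \ref{cotadiscrepancias}) they do yield the \emph{upper} bounds on $e_1$ in each of the four type combinations (e.g.\ $e_1\le f_1+1$ when $[e]$ is $M$ and $[f]$ is $B$, $e_1\le f_1-1$ when both are $M$, $e_1\le f_1$ in case $BB$, $e_1\le f_1-2$ in case $MB$). But, as you yourself observe, these inequalities alone leave room for type $B$ chains $[e_1,\dots,e_r]$, and the tools you propose to close that gap do not work. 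Theorem \ref{samedelta} is about the two extremal P-resolutions of one wormhole singularity $(Q\in Y)$; in Proposition \ref{propcompartida} no wormhole structure is assumed at all, and even in the application to Theorem \ref{W1W} the curve $\Gamma$ contracts to a \emph{different} cyclic quotient singularity than $C_2$ does, so there is no $\delta$-equality to invoke. Integrality of the $\delta_i$'s likewise gives nothing beyond what Lemma \ref{cotadiscrepancias} already encodes.

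The missing idea is a matching \emph{lower} bound on $e_1$, obtained geometrically: contract $\Gamma$, then $C$, then the string of trailing $(-2)$-curves $F_s,F_{s-1},\dots$ at the end of the $F$-chain. Each contraction raises $E_1^2$ by one, so $E_1$ becomes a smooth rational curve of self-intersection $-e_1+f_1$ (if $[f]$ is of type $B$, with $f_1-2$ trailing $2$'s) or $-e_1+f_1-2$ (if $[f]$ is of type $M$, with $f_1-4$ trailing $2$'s). Since $\tilde Z$ is not ruled, a smooth rational curve cannot have nonnegative self-intersection, which forces $e_1\ge f_1+1$ resp.\ $e_1\ge f_1-1$. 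These lower bounds contradict the discrepancy upper bounds in the $BB$ and $MB$ cases (eliminating type $B$ for $[e]$) and combine with them to give the exact equalities $e_1=f_1+1$ and $e_1=f_1-1$ in the remaining two cases. Your suggestion of analyzing the genus-one cycle $\widetilde C+E_1+\Gamma+F_s+\cdots+F_1$ is pointing in roughly the right direction, but the argument that actually closes the proof is this bookkeeping of $E_1^2$ under successive blow-downs together with the not-ruled hypothesis; without it the lemma is not proved.
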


\begin{proof} 
The basic idea is to see what happens to $E_1$ in $\tilde{Z}$ after we contract all possible $(-1)$-curves. We can contract $\Gamma$, $C$, and then all $(-2)$-curves at the end of the $F_j$ chain $F_s, \dots, F_{s'}$. This will impose conditions to $e_1$ and $f_1$, which will allow to bound the discrepancies involved in $K_Z \cdot \Gamma$.

We are going to analyze the four possible case, which depend on the type $B$ or $M$ of the singularities.

\bigskip
$\textbf{(BB):}$ If $[e_1, \dots, e_r]$ and $[f_1, \dots, f_s]$ are of type $B$, then we have $f_1-2$ entries $2$ starting with $f_s$, and so the curve $E_1$ will have self-intersection $-e_1+1+1+(f_1-2)=-e_1+f_1$ after we contract $\Gamma, C$ and $\{F_s, \dots, F_{s'}\}$. Because of our not ruled assumption on $\tilde{Z}$, we must have $e_1 \geq f_1+1$. By Lemma \ref{cotadiscrepancias}, we have 
\[ k_1 < -1+\frac{1}{e_1-1}, \quad l_s < -\frac{1}{f_1}. \]
Therefore,
\[ K_{Z} \cdot \Gamma > -1 + 1-\frac{1}{e_1-1} + \frac{1}{f_1}= \frac{e_1-1-f_1}{(e_1-1)f_1} \geq 0, \]
since $e_1 \geq f_1+1$.

\bigskip
$\textbf{(MB):}$ If $[f_1, \dots, f_s]$ is of type $M$ and $[e_1, \dots, e_r]$ is of type $B$, then we have $s-1=f_1-4$ entries $2$ starting with $f_s$, and so the curve $E_1$ will have self-intersection $-e_1+1+1+(f_1-4)=-e_1+f_1-2$ after we contract $\Gamma, C$ and $\{F_s, \dots, F_{s'}\}$. Again, because of our not ruled assumption on $\tilde{Z}$, we must have $e_1 \geq f_1-1$. The bound for $k_1$ is as above, while $l_s = -\frac{1}{f_1-2}$. In this way,
\[ K_{Z} \cdot \Gamma > -1 + 1-\frac{1}{e_1-1}+\frac{1}{f_1-2}= \frac{e_1-f_1+1}{(e_1-1)(f_1-2)} \geq 0, \]
since $e_1 \geq f_1-1$.

\bigskip
$\textbf{(BM):}$ If $[f_1, \dots, f_s]$ is of type $B$ and $[e_1, \dots, e_r]$ is of type $M$, then we have $f_1-2$ entries $2$ starting with $f_s$, and so the curve $E_1$ will have self-intersection $-e_1+f_1$ after the contractions as above, and so $e_1 \geq f_1+1$. By Lemma \ref{cotadiscrepancias} we can write
\[ k_1 = -1 + \frac{1}{e_1-2}, \quad l_s < -\frac{1}{f_1}, \]
and so
\[ K_{Z} \cdot \Gamma > -1 + 1 - \frac{1}{e_1-2}+\frac{1}{f_1} = \frac{e_1-2-f_1}{(e_1-2)f_1}. \]
If $e_1 \geq f_1+2$, then $K_{Z} \cdot \Gamma >0$, and so we necessarily get $e_1=f_1+1$.

\bigskip
$\textbf{(MM):}$ If $[e_1, \dots, e_r]$ and $[f_1, \dots, f_s]$ are of type $M$, we have $f_1-4$ entries $2$ starting with $f_s$, and so the curve $E_1$ will have self-intersection $-e_1+f_1-2$ after the contractions as above, and so $e_1 \geq f_1-1$. By Lemma \ref{cotadiscrepancias} we have
\[ k_1 = -1 + \frac{1}{e_1-2}, \quad l_1 = -\frac{1}{f_1-2}, \]
and so 
\[ K_{Z} \cdot \Gamma = -1 + 1 - \frac{1}{e_1-2} + \frac{1}{f_1-2} = \frac{e_1-f_1}{(e_1-2)(f_1-2)}. \]
If $e_1 \geq f_1$, then $K_{Z} \cdot \Gamma >0$, and so we necessarily get $e_1=f_1-1$. 
\end{proof}

\begin{lemma} \label{solotipoMB}
Necessarily $[e_1, \dots, e_r]$ is of type $M$, and $[f_1, \dots, f_s]$ is of type $B$.
\end{lemma}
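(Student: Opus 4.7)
The plan is to rule out the case MM (both chains of type $M$), since this is the only possibility left open by Lemma \ref{notipoM}: that lemma already directly eliminates cases BB and MB, and establishes that $[e_1,\ldots,e_r]$ must be of type $M$. Suppose for contradiction we are in MM. Then $[e_1,\ldots,e_r]=[r+3,2,\ldots,2]$ with $e_1=r+3$, and $[f_1,\ldots,f_s]=[s+3,2,\ldots,2]$ with $f_1=s+3$. The constraint $e_1=f_1-1$ from Lemma \ref{notipoM} gives $r=s-1$; combined with the standing hypothesis $r>1$ this forces $s\geq 3$.

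The strategy is to blow down a cascade of smooth $(-1)$-curves in $\tilde Z$, tracking the self-intersection of $F_1$, until we obtain a smooth rational curve of non-negative self-intersection on a non-ruled surface. In order, I contract $\Gamma$; then $C$ (still a $(-1)$-curve, since $\Gamma$ is disjoint from it); then $F_s, F_{s-1}, \ldots, F_2$, each of which becomes a $(-1)$-curve at its turn, because every $F_j$ for $j\geq 2$ started as a $(-2)$-curve in type $M$ and the preceding contraction raises its self-intersection by exactly $1$ at its unique remaining neighbor; and finally $E_1, E_2, \ldots, E_r$, which likewise become $(-1)$-curves in succession.

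Using the contraction formula $(\pi_* D)^2 = D^2 + (D\cdot E)^2$ for the blow-down of a $(-1)$-curve $E$, the self-intersection of $F_1$ changes only under contractions of curves meeting it: it gains $+1$ from each of $C$ and $F_2$, and $+4$ from each of $E_1, \ldots, E_r$, because the cascade of prior contractions forces $F_1\cdot E_j = 2$ at the moment $E_j$ is contracted. Summing, $F_1^2 = -(s+3) + 2 + 4r = 3s-5 \geq 4$. Since every step was a Castelnuovo contraction, the resulting surface is smooth and non-ruled, yet it contains the smooth rational curve $F_1$ with $F_1^2\geq 4$. This is impossible: on a smooth non-ruled projective surface $K$ is pseudo-effective, so any smooth irreducible curve with non-negative self-intersection — being nef — must satisfy $K\cdot F_1\geq 0$, while adjunction forces $K\cdot F_1 = -2 - F_1^2 < 0$. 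The resulting contradiction rules out MM, so $[f_1,\ldots,f_s]$ must be of type $B$, as claimed.

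The main technical obstacle is the bookkeeping: verifying by induction on the contraction order that each prescribed curve is indeed smooth and of self-intersection $-1$ at its turn, and that $F_1\cdot E_j$ equals exactly $2$ (not more) throughout the $E$-cascade. Both points follow from the type-$M$ structure — the tail of each chain consists entirely of $(-2)$-curves, and all the original intersections in Figure \ref{curvagammaenres} are transversal at pairwise distinct points — but this is the step one must handle carefully before the self-intersection computation becomes legitimate.
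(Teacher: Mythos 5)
Your overall strategy is the same as the paper's: reduce to the MM case via Lemma \ref{notipoM}, contract the cascade $\Gamma, C, F_s,\dots,F_2, E_1,\dots,E_r$, and derive a contradiction with the non-ruledness of $\tilde Z$ from what is left of $F_1$. Your bookkeeping of self-intersections is also correct: each prescribed curve is indeed a smooth $(-1)$-curve at its turn, $F_1\cdot E_j=2$ throughout the $E$-cascade, and $F_1^2=-(s+3)+2+4r=3s-5\geq 4$ at the end.

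There is, however, a genuine error in the final step: $F_1$ does \emph{not} remain smooth. At the moment $E_1$ is contracted we have $F_1\cdot E_1=2$ at two distinct points (one coming from the collapsed $C$, one from the collapsed chain $\Gamma, F_s,\dots,F_2$), so the image of $F_1$ acquires a node; the same happens at each subsequent contraction of $E_j$, since $F_1\cdot E_j=2$ there as well. Hence at the end $F_1$ is an irreducible rational curve with $p_a(F_1)=r$, and the smooth adjunction formula $K\cdot F_1=-2-F_1^2$ you invoke is false. (This is precisely why the paper's proof of this lemma says that $F_1$ ``becomes a nodal rational curve'' and tracks $K\cdot F_1$ through the contractions rather than using smooth adjunction at the end.) The argument is salvageable with a one-line repair: the correct adjunction gives $K\cdot F_1=2p_a(F_1)-2-F_1^2=2r-2-(3s-5)=1-s<0$ for $s\geq 3$, and the nefness claim does not need smoothness --- any irreducible curve with non-negative self-intersection is nef --- so the contradiction with $K$ pseudo-effective on a non-ruled surface still stands. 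But as written, the assertion that the resulting surface ``contains the smooth rational curve $F_1$'' and the ensuing computation $K\cdot F_1=-2-F_1^2$ are incorrect and must be replaced.
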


\begin{proof}
By Lemma \ref{notipoM}, the only other possibility is that $[e_1, \dots, e_r]$ and $[f_1, \dots, f_s]$ are of type $M$, together with $e_1=f_1-1$. Let $q:=f_1 \geq 5$, we have
\[ \frac{\Delta}{\Omega} = [\underbrace{2, \dots, 2}_{q-4}, q]-1-[q-1, \underbrace{2, \dots, 2}_{q-5}]. \]
We have $r=q-4, s=q-3$. In $\tilde{Z}$ we have a situation as in Figure \ref{casoMMuno}, where $C^2=\Gamma^2=-1, F_1^2=-q, E_1^2=-(q-1)^2$, and all the rest are $(-2)$-curves.

\begin{figure}[htbp]
\centering
\begin{tikzpicture}
\draw (-0.1,0.7) -- (0.7,-0.1);
\draw (0.5,-0.1) -- (1.3,0.7);
\draw[dotted] (1.1,0.7) -- (1.9,-0.1);
\draw  (1.7,-0.1) -- (2.5,0.7);
\draw  (2.3,0.7) -- (3.1,-0.1);
\draw  (2.9,0.0) -- (3.7,0.0);
\draw  (3.5,-0.1) -- (4.3,0.7);
\draw  (4.1,0.7) -- (4.9,-0.1);
\draw  (4.7,-0.1) -- (5.5,0.7);
\draw[dotted] (5.3,0.7) -- (6.1,-0.1);
\draw (5.9,-0.1) -- (6.7,0.7);
\draw (0.0,0.45) -- (0.0,1.1);
\draw (0.1,1.2) arc (90:180:0.1);
\draw (0.1,1.2) -- (3.8,1.2);
\draw (3.8,1.2) arc (90:0:0.1);
\draw (3.9,1.1) -- (3.9,0.15);

\begin{scriptsize}
\draw (3.95,-0.1) node {$E_1$};
\draw (4.4,0) node {$E_2$};
\draw (5.2,0) node {$E_3$};
\draw (6.4,0) node {$E_r$};
\draw (2.6,0) node {$F_1$};
\draw (2.2,0) node {$F_2$};
\draw (1.1,0) node {$F_{s-1}$};
\draw (0.2,0) node {$F_s$};
\draw (3.3,-0.25) node {$C$};
\draw (1.95,1.35) node {$\Gamma$};
\end{scriptsize}

\end{tikzpicture}
\caption{Situation in $\tilde{Z}$, case $MM$.}
\label{casoMMuno}
\end{figure}
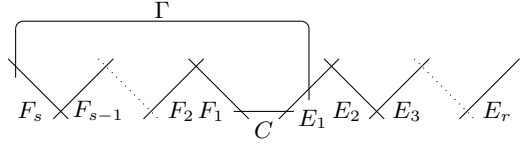

After we contract $\Gamma$, $C$, $F_{q-3}, \ldots, F_2$, we obtain that $F_1$ and $E_1$ form a cycle followed by the chain $E_2,\ldots,E_r$, where $E_1^2=-1, F_1^2=-(q-2)$, y all the rest are $(-2)$-curves. We can now contract $E_1$, so $F_1$ becomes a nodal rational curve and $F_1^2=-(q-6)$. We now keep contracting $E_2,\ldots,E_r$ obtaining that \[ K \cdot F_1 = (q-6)-2 \cdot (r-1)=(q-6)-2(q-5)=4-q<0, \]
since $q \geq 5$. This gives a contradiction since $\tilde{Z}$ is not ruled. 
\end{proof}

With the previous lemma, the only option is $[e_1, \dots, e_r]$ of type $M$, and $[f_1, \dots, f_s]$ of type $B$. Hence we can write \[ [f_1, \dots, f_s]= [p, f_2, \dots, f_t, \underbrace{2, \dots, 2}_{p-2}], \] where $t \geq 2$. Cancelling the $2$s on the right, we get $[2, f_2, \dots, f_t]$ which is a Wahl chain again. As its length is at least two, we have that $[f_t, \dots, f_2, 2]$ is of type $M$ or $B$. In the next final lemmas, we will say that $[f_1, \dots, f_s]$ is of type $BM$ or $BB$ respectively.

\begin{lemma} \label{notipoMBM}
If $[e_1, \dots, e_r]$ is of type $M$, then $[f_1, \dots, f_s]$ must be of type $BB$. 
\end{lemma}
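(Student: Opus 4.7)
The plan is to derive a contradiction with the non-ruledness of $\tilde Z$ by assuming $[f_1,\ldots,f_s]$ is of type $BM$ and producing an irreducible rational curve of positive self-intersection on a birational model of $\tilde Z$. Set $p:=f_1$. Lemma \ref{notipoM} gives $e_1=p+1$, and the type $M$ hypothesis on $[e_1,\ldots,e_r]$ forces $[e_1,\ldots,e_r]=[p+1,2,\ldots,2]$ of length $r=p-2$, so $p\ge 4$ since $r>1$. The $BM$ hypothesis then yields
\[
[f_1,\ldots,f_s]=[p,\underbrace{2,\ldots,2}_{t-2},t+3,\underbrace{2,\ldots,2}_{p-2}]
\]
for some $t\ge 2$, with $s=t+p-2$.

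I perform a sequence of blow-downs starting from $\tilde Z$, each contracting a $(-1)$-curve, in the order: (i) $\Gamma$; (ii) $C$; (iii) $F_s,F_{s-1},\ldots,F_{t+1}$; (iv) $E_1,E_2,\ldots,E_r$; (v) $F_1,F_2,\ldots,F_{t-1}$. After (i) and (ii) the $F$-chain together with $E_1$ forms a cycle; each subsequent curve is a $(-2)$-curve that has just become a $(-1)$-curve because its only remaining neighbor in the cycle or chain was contracted at the previous step, so every contraction is valid. The curve $F_t$ is never contracted. A direct bookkeeping shows that the images of the contracted curves $\Gamma, F_{t+1}, E_1,\ldots,E_r, F_1,\ldots,F_{t-2}$ on $F_t$ all accumulate at a single ``central'' point; only the last contraction---that of $F_{t-1}$, which meets $F_t$ at both the original chain point and this central point---identifies those two points of $F_t$. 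Hence $F_t$ remains an irreducible rational curve (possibly singular) on the final surface $Z^{\mathrm{fin}}$.

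Using the blow-down formula $D^2\mapsto D^2+(D\cdot E)^2$ at each step, together with the intersection numbers $F_t\cdot F_{t+1}=1$, $F_t\cdot E_j=1$ for $1\le j\le r$, $F_t\cdot F_j=p-2$ for $1\le j\le t-2$, and $F_t\cdot F_{t-1}=p-1$, we obtain
\[
F_t^2|_{Z^{\mathrm{fin}}}=-(t+3)+(p-1)+(t-2)(p-2)^2+(p-1)^2,
\]
which is strictly positive for every $p\ge 4$ and $t\ge 2$. Since $Z^{\mathrm{fin}}$ carries an irreducible rational curve of positive self-intersection, $Z^{\mathrm{fin}}$ is ruled, and hence so is $\tilde Z$---contradicting the hypothesis. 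The main obstacle in executing this plan is the combinatorial verification that all intermediate images genuinely converge at a single point of $F_t$ (so that $F_t$ stays irreducible of geometric genus zero), together with the careful tracking of the intersection multiplicities $F_t\cdot F_j$ as they grow through the successive blow-downs.
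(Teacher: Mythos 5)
Your setup and contraction sequence coincide with the paper's proof of this lemma: the same parametrization (the paper writes $q=t+3$ for your $t+3$), the same order of blow-downs $\Gamma, C, F_s,\dots,F_{t+1}, E_1,\dots,E_r, F_1,\dots,F_{t-1}$, and your multiplicities at contraction time ($F_t\cdot F_j=p-2$ for $j\le t-2$, $F_t\cdot F_{t-1}=p-1$) and the resulting value of $F_t^2$ on $Z^{\mathrm{fin}}$ are all correct. The gap is the final inference. A smooth projective surface carrying an irreducible \emph{singular} rational curve of positive self-intersection need not be ruled: every projective K3 surface contains nodal rational curves $D$ with $D^2=2g-2>0$, and the same happens on Enriques, properly elliptic, and general-type surfaces. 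Geometric genus zero together with $D^2>0$ gives no contradiction with non-ruledness, because adjunction only says $2p_a(D)-2=D^2+K\cdot D$ and $p_a(D)$ can be large when $D$ is singular. So ``$F_t$ is an irreducible rational curve with $F_t^2>0$ on $Z^{\mathrm{fin}}$'' does not imply that $Z^{\mathrm{fin}}$ is ruled, and your contradiction evaporates at the last step.

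What is missing is control of $K\cdot F_t$ (equivalently of $p_a(F_t)$), which is precisely the invariant the paper tracks. Under contraction of a $(-1)$-curve $A$ the image of $F_t$ satisfies $K\cdot F_t\mapsto K\cdot F_t-(A\cdot F_t)$, so with your multiplicities one finds on $Z^{\mathrm{fin}}$
\[
K\cdot F_t=(q-2)-1-(p-2)-(t-2)(p-2)-(p-1)=-(p-3)(q-3)-1<0,
\]
where $q=t+3$ (the paper states the case $q=5$, i.e.\ $t=2$, separately as $K\cdot F_t=5-2p$, which agrees with this formula). Equivalently, the same multiplicities give $p_a(F_t)=(t-2)\binom{p-2}{2}+\binom{p-1}{2}$, and adjunction with your value of $F_t^2$ recovers the same $K\cdot F_t$. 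Once you know $K\cdot F_t<0$ \emph{and} that $F_t$ is nef (your computation $F_t^2>0$ supplies this, as does the fact that $F_t$ is irreducible and singular), $K_{Z^{\mathrm{fin}}}$ is not pseudo-effective, hence $Z^{\mathrm{fin}}$ and therefore $\tilde Z$ is ruled --- that is the actual contradiction. In short, your argument is the paper's argument with the decisive half of the numerical bookkeeping replaced by a computation that is true but insufficient; adding the $K\cdot F_t$ (or $p_a$) count repairs it.
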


\begin{proof}
We assume that $[e_1, \dots, e_r]$ is of type $M$, and $[f_1, \dots, f_s]$ is of type $BM$. Let $p=f_1=e_1-1 \geq 4$. Then we have  \[ [e_1, \dots, e_r]=[p+1, \underbrace{2, \dots, 2}_{p-3}] \] and \[ [f_1, \dots, f_s]=[p, \underbrace{2, \dots, 2}_{q-5}, q, \underbrace{2, \dots, 2}_{p-2}], \]
where $q \geq 5$. Let $t=q-3$ be the position of the entry equal to $q$. In this way $E_1^2=-(p+1), F_1^2=-p, F_t^2=-q, C^2=\Gamma^2=-1$, and all other curves in this situation are $(-2)$-curves.

We will achieve a contradiction showing that $K \cdot F_t$ is eventually negative with $F_t$ singular, which goes against the assumption $\tilde{Z}$ is not ruled. We first contract $C$, $\Gamma$, $F_s, \ldots,F_{t+1}$. Then $E_1$ becomes a $(-1)$-curve. We then contract $E_1,\ldots,E_r$ and so $F_1$ becomes a $(-1)$-curve. If $q>5$, then $F_t$ intersects $F_1$ only at one point with $F_t \cdot F_1=p-2$. By contracting $F_1, \dots, F_{t-1}$ we get a singular curve $F_t$ with $K \cdot F_t = -(p-3)(q-3)-1<0$. If $q=5$, then $F_t$ intersects $F_1$ at two points, with $F_t \cdot F_1=1+(p-2)$. After contracting $F_1$, we get a singular $F_t$ with $K \cdot F_t=5-2p<0$.
\end{proof}

\begin{lemma} \label{notipoMBB}
The case $[e_1, \dots, e_r]$ of type $M$ and $[f_1, \dots, f_s]$ of type $BB$ is impossible.
\end{lemma}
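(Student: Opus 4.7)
The plan is to adapt the strategy of Lemma \ref{notipoMBM}: perform the same initial sequence of $(-1)$-curve contractions on $\tilde{Z}$, and then continue contracting further $(-1)$-curves until some smooth rational component acquires non-negative self-intersection on a smooth birational model of $\tilde{Z}$. Any such component supplies a $\mathbb P^1$-pencil, forcing $\tilde{Z}$ to be ruled and contradicting the hypothesis.

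Assume for contradiction that $[e_1,\ldots,e_r]$ is of type $M$ and $[f_1,\ldots,f_s]$ is of type $BB$. By Lemma \ref{notipoM} (in the $MB$ case) we must have $e_1=f_1+1$; writing $p:=f_1$, this gives $[e_1,\ldots,e_r]=[p+1,2,\ldots,2]$ with $r=p-2$ entries and $[f_1,\ldots,f_s]=[p,f_2,\ldots,f_t,2,\ldots,2]$ with $p-2$ trailing $2$s, where $[f_t,\ldots,f_2,2]$ is a Wahl chain of type $B$. In particular $t\ge 3$, $f_t\ge 3$, and some $f_i$ with $2\le i\le t-1$ is at least $3$. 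Performing the contractions of Lemma \ref{notipoMBM}---$C$, $\Gamma$, $F_s,F_{s-1},\ldots,F_{t+1}$, and then $E_1,\ldots,E_r$ in order---an identical bookkeeping shows the resulting smooth birational model $\tilde{Z}'$ of $\tilde{Z}$ contains only a cycle of rational curves $F_1-F_2-\cdots-F_t-F_1$ with $F_1^2=-1$, $F_t^2=p-f_t-1$, $F_i^2=-f_i$ for $2\le i\le t-1$, and $F_1\cdot F_t=p-2$ at a single point (the latter because $t\ge 3$ makes $F_1$ and $F_t$ originally non-adjacent).

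Now contract the $(-1)$-curve $F_1$. By the blow-down formula, and using $F_1\cdot F_t=p-2$, the updated self-intersections are $F_t^2\mapsto p^2-3p+3-f_t$ and $F_2^2\mapsto 1-f_2$. When $f_t\le p^2-3p+3$ the curve $F_t$ is a smooth rational curve with non-negative self-intersection, so it moves in a $\mathbb P^1$-pencil and $\tilde{Z}'$ is ruled---contradiction. Otherwise we iterate: contract whichever of $F_t$ or $F_2$ is currently a $(-1)$-curve (when $F_t^2=-1$ or when $f_2=2$, respectively), observing that each such contraction increments the self-intersection of the opposite endpoint. The $BB$ hypothesis (so $[f_t,\ldots,f_2,2]$ is not of type $M$, equivalently not of the form $[q,2,\ldots,2]$) ensures that this iteration produces alternating $(-1)$-curves at both endpoints of the shrinking cycle rather than getting stuck at one end, and eventually yields an endpoint component of self-intersection zero---again a $\mathbb P^1$-pencil.

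The main obstacle is the bookkeeping in the iterative step. Each $(-1)$-contraction on the cycle side corresponds combinatorially to reversing a single Wahl-construction step for $[2,f_2,\ldots,f_t]$, so one has to verify, using the $BB$ assumption, that the iteration terminates with a $0$-curve before the inner Wahl chain is exhausted. I would organize the argument as an induction on $t$, with base case $t=3$ (forcing $[2,f_2,f_3]=[2,5,3]$, whence $f_t=3$ and $F_t^2=0$ appears directly after the single contraction of $F_1$), and an inductive step that performs one pair of $(-1)$-contractions (one at each end) and reduces to a strictly smaller $BB$ configuration.
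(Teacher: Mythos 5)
Your reduction to the cycle configuration is correct and agrees with the paper's: after contracting $C$, $\Gamma$, $F_s,\dots,F_{t+1}$ and then $E_1,\dots,E_r$, one is left with $F_1^2=-1$, $F_t^2=p-f_t-1$, and $F_1\cdot F_t=p-2$ concentrated at a single point. The gap is in the punchline. When you contract $F_1$, the image of $F_t$ is \emph{not} a smooth rational curve: since $F_1$ and $F_t$ meet at one point with multiplicity $p-2\ge 2$ (recall $p=f_1=e_1-1\ge 4$ here), the image of $F_t$ acquires a point of multiplicity $p-2$ and is therefore singular. A \emph{singular} rational curve of non-negative self-intersection does not move in a $\P^1$-pencil and does not force ruledness (K3 surfaces carry many such curves), so your contradiction evaporates exactly where you invoke it. The same objection applies to each step of your iteration, and the base case $t=3$ is also numerically off: there $f_t=3$ gives $F_t^2=p^2-3p+3-3=p(p-3)\ge 4$, not $0$.

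The paper's proof exploits the singularity instead of fighting it: writing $q=f_t$, it continues with the contractions of the $(-2)$-curves $F_1,F_2,\dots,F_{q-2}$ and tracks the \emph{canonical degree} rather than the self-intersection, arriving at $K\cdot F_t=-(p-3)(q-1)-2<0$ for a curve $F_t$ that is by then singular. On a smooth non-ruled surface, adjunction forces any irreducible curve with $K\cdot D<0$ to satisfy $D^2<0$ and $p_a(D)=0$, i.e.\ to be a $(-1)$-curve; a singular curve with negative canonical degree is therefore the desired contradiction. If you want to salvage your argument, replace the self-intersection bookkeeping by this $K$-degree bookkeeping; note that after contracting only $F_1$ one has $K\cdot F_t=q-2p+1$, which is positive when $q$ is large, so the further contractions of $F_2,\dots,F_{q-2}$ are genuinely needed and no induction on $t$ is required.
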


\begin{proof}
Let $p=f_1=e_1-1 \geq 4$. We can write \[ [e_1, \dots, e_r]=[p+1, \underbrace{2, \dots, 2}_{p-3}] \] and \[ [f_1, \dots, f_s]=[p, \underbrace{2, \dots, 2}_{q-3}, \dots, q, \underbrace{2, \dots, 2}_{p-2}], \]
where $q \geq 3$. Let $t=s-(p-2)$ be the position of the entry $q$. The contractions that will come are exactly the contractions we perform in the previous lemma, but at the end we are contracting $F_1, \ldots, F_{q-2}$. The relevant intersection now is 
\[ K \cdot F_t = (q-p-1)-(q-2)(p-2)=-(p-3)(q-1)-2<0. \] 
\end{proof}

With Lemma \ref{notipoMBB} we finish the proof of Proposition \ref{propcompartida}, and so Theorem \ref{W1W}.

\section{Proof of Theorem \ref{2W}} \label{s5}

Throughout this section we assume the hypothesis of Theorem \ref{2W}, which we now recall. Let $Y$ be a normal projective surface with one cyclic quotient singularity $(Q \in Y)$, which is smooth everywhere else. We assume that the minimal resolution of $Y$ is not ruled, and that $Q$ is a wormhole singularity, i.e. it admits two extremal P-resolutions $f_i^+ \colon (C_i \subseteq X_i) \to (Q \in Y), i=1, 2$. In addition we assume:

\begin{itemize}
\item The strict transform in the minimal resolution of $X_2$ of the exceptional curve $C_2$ for the extremal P-resolution in $X_2$ is a $\P^1$ with self-intersection $-2$, and $X_2$ has only one singularity. 
\item The canonical class $K_{X_1}$ is nef.
\item Both surfaces $X_i$ admit $\Q$-Gorenstein smoothings $(X_i \subseteq \X_i) \to (0 \in \D)$, i.e. they are W-surfaces.
\end{itemize}

We want to prove that we only need flips to run MMP on $(X_2 \subset \X_2) \to (0 \in \D)$. Here we cannot guarantee that $K_{X_2}$ is nef, we indeed may need some flips, as shown by Example \ref{ejenriques}. The proof will be substantially different to the proof of Theorem \ref{W1W}, and lemmas will take a more general situation than the one we started with.

\begin{lemma} \label{controlcurva2Wahl}
Let $\tilde{Z}$ be a smooth projective surface which is not ruled. Suppose that $\tilde{Z}$ has some chain of smooth rational curves $C, E_1, \dots, E_r$, with $C^2=-2, E_i^2=-b_i$, $b_i \geq 2$, and $[b_1, \dots, b_r]=\frac{m^2}{ma-1}$ is a Wahl chain. Suppose also that we have a $(-1)$-curve $\Gamma$ which transversely intersects only one $E_j$ at one point, and also intersects $C$. Then, it follows that $\Gamma$ intersects $C$ transversely at one point, $b_j \neq 2$, and $j \neq r$.
\end{lemma}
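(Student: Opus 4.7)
I would prove each of the three conclusions by contradiction, in every case producing on some iterated $(-1)$-contraction of $\tilde Z$ either (i) a smooth $(0)$-curve, which forces a $\P^1$-fibration and hence ruledness, or (ii) an irreducible curve $F$ with $F^2 > 0$ and $K \cdot F < 0$; since further $(-1)$-contractions only raise $F^2$ and lower $K \cdot F$, such an $F$ would survive in the minimal model as a big, canonically negative curve, contradicting that the minimal model of a non-ruled surface has $K$ nef. The main tools are the contraction formulas for a $(-1)$-curve $\Gamma \subset \tilde Z$: if $\pi \colon \tilde Z \to Z'$ contracts $\Gamma$, then for curves $D, D'$ distinct from $\Gamma$,
$K_{Z'} \cdot \pi_\ast D = K_{\tilde Z} \cdot D - \Gamma \cdot D$, $(\pi_\ast D)^2 = D^2 + (\Gamma \cdot D)^2$, and $\pi_\ast D \cdot \pi_\ast D' = D \cdot D' + (\Gamma \cdot D)(\Gamma \cdot D')$.

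For the transversality claim, set $k := \Gamma \cdot C \geq 1$ and contract $\Gamma$; then $(\pi_\ast C)^2 = k^2 - 2$ and $K \cdot \pi_\ast C = -k$, so $k \geq 2$ is already an instance of (ii), forcing $k = 1$. For the claim $b_j \neq 2$, assume $b_j = 2$; after contracting $\Gamma$ both $\pi_\ast C$ and $\pi_\ast E_j$ are $(-1)$-curves, and the formulas above give $\pi_\ast C \cdot \pi_\ast E_j = 1$ if $j \geq 2$ and $= 2$ if $j = 1$. A subsequent contraction of $\pi_\ast C$ then turns the image of $E_j$ into a smooth rational $(0)$-curve (outcome (i)) or into a nodal rational curve with self-intersection $3$ and canonical intersection $-3$ (outcome (ii)), respectively.

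For the claim $j \neq r$, assume $j = r$; by the previous step $b_r \geq 3$. Then $\Gamma, C, E_1, \ldots, E_r$ form a cycle of $r + 2$ rational curves, and contracting $\Gamma$ followed by $\pi_\ast C$ produces on $Z''$ a cycle of $r$ smooth rational curves with self-intersection string $[-(b_1 - 1), -b_2, \ldots, -b_{r-1}, -(b_r - 2)]$, whose entries sum to $3r - 2$ by the identity $\sum b_i = 3r + 1$ for Wahl chains. The key combinatorial input is that any Wahl chain of length $r \geq 2$ with $b_r \neq 2$ must have been built by the left-extension rule in \eqref{procWahl} from a shorter Wahl chain, and therefore has $b_1 = 2$; hence the cycle always contains a $(-1)$-curve at the first position. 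Contracting it yields a cycle of length $r - 1$ whose entries arise analogously from the shorter Wahl chain $[b_2, \ldots, b_{r-1}, b_r - 1]$, and the same dichotomy recurs. Iterating, either an entry is driven to $0$ at some stage (outcome (i)), or the recursion terminates at a single nodal rational curve of self-intersection $1$ and canonical intersection $-1$ (outcome (ii)).

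The principal obstacle in the last step is the bookkeeping of the recursion, in particular verifying in every branch that the process terminates in one of the two desired outcomes. The extremal case is the purely left-extended Wahl chain $[2, 2, \ldots, 2, r+3]$, for which the explicit sequence of reductions $[1, 2, \ldots, 2, r+1] \to [1, 2, \ldots, 2, r] \to \cdots \to [1, 3]$ lands on the asserted nodal rational curve, and the general case should be handled by the same inductive framework keyed on the left-extension structure of the underlying Wahl chain.
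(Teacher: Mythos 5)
Your proposal is correct and follows essentially the same strategy as the paper: repeatedly contract $(-1)$-curves and contradict the fact that the minimal model of a non-ruled surface has nef canonical class, using adjunction and the structure of Wahl chains. The only (cosmetic) difference is in the step $j \neq r$, where the paper splits into the two explicit cases $[b_1,\dots,b_r]=[2,\dots,2,r+3]$ (type $M$) and $[2,\dots,2,b_{s+1},\dots,b_{r-1},s+2]$ (type $B$) and computes $K\cdot E_r=-1$ or $-2$ directly, whereas you package both into a single recursion on the cycle keyed to the left-extension structure; the two terminal outcomes of your recursion are exactly the paper's two cases.
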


This lemma will be useful when we have a mk1A neighborhood via $\Gamma$ over an extremal P-resolution with only one Wahl singularity and a $(-2)$-curve. We can take $\tilde{Z}$ as the minimal resolution of the singularity, and $E_1, \dots, E_r$ the exceptional divisor.

\begin{proof}
Note first that $K \cdot C=0$ by adjunction. If we blow-down $\Gamma$, then the intersection $K \cdot C$ decreases in $\Gamma \cdot C$. But canonical class must be eventually nef, and so the only possibility is  $\Gamma \cdot C=1$.

After blowing-down $\Gamma$, we can blow-down $C$, and so $E_j^2\leq -3$ as $\tilde{Z}$ is not ruled, i.e. $b_j \neq 2$.

Suppose now that $j=r$. Since $b_j>2$, we have $b_1=2$ (or $r=1$, where $b_1=b_j=4$, which leads to a straightforward contradiction). We have two options.
\begin{itemize}
\item If $[b_r, \dots, b_1]$ is a Wahl singularity of type $M$, so that $[b_1, \dots, b_r]=[2, \dots, 2, r+3]$, we can blow-down $\Gamma, C$, and $E_1, E_2, \dots, E_{r-1}$. We get a nodal curve $E_r$ with $K \cdot E_r=-1$, which is a contradiction, since $\tilde{Z}$ is not ruled.

\item If $[b_r, \dots, b_1]$ is a Wahl singularity of type $B$, so that $$ [b_1, \dots, b_r]=[2, \dots, 2, b_{s+1}, \dots, b_{r-1}, s+2], $$
we can blow-down $\Gamma, C$, and $E_1, \dots, E_s$. Thus, we get $K \cdot E_r=-2$, which is again a contradiction.
\end{itemize}
It follows that $j \neq r$. 
\end{proof}

The next lemma will be useful to control mk1A neigborhoods which will appear on the proof of Theorem \ref{2W}. Roughly speaking, when we run MMP to $(X_2 \subset \X_2) \to (0 \in \D)$, we can get an extremal P-resolution with either two singularities and a $(-1)$-curve in the middle, or just one singularity with a $(-2)$-curve.

\begin{lemma} \label{mk1Aesflip}
Assume that the same hypotheses of Lemma \ref{controlcurva2Wahl} hold. Let $Z$ be the surface obtained contracting $E_1, \dots, E_r$, and assume that $Z$ admits a $\Q$-Gorenstein smoothing $(Z \subset \mathcal{Z}) \to (0 \in \D)$. Then $\Gamma \subseteq Z$ induces a mk1A neighborhood which must be of flipping type. The resulting extremal P-resolution after the flip must have either two Wahl singularities with a $(-1)$-curve in the middle, or one Wahl singularity with a $(-2)$-curve. 
\end{lemma}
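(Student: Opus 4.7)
The plan is to verify the mk1A conditions for $\Gamma$ in $Z$, identify the cyclic quotient produced by the contraction, use Wahl arithmetic to rule out the divisorial case, and finally read off the shape of the resulting extremal P-resolution from Theorem~\ref{calcularc}.

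First, I would check that $\Gamma \subset Z$ defines an mk1A extremal neighborhood in the sense of Section~\ref{s31}. Lemma~\ref{controlcurva2Wahl} gives $\Gamma \cdot C = \Gamma \cdot E_j = 1$, with $j \neq r$ and $b_j \geq 3$, so in particular $\Gamma$ meets exactly one curve $E_j$ of the Wahl chain and does so transversely. The image of $\Gamma$ in $Z$ is a smooth rational curve passing through the unique Wahl singularity $Q_Z$ at exactly one point (coming from $E_j$). Using the discrepancy formula on $\tilde{Z}$, $K_Z \cdot \Gamma = -1 - k_j$ with $k_j \in (-1,0)$, and a standard pullback computation yields $\Gamma_Z^2 = -1 + a_j$ with $a_j \in (0,1)$; both are negative, so the mk1A structure is confirmed.

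Second, from the mk1A formulas recalled in Section~\ref{s31}, the cyclic quotient singularity obtained by contracting $\Gamma$ has minimal resolution the chain
\[
[b_1, \dots, b_{j-1}, b_j - 1, b_{j+1}, \dots, b_r],
\]
with all entries $\geq 2$. The arithmetic key is the Wahl identity $\sum_i b_i = 3r+1$: the new chain has entry sum $3r$, so it cannot be a Wahl chain of any length, and hence the resulting cyclic quotient is not a Wahl singularity. By the MMP dichotomy for W-surfaces described in Section~\ref{s31} (the Mori algorithm), an mk1A contraction is either divisorial -- producing a Wahl singularity on the special fiber -- or a flip. Since our singularity is not Wahl, the contraction must be a flip, yielding an extremal P-resolution over the new cyclic quotient.

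Third, Theorem~\ref{calcularc} pins down the shape of this extremal P-resolution. With the new chain summing to $3r$, the exceptional $(-c)$-curve satisfies $c = 3 - l$, where $l \in \{0,1,2\}$ is the number of Wahl singularities on the extremal P-resolution. The case $l = 0$ would force the resolution of the cyclic quotient to be a single curve, i.e.\ $r = 1$; but $j < r$ from Lemma~\ref{controlcurva2Wahl} implies $r \geq 2$. Hence $l = 1$ (one Wahl singularity and a $(-2)$-curve) or $l = 2$ (two Wahl singularities and a $(-1)$-curve in the middle), exactly as required.

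The main expected obstacle is the ``not Wahl $\Rightarrow$ flipping'' step, which invokes the fact, established in \cite{HTU13} and \cite[Sect.~2.4]{U16b}, that every mk1A admits either a divisorial contraction or a flip, with the divisorial case characterized by the outcome being a Wahl singularity. The remaining inputs -- notably the sign computations of $K_Z \cdot \Gamma$ and $\Gamma_Z^2$ via discrepancies -- are routine but require careful bookkeeping.
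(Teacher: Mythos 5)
Your proposal is correct and follows essentially the same route as the paper: contract $\Gamma$ to get the cyclic quotient with chain $[b_1,\dots,b_j-1,\dots,b_r]$, use the Wahl identity $\sum b_i=3r+1$ to see the entry sum drops to $3r$ so the singularity cannot be Wahl (hence the mk1A is flipping), and then apply Theorem~\ref{calcularc} to conclude $c=3-l$ with $l\in\{1,2\}$. Your explicit exclusion of $l=0$ via $j\neq r$ (so $r\geq 2$) is a point the paper leaves implicit, but otherwise the arguments coincide.
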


\begin{proof}
Let $\sigma\colon \tilde{Z} \to Z$ be the minimal resolution of $Z$. The curve $\Gamma$ induces a mk1A neighborhood on $Z$. Note that Lemma \ref{controlcurva2Wahl} says that $b_j$ must be $> 2$. Hence we have the mk1A neigborhood $(\Gamma \subset Z) \to (Q \in Y) =\frac{1}{\Delta}(1, \Omega)$ where
$$ \frac{\Delta}{\Omega} = [b_1, \dots, b_j-1, \dots, b_r] $$
has every entry $\geq 2$. Since $[b_1, \dots, b_r]$ is a Wahl singularity, $\sum b_i=3r+1$, and then the sum of the entries of $\frac{\Delta}{\Omega}$ is $3r$. This proves that $\frac{1}{\Delta}(1, \Omega)$ is not a Wahl singularity, and then we have a flipping mk1A (see Subsection 3.1). 

After we flip, we obtain a new W-surface $Z'$, together with a extremal P-resolution over $(Q \in Y)$. In this way, we can apply Theorem 2.5 to it, since we computed before the sum of the entries of the minimal resolution of $(Q \in Y$). We get that if the new extremal P-resolution has one singularity, then self-intersection of the flipping curve (on the new minimal resolution) must be $-2$; if there are two singularities, then this self-intersection must be $-1$ (on the new minimal resolution).
\end{proof}

The proof of Theorem \ref{2W} will be based on a repeated use of Lemma \ref{mk1Aesflip}. We will need to control the new outcomes from Lemma \ref{mk1Aesflip}. For that, we give a definition for these two cases.  

\begin{definition}
Let $Y$ be a normal projective surface with one cyclic quotient singularity $Q \in Y$. We name the following extremal P-resolutions $(C \subset Z) \to (Q \in Y)$ as follows:

\begin{itemize}
\item[\textbf{Type(-1)}:] The surface $Z$ has two singularities, and the strict transform of $C$ in the minimal resolution of $Z$ is a $(-1)$-curve.

\item[\textbf{Type(-2)}:] The surface $Z$ has one singularity, and the strict transform of $C$ in the minimal resolution of $Z$ is a $(-2)$-curve.
\end{itemize}
\label{-1-2}
\end{definition}

We are not assuming $K_Z$ is ample, it is only required $C \cdot K_Z >0$.

\begin{lemma} \label{-2}
Let us consider the hypothesis of Theorem \ref{2W}. Let $Z_1$ be the W-surface $X_2$. Assume we have run the MMP on W-surfaces $Z_1, \ldots, Z_m$ so that the flip from $(\Gamma_i \subset Z_i)$ to $(C_{i+1} \subset Z_{i+1})$ comes always from a Type(-2) extremal P-resolution as in Lemma \ref{mk1Aesflip}. In addition, assume that $K_{Z_m}$ is not nef. Then the only possible mk1A for $Z_m$ is the one described in Lemma \ref{controlcurva2Wahl}.
\end{lemma}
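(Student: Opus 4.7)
I would prove Lemma~\ref{-2} by induction on $m$, handling the base case by a direct comparison with $X_1$ and the inductive step by transferring any putative mk1A back through the Type(-2) flip.

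\textbf{Base case ($m=1$).} Here $Z_1=X_2$ and $C_1=C_2$. Let $\Gamma\subset X_2$ be an mk1A, so its strict transform $\widetilde{\Gamma}\subset\widetilde{X_2}$ is a $(-1)$-curve meeting exactly one $E_j$ of the Wahl chain at $P_2$. Assume for contradiction that $\widetilde{\Gamma}$ does not meet $\widetilde{C_2}$; then $\Gamma$ meets the exceptional locus of $f_2^+\colon X_2\to Y$ only through the singular point $P_2$. I would push $\Gamma$ down to a curve $\Gamma_Y$ on $Y$ and lift it to a strict transform $\Gamma_1\subset X_1$. Using the projection formula together with the discrepancy identities $K_{X_i}=(f_i^+)^*K_Y+b_iC_i$, and Theorem~\ref{samedelta} (which guarantees that the two extremal P-resolutions of $(Q\in Y)$ share the same $\delta$-invariant), a direct Mumford intersection computation yields $K_{X_1}\cdot\Gamma_1<0$. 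This contradicts the nefness of $K_{X_1}$, so $\widetilde{\Gamma}$ must meet $\widetilde{C_2}$, and then Lemma~\ref{controlcurva2Wahl} identifies the mk1A as the special one.

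\textbf{Inductive step.} Assume the lemma for $Z_{m-1}$. By the induction hypothesis, the flipping curve $\Gamma_{m-1}\subset Z_{m-1}$ is the special mk1A: its strict transform meets both $\widetilde{C_{m-1}}$ and the Wahl chain at $P_{m-1}$. The Type(-2) flip $Z_{m-1}\dashrightarrow Z_m$ is an isomorphism away from the exceptional loci of the two extremal P-resolutions of $(Q_{m-1}\in Y_{m-1})$. Suppose now that $\Gamma\subset Z_m$ is a mk1A whose strict transform $\widetilde{\Gamma}$ avoids $\widetilde{C_m}$. I would repeat the base case argument with $Y$ replaced by $Y_{m-1}$ and $X_1,X_2$ replaced by $Z_{m-1},Z_m$: pushing $\Gamma$ to $Y_{m-1}$ and lifting yields a strict transform $\Gamma'\subset Z_{m-1}$, and the analogous Mumford intersection bookkeeping (invoking Theorem~\ref{samedelta} for the two extremal P-resolutions of $(Q_{m-1}\in Y_{m-1})$) produces $K_{Z_{m-1}}\cdot\Gamma'<0$. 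Tracking how $\widetilde{\Gamma}$ meets the exceptional locus in $\widetilde{Z_m}$ through the Wahl chain at $P_m$, one verifies that $\Gamma'$ is itself a mk1A in $Z_{m-1}$ whose strict transform does not meet $\widetilde{C_{m-1}}$, contradicting the induction hypothesis.

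\textbf{Main obstacle.} The main technical difficulty lies in the inductive step: making precise the claim that a mk1A in $Z_m$ avoiding $\widetilde{C_m}$ really transfers to a mk1A in $Z_{m-1}$ that avoids $\widetilde{C_{m-1}}$. The locality of the flip and the equality of $\delta$-invariants from Theorem~\ref{samedelta} are the central inputs, but the careful tracking of strict transforms through the birational modification, and the corresponding discrepancy contributions to the Mumford intersection numbers, are delicate; in particular, one must rule out that the curve $\Gamma$ somehow approaches the new exceptional locus in $Z_m$ in a way that cannot be pulled back cleanly to $Z_{m-1}$.
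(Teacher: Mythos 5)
Your global strategy---assume a mk1A on $Z_m$ whose strict transform avoids $\widetilde{C_m}$, transport it backwards, and contradict the nefness of $K_{X_1}$---is the same as the paper's, but the two key steps rest on a mechanism that does not close, and the obstacle you flag at the end is exactly the point the paper resolves (by a much simpler observation than the one you are reaching for). In the base case, the projection formula gives $K_{X_2}\cdot\Gamma=K_Y\cdot\Gamma_Y+b_2(C_2\cdot\Gamma)$ and $K_{X_1}\cdot\Gamma_1=K_Y\cdot\Gamma_Y+b_1(C_1\cdot\Gamma_1)$ with $b_i=(K_{X_i}\cdot C_i)/C_i^2<0$; from $K_{X_2}\cdot\Gamma<0$ you only obtain the upper bound $K_Y\cdot\Gamma_Y<|b_2|(C_2\cdot\Gamma)$, which leaves the sign of $K_{X_1}\cdot\Gamma_1$ undetermined, and Theorem \ref{samedelta} controls $K_{X_i}\cdot C_i$, not the intersection of $K_{X_i}$ with an arbitrary curve through $Q$. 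So the assertion that ``a direct Mumford intersection computation yields $K_{X_1}\cdot\Gamma_1<0$'' is precisely the missing content.

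The paper's mechanism is geometric rather than numerical: on a surface whose minimal model is not ruled, two distinct $(-1)$-curves are disjoint (contracting one gives the other $B^2\geq 0$ and $K\cdot B<0$, forcing ruledness), and the strict transform of a mk1A curve meets the exceptional configuration in exactly one component, transversally at one point, away from the nodes of the chains. Since all blow-ups and blow-downs relating $\widetilde{Z}_m,\widetilde{Z}_{m-1},\dots,\widetilde{X_2},\widetilde{X_1}$ take place at nodes of the exceptional chains, the curve $\Gamma$ passes through every flip and through the wormhole surgery $X_2\rightsquigarrow X_1$ completely unchanged: it cannot meet any flipping curve $\Gamma_i$, nor $\widetilde{C_1}$ when the latter is a $(-1)$-curve, by disjointness of $(-1)$-curves, so it arrives in $\widetilde{X_1}$ as a $(-1)$-curve meeting at most one exceptional curve of $\widetilde{X_1}\to X_1$ transversally at one point. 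Then $K_{X_1}\cdot\Gamma=-1-k<0$ because the discrepancy satisfies $-1<k\leq 0$, contradicting the nefness of $K_{X_1}$. This one observation replaces both your base-case computation and your inductive transfer, and it also disposes of a worry your induction would separately have to address: without it you cannot exclude that the transported curve defines a mk2A on $Z_{m-1}$ or meets $\widetilde{C_{m-1}}$, in which case the inductive hypothesis---which constrains only mk1A's---yields no contradiction.
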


\begin{proof}
If not, we have a curve $\Gamma_{m} \subset Z_m$ so that it is a $(-1)$-curve in the minimal resolution $\tilde{Z}_m$, it is disjoint from $C_m$, and it intersects only $E_i$ transversally at one point. We note that the flipping curves $\Gamma_i \subset Z_i$ satisfy Lemma \ref{controlcurva2Wahl}. As none of the $Z_i$ are ruled, in particular $Z_m$, the curves $\Gamma_{m-1}, \Gamma_{m}$ are disjoint in $\tilde{Z}_m$. Hence $\Gamma_{m}$ is again a $(-1)$-curve in $\tilde{Z}_{m-1}$. Inductively we obtain a $(-1)$-curve $\Gamma_m$ in $\tilde{Z}_1$ which is disjoint from $C_1$ and only intersects some $E_j$ transversally at one point. We now go to $X_1$. Since $X_1$ is not ruled, the curve $\Gamma_m$ must be a $(-1)$-curve in $\tilde{X}_1$ intersecting only one exceptional curve of $\tilde{X}_1 \to X_1$ transversally at one point. But that is not possible since then $K_{X_1} \cdot \Gamma_m <0$. Therefore the curve $\Gamma_m$ must intersect $C_m$, and so we are as in Lemma \ref{controlcurva2Wahl}.
\end{proof}

\begin{lemma} \label{-1}
Let us consider the hypothesis of Theorem \ref{2W}. Let $Z_1$ be the W-surface $X_2$. Assume we have run the MMP on W-surfaces $Z_1, \ldots, Z_m$ so that the flip from $(\Gamma_i \subset Z_i)$ to $(C_{i+1}\subset Z_{i+1})$ comes always from a Type(-2) extremal P-resolution as in Lemma \ref{mk1Aesflip} for $i=1,\ldots,m-2$, and the last one is Type(-1). In addition, assume that $K_{Z_m}$ is not nef. Then it is not possible to have a mk1A neighborhood for $Z_m$.
\end{lemma}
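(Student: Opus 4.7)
I argue by contradiction: assume that $(\Gamma_m \subset Z_m)$ gives a mk1A neighborhood, so that the strict transform $\tilde{\Gamma}_m$ is a $(-1)$-curve in $\tilde{Z}_m$ meeting exactly one exceptional curve $E_j$ (say, in one of the two Wahl chains of $Z_m$) transversely at one point. Since the last flip was of Type$(-1)$, the strict transform $\tilde{C}_m$ is itself a $(-1)$-curve in $\tilde{Z}_m$ which meets $E_1$ and $F_1$ transversely at one point each. The plan is to split into two cases according to whether or not $\tilde{\Gamma}_m$ meets $\tilde{C}_m$, and to derive a contradiction in each.

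First I would handle the case $\tilde{\Gamma}_m \cdot \tilde{C}_m = 0$ by adapting the tracking argument from Lemma \ref{-2}. Because $\tilde{\Gamma}_m$ is disjoint from the newly created $\tilde{C}_m$, and the non-ruled assumption forces it to be disjoint from each earlier flipping curve, it descends through the sequence of flips as a $(-1)$-curve to $\tilde{Z}_1 = \tilde{X}_2$ and then, through the wormhole, to $\tilde{X}_1$. There it would be a $(-1)$-curve meeting a single exceptional curve of $\tilde{X}_1 \to X_1$ transversely at one point, producing $K_{X_1} \cdot \Gamma_m < 0$ and contradicting the nefness of $K_{X_1}$.

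For the case $k := \tilde{\Gamma}_m \cdot \tilde{C}_m \geq 1$, I would argue via the Zariski decomposition of the canonical class. Since $\tilde{Z}_m$ is not ruled, $K_{\tilde{Z}_m}$ is pseudo-effective and admits a Zariski decomposition $K_{\tilde{Z}_m} = P + N$ with $P$ nef, $N$ effective, and the intersection matrix of the irreducible components of $N$ negative definite. Both $\tilde{\Gamma}_m$ and $\tilde{C}_m$ are $(-1)$-curves, hence $K_{\tilde{Z}_m} \cdot \tilde{\Gamma}_m = K_{\tilde{Z}_m} \cdot \tilde{C}_m = -1 < 0$, which forces them to be components of $N$ (since $P$ is nef and $N$ effective). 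But the restricted intersection submatrix
\[
\begin{pmatrix} -1 & k \\ k & -1 \end{pmatrix}
\]
has determinant $1 - k^2 \leq 0$ for $k \geq 1$, contradicting the negative definiteness of its parent matrix.

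The main obstacle I expect is the careful verification of the tracking argument in the first case, particularly across the final Type$(-1)$ flip: the minimal resolutions $\tilde{Z}_{m-1}$ and $\tilde{Z}_m$ differ substantially over $(Q \in Y_m)$, since the two extremal P-resolutions produce different Wahl configurations, so one must check that a $(-1)$-curve in $\tilde{Z}_m$ disjoint from $\tilde{C}_m$ and meeting a single exceptional $E_j$ really does correspond to a comparable $(-1)$-curve in $\tilde{Z}_{m-1}$. The passage through the wormhole from $\tilde{X}_2$ to $\tilde{X}_1$ also requires a parallel local analysis, analogous to what is done at the end of the proof of Lemma \ref{-2}.
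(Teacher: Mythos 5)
Your proposal is correct and follows essentially the same route as the paper: the paper's proof likewise tracks the putative mk1A curve $\Gamma_m$ back through the chain of flips (it cannot meet $C_m,\dots,C_1$ because $(-1)$-curves on the non-ruled surfaces involved must stay disjoint) until it produces a $K$-negative curve contradicting the nefness of $K_{X_1}$. Your Zariski-decomposition argument in the case $\tilde{\Gamma}_m\cdot\tilde{C}_m\geq 1$ is simply a formal justification of the disjointness claim that the paper asserts directly from non-ruledness, and the tracking step you flag as the main obstacle is exactly the point the paper leaves implicit.
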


\begin{proof}
The proof is similar to the proof of Lemma \ref{-2}. A potential $\Gamma_m \subset Z_m$ defining a mk1A will not intersect $C_m, C_{m-1}, \dots, C_1$. Hence it will survive untouched until reaching $Z_1$, giving a mk1A neighborhood to $Z_1$, and in particular it will be a negative curve, but we know that this is not possible since $K_{Z_1}$ is nef. It is key that the surfaces involved are not ruled, so that $(-1)$-curves remain disjoint.
\end{proof}

We now show a key step to rule out certain mk2A neighborhood. After that we will have everything to give a proof for Theorem \ref{2W}.

\begin{lemma} \label{prelemacompartido}
Let $Z$ be a normal projective surface, $Q_1, Q_2$ the only singular points on $Z$. Assume that there is a $(-1)$-curve $D$ passing through $Q_1$ and $Q_2$, such that $(D \subset Z) \to (Q \in Y)$ is an extremal P-resolution. 

Let $\sigma\colon \tilde{Z} \to Z$ be the minimal resolution of $Z$, which is not ruled, with $F_1, \dots, F_s$ and $G_1, \dots, G_t$ the exceptional divisors for $Q_1$ and $Q_2$. Suppose that we have two $(-1)$-curves $B$ and $\Gamma$ on $Z$, such that on $\tilde{Z}$ the configuration is as on Figure \ref{de2WaW1W}, and $B$ intersect transversally other curve in the set $\{F_i, G_j\}$ (different of $G_t$ and $F_s$) in a point. Then $\Gamma \cdot K_Z \geq 0$ .

\begin{figure}[htbp]
    \centering
    \begin{tikzpicture}
        \draw (-0.1,0.7) -- (0.7,-0.1);
        \draw[dotted] (0.5,-0.1) -- (1.3,0.7);
        \draw (1.1,0.7) -- (1.9,-0.1);
        \draw (1.65,0) -- (2.55,0);
        \draw (2.3,-0.1) -- (3.1,0.7);
        \draw[dotted] (2.9,0.7) -- (3.7,-0.1);
        \draw (3.5,-0.1) -- (4.3,0.7);
        \draw (0,0.45) -- (0,1.2);
        \draw[dotted] (0,0.9) -- (0,1.5);
        \draw (0.3,0.15) -- (0.3,1.1);
        \draw (0.4,1.2) arc (90:180:0.1);
        \draw (0.4,1.2) -- (3.8,1.2);
        \draw (3.9,1.1) arc (0:90:0.1);
        \draw (3.9,1.1) -- (3.9,0.15);
        
        \begin{scriptsize}
        \draw (0.15,0.05) node {$G_t$};
        \draw (1.4,0) node {$G_1$};
        \draw (2.1,0.2) node {$D$};
        \draw (2.8,0) node {$F_1$};
        \draw (4.0,0) node {$F_s$};
        \draw (-0.2,0.9) node {$B$};
        \draw (2.1,1.35) node {$\Gamma$};
        \end{scriptsize}
    \end{tikzpicture}
    \caption{Configuration on $\tilde{Z}$.}
    \label{de2WaW1W}
\end{figure}
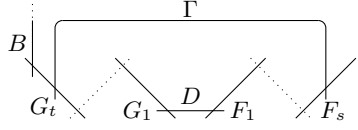
\end{lemma}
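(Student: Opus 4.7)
The plan is to first translate the claim $\Gamma \cdot K_Z \geq 0$ into an inequality on the end discrepancies of the two Wahl chains, and then to combine Lemma \ref{cotadiscrepancias} with the non-ruledness of $\widetilde Z$ and the presence of $B$ to verify this inequality. By the projection formula applied to $\widetilde\Gamma$, which is a $(-1)$-curve on $\widetilde Z$ meeting only $F_s$ and $G_t$ transversally at one point each,
\[
\Gamma \cdot K_Z \;=\; K_{\widetilde Z}\cdot \widetilde{\Gamma} - k_s - l_t \;=\; -1 - k_s - l_t,
\]
where $k_a, l_b$ denote the (negative) discrepancies of $F_a, G_b$; so the claim reduces to $k_s + l_t \leq -1$. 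The Wahl-chain identity $k_1+k_s = l_1+l_t = -1$ together with the extremal-P-resolution condition $K_Z\cdot D>0$ (which gives $k_1+l_1<-1$) does not by itself force $k_s+l_t \leq -1$, so the additional hypotheses on $B$ and on non-ruledness are essential.

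My plan is to argue as in Section \ref{s4}: assume for contradiction that $k_s + l_t > -1$, then contract $\widetilde{\Gamma}$ and iteratively contract the $(-1)$-curves that appear, including $\widetilde{B}$, the middle curve $\widetilde{D}$, and the $(-2)$-tails that sit next to the end curves $F_s$ and $G_t$ and near the interior curve that $B$ meets. The non-ruledness of $\widetilde Z$ propagates through the entire sequence, so after each contraction the self-intersection of each surviving curve must stay sufficiently negative. Tracking these self-intersections translates into explicit inequalities on the entries of the Wahl chains near $F_s$, $G_t$, and the point of intersection of $B$ with its chain, and feeding these back into Lemma \ref{cotadiscrepancias} produces the desired bound on $k_s + l_t$.

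The main obstacle will be the combinatorial case analysis. Following Lemma \ref{cotadiscrepancias}, each of $F_s$ and $G_t$ sits at an end that is of type $M$ or of type $B$, giving four a priori cases $MM, MB, BM, BB$ to check, and within each the analysis splits further according to whether $B$ meets an interior $F_i$ or an interior $G_j$, and at what depth along the chain. I expect most sub-cases to reduce to short contraction computations parallel to Lemmas \ref{notipoM}--\ref{notipoMBB}; the bookkeeping of the configurations of $(-2)$-tails eaten up by successive contractions, rather than any conceptually new input, should be the real difficulty.
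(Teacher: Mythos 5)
Your proposal is correct and follows essentially the same route as the paper: reduce $\Gamma\cdot K_Z\geq 0$ to the bound $-1-k_s-l_t\geq 0$ on the end discrepancies, then combine blow-downs of $\Gamma$, $B$, $D$ and the $(-2)$-tails (using that $\tilde Z$ is not ruled) with the bounds of Lemma \ref{cotadiscrepancias}, split into the four cases according to the type $M$/$B$ of the two Wahl chains. The only inessential difference is that the paper never needs to subdivide by where $B$'s second intersection point lies — it only uses $B\cdot G_t=1$ to force $g_t\geq 3$ and to gain one extra unit in the self-intersection count for $G_t$ — so the case analysis is lighter than you anticipate.
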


\begin{proof}
Let us assume we have such a configuration of curves. Let $f_1$, $\dots$, $f_s$, $g_1$, $\dots$, $g_t$ be so that $F_i^2=-f_i, G_j^2=-g_j$. We have $g_t \geq 3$, because otherwise by contracting $B$ y $\Gamma$ the curve $G_t$ is a $\P^1$ with $G_t^2=0$, but $\tilde{Z}$ is not ruled. If $f_s>2$, then $K_Z \cdot \Gamma >0$ because of the discrepancies. So, let us assume $f_s=2$. In particular $s \geq 2$. 

Say that $t=1$, and so $g_t=4$. Then if we contract $B, D,\Gamma$, we have $G_t^2=-1$. But also $F_s^2=-1$, and they intersect, a contradiction with $\tilde{Z}$ not ruled. Therefore we have $t \geq 2$, and with that $g_1=2$.

Note that in all contractions below, we can never have a chain $[1,2,\ldots,2,1]$ by the not ruled hypothesis.

Let us denote by $p:=f_1, q:=g_t$, and $\{k_1, \dots, k_s\}, \{l_1, \dots, l_t\}$ the corresponding discrepancies. We have the following four cases: 

\begin{itemize}
\item If $[f_1, \dots, f_s]$ and $[g_t, \dots, g_1]$ are of type $B$, then  there are  $p-2$ $(-2)$-curves starting with $F_s$. Because of the change of self-intersection of $G_t$ after contracting $B$, $\Gamma$ and the $p-2$ $(-2)$-curves, we have $q \geq p+1$. By Lemma \ref{cotadiscrepancias}, we have  \[ K \cdot \Gamma = -1 - k_s - l_t > -1 + \frac{1}{p} + 1 - \frac{1}{q-1} = \frac{q-p-1}{p(q-1)} \geq 0, \]

\item If $[f_1, \dots, f_s]$ is of type $B$ and $[g_t, \dots, g_1]$ is of type $M$, we have $p-2$ $(-2)$-curves starting with $F_s$, and so $q \geq p+1$ just as before. But in addition we can blow-down $D, G_1,\ldots, G_{t-1}$ which gives the better restriction $q \geq p+2$. Lemma \ref{cotadiscrepancias} gives in this case \[ K \cdot \Gamma = -1 - k_s - l_t > -1 + \frac{1}{p} + 1 - \frac{1}{q-2} = \frac{q-p-2}{p(q-2)} \geq 0 .\]

\item If $[f_1, \dots, f_s]$ is of type $M$ and $[g_t, \dots, g_1]$ is of type $B$, then we have $p-4$ $(-2)$-curves starting with $F_s$, and so $q\geq p-1$. This implies \[ K \cdot \Gamma = -1 - k_s - l_t > -1 + \frac{1}{p-2} + 1 - \frac{1}{q-1} = \frac{q+1-p}{(p-2)(q-1)} \geq 0. \]

\item If both $[f_1, \dots, f_s]$ and $[g_t, \dots, g_1]$ are of type $M$, we obtain $q \geq p-1$ just as done before.  Contracting $D$ and $G_1, \dots, G_{t-1}$, we get $q > p-1$. We also have \[ K \cdot D = -1 + k_1 + l_1 = -\frac{1}{p-2}+\frac{1}{q-2}=\frac{p-q}{(p-2)(q-2)}>0, \]
and so $p>q$. But then $p>q>p-1$ which is not possible.
\end{itemize}
\end{proof}

We now finish the proof of Theorem \ref{2W}.

Let $Z_1:=X_2$. If $K_{Z_1}$ is nef, then we are done. If not, then by Lemma \ref{-2} we must have a mk1A as in Lemma \ref{controlcurva2Wahl}. Using Lemma \ref{mk1Aesflip}, we can now apply the flip and get $Z_2$ that sits in two possible situations: Type(-1) or Type(-2).

We now assume that we have a chain of flips giving only Type(-2), or a chain of Type(-2) followed by one Type(-1).

If only Type(-2), then by Lemma \ref{-2} we have that the new mk1A nbhd can only be as in Lemma \ref{controlcurva2Wahl}, and we continue, or $K$ is nef.

If only Type(-2) and one last Type(-1), then we cannot have a mk1A neighborhood by Lemma \ref{-1}. And so either $K$ is nef, or we have a mk2A nbhd. Then by Lemma \ref{prelemacompartido} we can only have a $\Gamma$ intersecting $G_1,F_1$ (not possible since $K \cdot D >0$), or $F_1, G_t$ or $G_1, F_s$. Note that in both cases we have $t>1$ or $s>1$ since otherwise we are can use Lemma \ref{prelemacompartido} or that $K \cdot D >0$. Therefore we can apply Proposition \ref{propcompartida} to deduce that a mk2A nbhd is impossible, and so $K$ must be nef. This process must end in finitely many steps, so we are done.

\section{Open questions} \label{s6}

\subsection{Topological type of surfaces in a wormhole} Let us start with a couple of examples. Consider a general rational elliptic surface $Z \to \P^1$ with sections and no $(-1)$-curves in their fibers. Hence any section is a $(-1)$-curve. Let $F_E$ and $F_G$ two nodal $I_1$ fibers, and let $S$ be a section of $Z \to \P^1$. We blow-up $s$ times over the node in $F_E$, and $r$ times over the node in $F_G$ to obtain a surface $\tilde X_1$ with the Wahl chains $[F_E,E_1,\ldots,E_{s-1}]=[3+s,2,\ldots,2]$ and $[F_G,G_1,\ldots,G_{r-1}]=[3+r,2,\ldots,2]$. The contraction of both of them produces a W-surface $X_1$ (see \cite[Theorem 4.2]{U16b}), and the general fibers are either Enriques surfaces (if $r=s=1$) or elliptic surfaces of Kodaira dimension $1$. In fact, one can prove that the general fiber is an elliptic fibration over $\P^1$ with $p_g=q=0$ and two multiple fibers of multiplicities $s$ and $r$, and so its fundamental group is $\Z/\text{gcd}(r,s)$. Hence, although these are not degenerations of surfaces of general type, they will be useful to see that wormholes may change the topology of the general fibers.

The curve $S$ defines an extremal P-resolution on $X_1$. Let us consider the chain of curves $E_{s-1},\ldots,E_1,F_E,S,F_G,G_1,\ldots,G_{r-1}$. Their contraction defines the cyclic quotient singularity $(Q \in Y)$ given by $$[2,\ldots,2,2+s,2+r,2,\ldots,2],$$ whose dual continued fraction is $[s+1,\bar{2},\ldots,2,3,2,\ldots,\bar{2},r+1]$, where the numbers of $2$'s are $s-1$ (left) and $r-1$ (right), and we mark with bars the position of the pair which produces the extremal P-resolution indicated above. (The cases $s=1$ or $r=1$ are a bit different as the reader may check.) We want to check whether $(Q \in Y)$ is a wormhole singularity, and so we are looking for another pair. A quick verification shows that $r>3$ or $s>3$ do not work. For the few cases left and up to reordering, the only wormhole singularities are $[4,\bar2,\underline{2},3,\bar2,\underline{3}]$ and $[\underline{4},\bar2,\underline{2},\bar3,2]$, corresponding to the initial extremal P-resolutions (I) $[2,2,6]-1-[5,2]$ and (II) $[2,2,6]-1-[4]$ respectively. We have:  
\bigskip

\textbf{(I).} In this case, the new extremal P-resolution is $[2,2,5,4]-2$. Let $X_2$ be the corresponding W-surface. The curve $G_2$ is now a flipping curve, and after the flip we obtain a W-surface $X_2'$ with extremal P-resolution $[2,2,6]-1-[4]$. Therefore the canonical class now is nef. The general fiber of $X_1$ gives an elliptic surface with fundamental group of order gcd$(4,3)=1$, but the general fiber of $X_2'$ has fundamental group of order gcd$(4,2)=2$. Thus they are not homeomorphic.
\bigskip

\textbf{(II).} In this case, the new extremal P-resolution is $2-[2,5,3]$. Let $X_2$ be the corresponding W-surface. The curve $E_3$ is now a flipping curve, and after the flip we obtain a W-surface $X_2'$ with extremal P-resolution $[2,5]-1-[4]$, and so the canonical class now is nef. The general fiber of $X_1$ gives an elliptic surface with fundamental group of order gcd$(4,2)=2$, but the general fiber of $X_2'$ is simply connected, since gcd$(3,2)=1$. Thus they are not homeomorphic as well.

\bigskip
However, in many cases wormholes produce surfaces with isomorphic fundamental groups. Let us consider a wormhole situation from $X_1$ to $X_2$, where both have two Wahl singularities corresponding to the extremal P-resolutions. Let $d_i$ be the greatest common divisor of the indices of the Wahl singularities in $X_i$. (If there is one or zero singularities, then $d_i=1$.)

\begin{proposition}
If $d_1=d_2$, then the fundamental groups of the general fiber of $X_1$ and $X_2$ are isomorphic.
\label{d1=d2}
\end{proposition}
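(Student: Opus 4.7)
The plan is to argue topologically around the singular point $Q$ via a van Kampen decomposition, reducing the proposition to a local statement about the fundamental group of the Milnor fiber of an extremal P-resolution. Choose a small Milnor neighborhood $N_Q \subset Y$ of $Q$ with boundary the lens space $L := \partial N_Q = L(\Delta,\Omega)$, and set $U := Y \setminus \mathrm{int}\, N_Q$. Since both $X_i$ come equipped with contractions $f_i \colon X_i \to Y$ which are isomorphisms away from $C_i$, the open piece $X_i \setminus f_i^{-1}(\mathrm{int}\, N_Q)$ is canonically identified with $U$ for $i = 1, 2$; in particular $\pi_1(U)$ and the inclusion-induced map $\pi_1(L) \to \pi_1(U)$ do not depend on $i$. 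Because the $\Q$-Gorenstein smoothing is trivial outside $N_i := f_i^{-1}(N_Q)$, the smooth general fiber decomposes topologically as
\[
X_i^{\mathrm{sm}} \;=\; U \cup_L M_i,
\]
where $M_i$ is the Milnor fiber of the local $\Q$-Gorenstein smoothing of $N_i$, equivalently the Milnor fiber of the smoothing of $(Q \in Y)$ induced by the $i$-th extremal P-resolution.

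By van Kampen's theorem this gives
\[
\pi_1(X_i^{\mathrm{sm}}) \;=\; \pi_1(U) *_{\pi_1(L)} \pi_1(M_i),
\]
so the proposition will follow once one shows that the pair $(\pi_1(M_i), \pi_1(L)\to\pi_1(M_i))$ depends on the P-resolution only through the invariant $d_i$. The concrete target is: $\pi_1(M_i)$ is cyclic of order $d_i$ and the inclusion-induced map is the canonical surjection $\Z/\Delta \twoheadrightarrow \Z/d_i$. To attack this, decompose
\[
M_i \;=\; \big(N_i \setminus \{P_1, P_2\}\big) \cup_{L_1} B_{m_1,a_1} \cup_{L_2} B_{m_2,a_2},
\]
where $B_{m_j,a_j}$ is the Milnor fiber of the Wahl singularity $\tfrac{1}{m_j^2}(1, m_j a_j - 1)$, with $\pi_1(B_{m_j,a_j}) = \Z/m_j$ and $\partial B_{m_j,a_j} = L_j = L(m_j^2, m_j a_j - 1)$. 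A Mayer--Vietoris computation on this decomposition, combined with the simple connectivity of the plumbed neighborhood $\widetilde{N}_i$ coming from the minimal resolution, yields the abelianization $H_1(M_i) = \Z/\gcd(m_1, m_2) = \Z/d_i$ and identifies the image of $\pi_1(L)$ with the full group. Granted the cyclic structure of $\pi_1(M_i)$, when $d_1 = d_2 =: d$ both pushouts $\pi_1(U) *_{\Z/\Delta} \Z/d$ agree, giving $\pi_1(X_1^{\mathrm{sm}}) \cong \pi_1(X_2^{\mathrm{sm}})$.

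The main obstacle will be upgrading from the abelian computation $H_1(M_i) = \Z/d_i$ (which is routine Mayer--Vietoris, in the spirit of the abelianization arguments of Lee--Park and its sequels) to the non-abelian statement that $\pi_1(M_i)$ itself is cyclic of order $d_i$. I expect this to follow from an explicit Kirby/handle description of $M_i$ as a rational ball filling of the lens space $L$ (in the spirit of Lisca's classification of symplectic fillings of lens spaces), together with a careful analysis of the meridians of $L$, $L_1$, $L_2$ and their images in $\pi_1(M_i)$. That meridian bookkeeping is the most delicate part, and it is where I expect the bulk of the work to lie.
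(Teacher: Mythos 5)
Your global strategy coincides with the paper's: cut the general fiber along the link $L$ of $(Q\in Y)$ into the (common) complement and the Milnor fiber $M_i$, apply van Kampen, and reduce the proposition to the local claim that $\pi_1(M_i)$ is cyclic of order $d_i$ with $\pi_1(L)\to\pi_1(M_i)$ the canonical surjection. The gap is that you never actually establish this local claim: your Mayer--Vietoris argument only yields $H_1(M_i)\cong\Z/d_i$, and you defer the non-abelian upgrade to an unexecuted Kirby-calculus/Lisca-type analysis of meridians, explicitly flagging it as the place where the bulk of the work would lie. As written, the proof is therefore incomplete exactly at its central point.

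The fix is much cheaper than what you propose. By Looijenga--Wahl \cite[Lemma 5.1]{LW86} (the citation the paper uses), for the Milnor fiber of any smoothing of a normal surface singularity the inclusion of the boundary induces a surjection $\pi_1(L)\twoheadrightarrow\pi_1(M_i)$. Since $\pi_1(L)\cong\Z/\Delta$ is cyclic, $\pi_1(M_i)$ is a cyclic quotient of it, hence abelian, hence equal to your $H_1(M_i)\cong\Z/d_i$; moreover any surjection $\Z/\Delta\to\Z/d_i$ has the same kernel, so the two amalgamated products coincide when $d_1=d_2$ (the paper records this as $\pi_1(X_{i,t})\simeq G/(R,\alpha^{d_i}=1)$ where $\pi_1$ of the common complement is $G/R$). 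With that one standard citation your ``main obstacle'' dissolves and no handle or meridian bookkeeping is needed; the rest of your setup --- the identification of the complements, the gluing along $L$, and the role of $d_i=\gcd(m_1,m_2)$ --- matches the paper's proof.
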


\begin{proof}
Let $f_i^+ \colon (C_i \subset X_i) \to (Q \in Y)$ be the contractions to a wormhole singularity. We are going to use the Seifert--Van-Kampen theorem to compare the fundamental groups of the general fibers $X_{1,t}$ and $X_{2,t}$. Let $L$ be the link of $(Q \in Y)$. Let $M_i$ be the Milnor fiber of the smoothing of $(Q \in Y)$ corresponding to $X_i$ (i.e. the blowing-down deformation of the $\Q$-Goresntein smoothing corresponding to the extremal P-resolution in $X_i$). Then $\pi_1(M_i) \simeq \Z/d_i$, and $\pi_1(L) \simeq \Z/\Delta$ where $\frac{1}{\Delta}(1,\Omega) = (Q \in Y)$. Let $X_i^0$ be the complement of $C_i$. Then $X_1^0=X_2^0=:X$, and we have $$\pi_1(X_{i,t}) \simeq \big(\pi_1(X) \star \pi_1(M_i) \big)/N(\alpha \beta^{-1})$$ where $\alpha$ generates $\pi_1(L)$ in $\pi_1(X)$, $\beta$ generates $\pi_1(L)$ in $\pi_1(M_i)$, and $N(\alpha \beta^{-1})$ is the smallest normal subgroup containing $\alpha \beta^{-1}$. By \cite[Lemma 5.1]{LW86}, we have that the morphism induced by the inclusion $\pi_1(L) \to \pi_1(M_i)$ is onto. Therefore, if $\pi_1(X)=G/R$, where $G$ are generators and $R$ are relations, then $\pi_1(X_{i,t}) \simeq G/(R,\alpha^{d_i}=1)$. The claim follows when $d_1=d_2$.
\end{proof}

\begin{corollary}
If $d_1=d_2=1$, then the general fibers of $X_1$ and $X_2$ have isomorphic fundamental groups and equal to $\pi_1(\tilde X_1)=\pi_1(\tilde X_2)=\pi_1(Y)$. In particular, if in addition $\tilde X_1$ is rational, then wormholes produce simply connected surfaces. 
\end{corollary}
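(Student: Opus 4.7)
The plan is to show that all four groups in the statement are naturally isomorphic to the same quotient $\pi_1(X)/\langle\langle \alpha \rangle\rangle$, where $X = X_i\setminus C_i = Y\setminus\{Q\}$ (the same topological space for $i=1,2$) and $\alpha$ denotes the image in $\pi_1(X)$ of a generator of $\pi_1(L) \simeq \mathbb{Z}/\Delta$, with $L$ the link of $(Q\in Y)$. Two of these identifications are already essentially contained in the proof of Proposition \ref{d1=d2}: under the hypothesis $d_i = 1$ one has $\pi_1(M_i) = \mathbb{Z}/d_i = 1$, so the van Kampen presentation given there collapses to $\pi_1(X_{i,t}) \simeq \pi_1(X)/\langle\langle \alpha \rangle\rangle$. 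This simultaneously yields the isomorphism $\pi_1(X_{1,t}) \simeq \pi_1(X_{2,t})$ for free.

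Next I would verify the identifications with $\pi_1(Y)$ and $\pi_1(\tilde X_i)$ by Seifert--van Kampen. For $Y$: a small neighborhood of $Q$ in $Y$ is a cone on $L$ and is therefore contractible, so gluing it to $X$ along $L$ forces $\alpha$ to become trivial and yields $\pi_1(Y) \simeq \pi_1(X)/\langle\langle \alpha \rangle\rangle$. For $\tilde X_i$: the preimage of $Q$ in $\tilde X_i$ is the union of the strict transform of $C_i$ and the exceptional chains of the minimal resolutions of the (at most two) Wahl singularities sitting on $C_i$; this is a chain of smooth rational curves, and its regular tubular neighborhood $N_i$ deformation retracts onto it. A finite tree of copies of $\mathbb{P}^1$ glued at points is simply connected by iterated van Kampen, so $\pi_1(N_i)=1$. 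Writing $\tilde X_i = X \cup_L N_i$ up to homotopy and applying Seifert--van Kampen once more produces the same quotient $\pi_1(X)/\langle\langle \alpha \rangle\rangle$.

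Combining these identifications gives the chain $\pi_1(X_{1,t}) \simeq \pi_1(X_{2,t}) \simeq \pi_1(Y) \simeq \pi_1(\tilde X_1) \simeq \pi_1(\tilde X_2)$, which is the first assertion. For the ``in particular'' part, observe that $\tilde X_1$ and $\tilde X_2$ both contain the open set $X$, hence are birational; since a smooth projective rational surface over $\mathbb{C}$ is simply connected, rationality of $\tilde X_1$ forces $\pi_1(\tilde X_1) = \pi_1(\tilde X_2) = 1$, and therefore the general fibers of both wormhole surfaces are simply connected. There is no serious obstacle here: the only point that needs a moment's care is the simple connectedness of the plumbing $N_i$, which is immediate from the fact that the exceptional configuration is a chain of $\mathbb{P}^1$'s; everything else is book-keeping of van Kampen already performed in the proof of Proposition \ref{d1=d2}.
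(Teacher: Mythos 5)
Your proposal is correct and follows essentially the same route as the paper: the paper likewise identifies everything with the quotient $\pi_1(X_i^0)/(\alpha=1)$ via Seifert--van Kampen, observes that this coincides with the presentation of $\pi_1(X_{i,t})$ from Proposition \ref{d1=d2} when $d_i=1$, and attributes the identifications with $\pi_1(\tilde X_i)$ and $\pi_1(Y)$ to the rationality (simply connected exceptional trees) of the singularities involved. Your write-up merely spells out the plumbing/van Kampen bookkeeping that the paper leaves implicit.
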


\begin{proof}
Here, by applying the Seifert--Van-Kampen theorem, we have that $\pi_1(X_i)=\pi_1(X_i^0)/(\alpha=1)$, but this is what we just computed for $\pi_1(X_{i,t})$ when $d_i=1$ (alternatively one can use \cite[Theorem 3]{LP07}). The other claim is because we are dealing with rational singularities. 
\end{proof}

Let us consider rational W-surfaces $X_i$ with $d_1=d_2=1$. Let us assume $K^2=1$, and so their general fibers are simply connected Godeaux surfaces. There are plenty of such wormholes in the KSBA compactification of the moduli space of Godeaux surfaces (see e.g. \cite[Fig. 6]{LP07} for the $[2,2,6]-1-[4]$). By Freedman's classification theorem, the general fibers are homeomorphic as oriented 4-manifolds. On the other hand, Miles Reid conjectures that the moduli space of torsion zero Godeaux surfaces is irreducible, and so all of these wormholed surfaces should be diffeomorphic. Very recently, Dias and Rito proved Reid's conjecture for $\Z/2$-Godeaux surfaces in \cite{DR20}, and so any wormhole in their KSBA compactification with $d_1=d_2$ (as in Example \ref{ejenriques}) gives diffeomorphic surfaces.       

\begin{question}
For a wormhole with $d_1=d_2$, are the general fibers always diffeomorphic? homeomorphic?
\end{question}


In fact one can show that $d_1=d_2$ keeps the homology together with the intersection form, and so if they are simply connected, then Freedman's theorem produces an homeomorphism. More on the topology aspects will be part of a sequel work. On the other hand and as we saw above, for the case $d_1 \neq d_2$ we may have non homeomorphic surfaces (although the example was not of general type). In \cite[Figure 5]{DRU20} we have a wormhole defined by $[2,5]-1-[2,6,2,3]$ in $X_1$ ($d_1=1$), whose $\Q$-Gorenstein smoothing is a $\Z/2$-Godeaux surface. Its wormholed surface $X_2$ has extremal P-resolution $[2,3,5,3]-1-[4]$, and so $d_2=2$. If its $\Q$-Gorenstein smoothing has $\pi_1 \neq \Z/2$, then it would be $\Z/4$ by the classification we have for Godeaux surfaces.


We note that for wormholes of general type and with different fundamental groups, we would be crossing distinct components of the 
moduli space. We expect there are many. 



\subsection{What is left to prove the conjecture} In this paper we introduced the wormhole conjecture, and we proved it for many situations under the assumption that the singular surfaces involved were not rational. Hence we divide the final discussion in two parts:

\bigskip
\noindent
\textbf{Nonrational:} Let $X_1$ and $X_2$ be the W-surfaces in a wormhole, both with an extremal P-resolution over a fixed wormhole singularity, and nonsingular out of them. In the next list, we write Wahl-$m$-Wahl for an extremal P-resolution with two Wahl singularities (distinct or equal) and a middle curve whose self-intersection in the minimal resolution is $-m$. If Wahl is dropped, then the point is nonsingular. Using Theorem \ref{calcularc}, and because we already have Theorems \ref{W1W} and \ref{2W}, the list of pairs of extremal P-resolutions where we do not know the validity of the wormhole conjecture is:

\begin{itemize}
    \item[(a)] Wahl-$m$-Wahl and Wahl-$m$-Wahl for $m \geq 2$.
    
    \item[(b)] $m$-Wahl and Wahl-$(m-1)$-Wahl for $m \geq 3$.
    
    \item[(c)] Wahl-$m$ and $m$-Wahl for $m \geq 3$.
\end{itemize}

For the case (c) we will give some combinatorial counterexamples to the wormhole conjecture, although we do not know if they can be realized on a surface.

\begin{example} \label{counterex}
Let us assume the existence of a chain of $\P^1$'s $E_1, \ldots, E_9$ in a nonsingular surface $Z$ with nef minimal model, where  $E_i^2=-e_i$ , and 
$$[e_1, \ldots, e_9] = [5, 2, 2, 2, 8, 2, 2, 2, 5].$$ Assume that there is a $(-1)$-curve $\Gamma$ intersecting $E_1$ twice and transversally, and disjoint from the rest. This $(-1)$-curve does not produce any contradiction with the minimal model of $Z$. The wormhole singularity $[5, 2, 2, 2, 10, 2, 2, 2, 5]$ admits two obvious extremal P-resolutions: $[5, 2, 2, 2, 10, 2, 2, 2]-5$ in $X_1$ and $5-[2, 2, 2, 10, 2, 2, 2, 5]$ in $X_2$, so we are in case (c). The curve $\Gamma$ is positive for $K_{X_1}$, and it is not only negative for $K_{X_2}$ but it induces a divisorial contraction on the deformation of $X_2$. 
In fact $[n+2, \underbrace{2, \dots, 2}_{n}, n+5, \underbrace{2, \dots, 2}_{n}, n+2]$ with a $(-1)$-curve intersecting only the first curve with multiplicity $n-1$ gives infinitely many bad situations. Are any of these counterexamples realizable?
\end{example}

\noindent
\textbf{Rational:} Here we do not have a feasible strategy to prove the conjecture. But we have many examples verifying it for the invariants $p_g=q=0$ and $K^2=1,2,3,4$. These examples are constructed as in \cite{LP07} and they have two singularities, they will be part of some future work.   

\bigskip
We finish the paper with another open question. Note that a WW singularity (i.e. it admits at least one pair of indices to be a zero continued fraction) has complete freedom on the values of $\delta$. But this freedom is lost for wormhole singularities.

\begin{question}
What are the possible values for $\delta$ in a wormhole singularity?
\end{question}

For $\Delta \leq 450$ we have only $64$ wormhole singularities $\frac{1}{\Delta}(1,\Omega)$, and the values of $\delta$ are $2, 5, 10, 13, 17, 26, 30, 37, 50$. These values appear with multiplicities $31, 18, 4, 3, 3, 1, 2, 1, 1$ respectively. If we consider wormholes singularities whose Hirzebruch--Jung continued fraction has at most $18$ entries and their extremal P-resolutions requires no blow-ups, then the values of $\delta$ are: \begin{gather*}
2, 5, 10, 13, 17, 26, 30, 34, 37, 50, 53, 58, 65, 68, 82, 89, 101, 122, 130, \\
145, 170, 178, 185, 197, 219, 222, 226, 233, 257, 290, 317, 325, 327, 338, \\
350, 457, 466, 520, 578, 610, 738, 853, 964, 986, 997, 1010, 1220, 1237, \\
1342, 1515, 1597, 1740, 1970, 2018, 2210, 2487, 2758, 3005, 3194, 3390, \\
3505, 3567, 4112, 4181, 4930, 5722, 5725, 5850, 6878, 9282.
\end{gather*}

We also have that 
\begin{gather*}
\delta = 2, 5, 10, 13, 17, 26, 30, 34, 37, 50, 53, 58, 65, 68, 82, 89, 101, 122, 130, \\
145, 178, 185, 219, 222, 233, 317, 327, 338, 350, 457, 466, 520, 578, 610, \\
738, 853, 964, 986, 997, 1010, 1220, 1237, 1342, 1515, 1597, 1740, 1970, \\
2018, 2210, 2487, 2758, 3005, 3194, 3390, 3505, 3567, 4112, 4181, 4930, \\
5722, 5725, 5850, 6305, 6878, 7298, 8020, 9282, 10670, 10946, 11482, \\
12190, 13669, 13848, 15049, 15650, 17602, 19710, 20917, 24418, 27030, \\
28657, 29822, 39338, 75025
\end{gather*}
are all the values of $\delta$ for wormholes singularities whose Hirzebruch-Jung continued fraction has at most $25$ entries, and with one of their extremal P-resolutions being of type $m$-Wahl with $m=2$. 

The following infinite family has the value $\delta=2$: 
$$ \frac{\Delta}{\Omega}= 2-[\underbrace{2, \dots, 2}_{k-2}, 5, k] = [\underbrace{2, \dots, 2}_{k-1}, k+3] - 1 - [\underbrace{2, \dots, 2}_{k-3}, k+1]. $$
One can compute
$$ [\underbrace{2, \dots, 2}_{k-2}, 5, k]=\frac{(2k-1)^2}{(2k-1)(2k-3)-1}, $$
which gives $\Delta=4k^2, \Omega=(2k-1)^2, \delta=2$. Actually, the case $\delta=2$ can be completely classified through the use of triangulations of polygons, where one changes one diagonal in a ``corner quadrilateral" by the other diagonal.

On the other hand, not every natural number appears as the $\delta$ of a wormhole singularity. For instance $\delta=3$ is not possible. Indeed, say that $\frac{\Delta}{\Delta-\Omega}=[a_1, \dots, \overline{a}_\alpha, \dots, \overline{a}_\beta, \dots, a_r]$ has $\delta=3$. We may assume also that the $a_0>1$. Thus, $a_\alpha=a_\beta=2, \frac{\delta}{\varepsilon}=[a_{\alpha+1}, \dots, a_{\beta-1}]$, and $\frac{\delta}{\delta-\varepsilon}=[a_{\alpha-1}, \dots, a_1, a_0, a_r, \dots, a_{\beta+1}]$ from the case (A). Now, we have $\varepsilon=1$ or $2$, so one of those continued fractions is $\frac{3}{2}=[2, 2]$. We get that the associated triangulation contains $\{\overline{2}, 2, 2, \overline{2}\}$, which is clearly a contradiction.

A better understanding of the wormhole phenomenon on singularities is wanted, to potentially solve the wormhole conjecture and to show topological implications.


\end{document}